\theoremstyle{plain}
\newtheorem{theorem}{Théorème}
\newtheorem{corollary}{Corollaire}
\newtheorem{proposition}{Proposition}
\newtheorem{notation}{Notation}
\theoremstyle{definition}
\newtheorem{definition}{Définition}
\theoremstyle{remark}
\newtheorem{remark}{Remarque}
\date{}
\title{ Représentations de réflexion de groupes de Coxeter\\Cinquième partie: La représentation $R$ est réductible. Cas particulier du rang $3$}
\author{François ZARA
}
\begin{document}
\maketitle
\begin{abstract}
Dans cette cinquième partie, (avec les notations des parties précédentes) on fait les hypothèses suivantes: $(W,S)$ est un système de Coxeter irréductible, $2$-sphérique et $S$ est de cardinal $3$. Soit $R:W\to GL(M)$ une représentation de réflexion réductible de $W$. On pose $G:= Im R$. Chaque sous-espace de $M$ $(\neq M)$ fixé par $G$ est contenu dans $C_{M}(G)$. On pose $M':=M/C_{M}(G)$ et $N(G):=\{g|g\in G, g \,\text{fixe}\, M'\}$. On appelle $N(G)$ le sous-groupe des translations de $G$. Un des buts de cette partie est d'étudier $M'$ et $N(G)$. En particulier, on montre que $N(G)$ a une structure de $\mathcal{O}G-module$.
\end{abstract}
\begin{otherlanguage}{english}
\begin{abstract}
In this fith part, (with the notations of the preceding parts) we make the following hypothesis:  $(W,S)$ is a  Coxeter system, irreducible, $2$-spherical and $S$ is of cardinality $3$. Let $R:W\to GL(M)$ be a reducible reflection representation of $W$. Let $G:= Im\,R$. Each sub-space of $M$ $(\neq M)$ stabilize by $G$ is contained in $C_{M}(G)$. Let $M':=M/C_{M}(G)$ and $N(G):=\{g|g\in G,g\, \text{acts trivially on}\,M'\}$. We call $N(G)$ the translation sub-group of $G$. One of the goals of this part is to study $M'$ and $N(G)$. In particular it is shown that $N(G)$ is an $\mathcal{O}G-module$.
\end{abstract}
\end{otherlanguage}
\let\thefootnote\relax\footnote{Mots clés et phrases: groupes de Coxeter, groupes de réflexion.Représentation de réflexion réductible.}
\let\thefootnote\relax\footnote{Mathematics Subject Classification. 20F55,22E40,51F15,33C45.}\section{Introduction}
Nous gardons les hypothèses et notations des parties précédentes mais en plus nous supposons que le rang est $3$. Avec cette hypothèse supplémentaire de nombreuses simplifications se produisent. 

Dans toute la suite $(W,S)$ est un système de Coxeter de rang $3$ avec $W=W(p,q,r)$ et $S=\{s_{1},s_{2},s_{3}\}$, et la représentation $R$ est réductible (sauf mention du contraire). On a posé $G:=Im R$. Si $\mathcal{O}'(K)$ est le sous-anneau des entiers algébriques de $K$ engendré par les éléments de $\mathcal{P}(G)$ alors on montre que $N(G)$ est un $\mathcal{O}'(K)(G)$-module.\\
Si $2|pqr$, on montre qu'une condition nécéssaire pour que la suite $(\star\star)$ soit scindée est que l'on puisse trouver des éléments $\lambda_{i}\in K$ tels que la condition $(\mathcal{E})$ soit satisfaite:
\[
(\mathcal{E}) \quad (4-\gamma)\lambda_{1}+(l+2)\lambda_{2}+(m+2)\lambda_{3}=-1.
\]
On montre ensuite que l'on peut étendre $G$ en ajoutant une réflexion bien choisie à l'aide de certains éléments de $\mathcal{Z}'$.
\section{Des relations entre les paramètres.}
On pose, si $\{i,j,k\}=\{1,2,3\}$, $<b_{k}>=H(s_{i})\bigcap H(s_{j})$. Alors, à facteur de proportionnalité près, nous obtenons:
\begin{eqnarray}
b_{1} & = & (4-\gamma)a_{1}+(l+2)a_{2}+(m+2)a_{3} \\
b_{2} & = & (2\alpha+\beta m)a_{1}+(4-\beta)a_{2}+(\alpha+2m)a_{3}\\
b_{3} & = & (\alpha l+2\beta)a_{1}+(\beta+2l)a_{3}
\end{eqnarray}
Dans la suite on pose $b=b_{1}$.

On a $\det(b_{1},b_{2},b_{3})=\Delta(G)^{2}$, donc si $\Delta(G)\neq 0$, $(b_{1},b_{2},b_{3})$ est une base de $M$ et si $\Delta(G)=0$ on a $<b_{1}>=<b_{2}>=<b_{3}>=C_{M}(G)$.

Si $x=\sum_{1}^{3}\lambda_{i}a_{i}$ est un élément de $M-\{0\}$, on a:
\begin{eqnarray*}
s_{1}(x) & = & x-(2\lambda_{1}-\alpha\lambda_{2}-\beta\lambda_{3})a_{1}\\
s_{2}(x) & = & x-(-\lambda_{1}+2\lambda_{2}-l\lambda_{3})a_{2}\\
s_{3}(x) & = & x-(-\lambda_{1}-m\lambda_{2}+2\lambda_{3})a_{3}
\end{eqnarray*}
donc $x \in C_{M}(G)$ si et seulement si le système linéaire homogène $(S)$
\begin{eqnarray*}
2X_{1}-\alpha X_{2}-\beta X_{3} & = & 0\\
-X_{1}+2X_{2}-lX_{3} & = & 0\\
-X_{1}-mX_{2}+2X_{3} & = & 0
\end{eqnarray*}
a $(\lambda_{1},\lambda_{2},\lambda_{3})$ comme solution. Le déterminant de $(S)$ est 
\[
\Delta=8-2\alpha-2\beta-2\gamma-(\alpha l+\beta m).
\]
Il en résulte que si $\Delta \neq 0$ la représentation $R(\alpha,\beta,\gamma;l)$ est irréductible et si $\Delta=0$, on a $<b_{1}>=<b_{2}>=<b_{3}>$ et le système $(S)$ est de rang $1$.

Soit \[T:=\begin{pmatrix}
4-\gamma & 2\alpha+\beta m & 2\beta+\alpha m\\
l+2 & 4-\beta & \beta+2l\\
m+2 & \alpha+2m & 4-\alpha
\end{pmatrix}.\]
- Si $\Delta\neq 0$ on a:
\[
T^{-1}=\frac{1}{\Delta}\begin{pmatrix}
2 & -\alpha & -\beta\\
-1 & 2 & -l\\
-1 & -m & 2
\end{pmatrix}
=\frac{1}{\Delta}C(R(\alpha,\beta,\gamma;l))
\]
et un calcul simple montre que
\[
s_{i}(b_{i})=b_{i}-\Delta a_{i}\quad (1\leqslant i \leqslant 3)
\]
et 
\begin{eqnarray*}
\Delta a_{1} & = & 2b_{1}-b_{2}-b_{3}\\
\Delta a_{2} & = & -\alpha b_{1}+2b_{2}-mb_{3}\\
\Delta a_{3} & = & -\beta b_{1}-lb_{2}+2b_{3}
\end{eqnarray*}
- Si $\Delta=0$, tous les mineurs d'ordre $2$ de $T$ sont nuls. Nous en déduisons les neuf relations $(\mathcal{T})$:
\begin{eqnarray*}
(4-\alpha)(2\alpha+\beta m) & = & (\alpha+2m)(2\beta+\alpha l)\\
(4-\beta)(2\beta+\alpha m) & = & (\beta+2l)(2\alpha+\beta m)\\
(4-\alpha)(4-\beta) & = & (\beta+2l)(\alpha+2m)\\
(4-\alpha)(l+2) & = & (m+2)(\beta+2l)\\
(4-\gamma)(\beta+2l) & = & (l+2)(2\beta+\alpha l)\\
(4-\alpha)(4-\gamma) & = & (m+2)(2\beta+\alpha l)\\
(4-\beta)(m+2) & = & (l+2)(\alpha+2m)\\
(4-\gamma)(\alpha+2m) & = & (m+2)(2\alpha+\beta m)\\
(4-\beta)(4-\gamma) & = & (l+2)(2\alpha+\beta m)
\end{eqnarray*}
et chacune de ces relations équivaut à $\Delta=0$.

Comme $0<\alpha,\beta,\gamma<4$, nous voyons que $l+2\neq 0 \neq m+2$, $\alpha+2m\neq 0 \neq \beta+2l$ et $2\alpha+\beta m \neq 0 \neq 2\beta+\alpha l$.
\section{Etude du groupe $N(G)$.}
Soit $W(p,q,r)$ un groupe de Coxeter de rang $3$ et soit $R$ une de ses représentations de réflexion. On appelle $\mathcal{P}(G)$ son système de paramètres: $\mathcal{P}(G)=\mathcal{P}(\alpha,\beta,\gamma;\alpha l,\beta m)$ et on pose : $G=Im R$.

On a déjà vu que $R$ est réductible si et seulement si 
\[\Delta = 8-2\alpha-2\beta-2\gamma -(\alpha l+\beta m)=0\] \textbf{ce que l'on suppose dans toute la suite}. On en déduit que dans ce cas, $\alpha l$ et $\beta m$ sont les racines  du polynôme $Q(X)\in K_{0}(X)$:
\[
Q(X):=X^{2}-2(4-\alpha-\beta-\gamma)X+\alpha\beta\gamma.
\]
donc, en posant $\theta:=-4+\alpha+\beta+\gamma+\alpha l$:
\[
\{\alpha l, \beta m\}=\{4-\alpha-\beta-\gamma+\sqrt{\theta},4-\alpha-\beta-\gamma-\sqrt{\theta}\}
\]
Il en résulte que $K$ est un corps de décomposition de $Q(X)$.

Nous avons déjà étudié dans la partie 3 le cas où $Q(X)$ possède une racine double: on obtient les groupes diédraux affines.\\
On suppose maintenant que $Q(X)$ possède deux racines distinctes. Soit $\sigma$ un générateur du groupe de Galois $\mathcal{G}(K/K_{0})$. Alors $\sigma(\alpha l)=\beta m$ et on obtient la représentation conjuguée en appliquant $\sigma$.

Nous supposons maintenant que $N(G) (=N)$ est non trivial. Nous avons déjà vu dans la partie  4 que $N$ est un groupe commutatif sans torsion. Nous allons ici préciser sa structure.

Nous avons la base $\mathcal{B}=(b,a_{2},a_{3})$ de $M$. En effet, si $\mathcal{B}$ est un système lié on a $b\in<a_{2},a_{3}>$, donc $\gamma=4$ ce qui n'est pas.

Si $\zeta\in N$, la matrice de $\zeta$ dans la base $\mathcal{B}$ est de la forme:
\[
\zeta=
\begin{pmatrix}
1 & \lambda & \mu\\
0 & 1 & 0\\
0 & 0 & 1
\end{pmatrix}
\]
où $\lambda,\mu$ sont dans $K$. Dans ces conditions nous écrirons $\zeta:=(\lambda,\mu)$: $\zeta$ peut être considéré comme un élément de l'espace vectoriel $K^{2}$.

Si $\zeta_{i}=(\lambda_{i},\mu_{i})$ $(i=(1,2))$ sont dans $N$, alors $\zeta_{1}\zeta_{2}=(\lambda_{1}+\lambda_{2},\mu_{1}+\mu_{2})$, donc $N$ est isomorphe à un sous-groupe de $K^{2}$, en particulier $N$ est sans torsion. Comme $N$ est un sous-groupe normal de $G$, celui-ci opère sur lui par automorphismes intérieurs:\\
Soit $h\in G$ et $\zeta=(\lambda,\mu)\in N$. Nous définissons $h.\zeta$ par $h.\zeta=(\lambda',\mu')$ si\\ $h\zeta h^{-1}=\begin{pmatrix}
1 & \lambda' & \mu'\\
0 & 1 & 0\\
0 & 0 & 1
\end{pmatrix}
.$
\begin{notation}
On appelle $\mathcal{O}'(K)$ le sous-anneau de $\mathcal{O}(K)$ engendré par $\alpha,\beta,\gamma,\alpha l,\beta m$. C'est un sous-anneau de l'anneau des entiers algébriques de $K$.
\end{notation}
On peut remarquer que si $\alpha$ et $\beta$ sont inversibles, alors $\mathcal{O}'(K)=\mathcal{O}(K)$.
\begin{theorem}
$N$ est un $ \mathcal{O}'(K)$-module et aussi $N$ est stable par multiplication par $\theta$.
\end{theorem}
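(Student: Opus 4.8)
The plan is to reduce the whole statement to showing that a few explicit scalar matrices lie in the ring of operators induced on $N$ by conjugation. Write $C_M(G)=\langle b\rangle$ and $M'=M/\langle b\rangle$. Sending $\zeta=(\lambda,\mu)\in N$ to the linear form $\varphi_\zeta$ on $M'$ defined by $\zeta(v)-v=\varphi_\zeta(\bar v)\,b$ identifies $N$ with a subgroup of $(M')^{*}\cong K^{2}$, the coordinates of $\varphi_\zeta$ in the basis dual to $(\bar a_2,\bar a_3)$ being precisely $(\lambda,\mu)$. Since $b$ is fixed by $G$, a short computation gives $\varphi_{h\zeta h^{-1}}=\varphi_\zeta\circ\bar h^{-1}$, so the map $\zeta\mapsto h.\zeta$ acts on $(\lambda,\mu)$ through the contragredient of the action of $h$ on $M'$. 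Letting $u_i\in GL_2(K)$ denote the matrix of $\zeta\mapsto s_i.\zeta$ in the coordinates $(\lambda,\mu)$, the group $N$ becomes a module over $A:=\mathbb{Z}[u_1,u_2,u_3]\subseteq M_2(K)$. It therefore suffices to prove that $\alpha I,\beta I,\gamma I$ and $\theta I$ all lie in $A$: then $\alpha l\,I=\theta I+(4-\alpha-\beta-\gamma)I$ and $\beta m\,I=(4-\alpha-\beta-\gamma)I-\theta I$ lie in $A$ too, whence $\mathcal{O}'(K)\,I\subseteq A$ and both conclusions follow.

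Next I would compute the $u_i$. Using $s_i(b)=b$, the action of $s_i$ on the $a_j$, and the relation $(4-\gamma)\bar a_1=-(l+2)\bar a_2-(m+2)\bar a_3$ in $M'$ coming from $b=(4-\gamma)a_1+(l+2)a_2+(m+2)a_3$, one finds
\[
u_2=\begin{pmatrix}-1&0\\ l&1\end{pmatrix},\qquad
u_3=\begin{pmatrix}1&m\\ 0&-1\end{pmatrix},\qquad
u_1=\begin{pmatrix}1-\dfrac{\alpha(l+2)}{4-\gamma}&-\dfrac{\alpha(m+2)}{4-\gamma}\\[6pt]-\dfrac{\beta(l+2)}{4-\gamma}&1-\dfrac{\beta(m+2)}{4-\gamma}\end{pmatrix}.
\]
The structural point, verified with $\gamma=lm$ and $\alpha l+\beta m=2(4-\alpha-\beta-\gamma)$ (both equivalent to $\Delta=0$), is that each $u_i$ has trace $0$ and determinant $-1$. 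For trace-zero $X,Y$ in $M_2(K)$ one has the elementary identity $XY+YX=\operatorname{tr}(XY)\,I$; applying it and computing $\operatorname{tr}(u_1u_2)=\alpha-2$, $\operatorname{tr}(u_1u_3)=\beta-2$, $\operatorname{tr}(u_2u_3)=\gamma-2$ puts $(\alpha-2)I,(\beta-2)I,(\gamma-2)I$ in $A$, hence $\alpha I,\beta I,\gamma I\in A$.

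The heart of the argument is the production of $\theta$, and this is the step I expect to be the main obstacle. Set $B:=u_1u_2u_3\in A$ and compute its trace. After expanding and substituting $\gamma=lm$, $\alpha l=(4-\alpha-\beta-\gamma)+\theta$ and $\beta m=(4-\alpha-\beta-\gamma)-\theta$, the $K_0$-part of the numerator factors as $(2-\gamma)(4-\gamma)$, the denominator $4-\gamma$ cancels exactly, and one is left with the clean value $\operatorname{tr}(B)=-\theta$. Since $\det B=(-1)^{3}=-1$, Cayley--Hamilton gives $\operatorname{tr}(B)\,B=B^{2}-I$; multiplying on the right by $B^{-1}=u_3u_2u_1\in A$ (each $u_i$ being an involution) yields $\operatorname{tr}(B)\,I=(B^{2}-I)\,u_3u_2u_1\in A$, that is $\theta I\in A$.

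Combining the two previous paragraphs gives $\alpha l\,I,\beta m\,I\in A$ and $\mathcal{O}'(K)\,I\subseteq A$ together with $\theta I\in A$; as $N$ is an $A$-module, it is an $\mathcal{O}'(K)$-module stable under multiplication by $\theta$. The difficulty is concentrated entirely in the trace computation for $B$: one must check that the non-integral $1/(4-\gamma)$ contributions carried by $u_1$ cancel to leave exactly $-\theta$, which is where the reducibility hypothesis $\Delta=0$ is indispensable. Once that cancellation is established, the passage from the scalar $\operatorname{tr}(B)$ to the central operator $\theta I$ via Cayley--Hamilton and the invertibility of $B$ is formal, and the anticommutator identity disposes of $\alpha,\beta,\gamma$ with no effort.
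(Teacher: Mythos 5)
Your argument is correct; I checked the three computations it rests on. Each $u_i$ is an involution of determinant $-1$, the vanishing of $\operatorname{tr}(u_1)$ being precisely the relation $\alpha(l+2)+\beta(m+2)=2(4-\gamma)$, i.e.\ $\Delta=0$; the pair traces are $\operatorname{tr}(u_1u_2)=\alpha-2$, $\operatorname{tr}(u_1u_3)=\beta-2$, $\operatorname{tr}(u_2u_3)=\gamma-2$ (this last using $lm=\gamma$); and for $B=u_1u_2u_3$ the numerator over $4-\gamma$ does factor as $(4-\gamma)\bigl((2-\gamma)-\theta\bigr)$ after substituting $\alpha l=(4-\alpha-\beta-\gamma)+\theta$ and $\beta m=(4-\alpha-\beta-\gamma)-\theta$, so $\operatorname{tr}(B)=-\theta$ as you claim. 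Your route is genuinely different from the paper's only at the last generator, but it is the hardest one. The paper works with the same identification $N\hookrightarrow K^{2}$ and the same conjugation action, encoded in the commutator operators $\psi_i=[s_i,\cdot]=u_i-\mathrm{Id}$; its statement that $(\psi_3-\psi_2)^{2}$ is the homothety of ratio $4-\gamma$ is literally your anticommutator identity, since $(u_3-u_2)^{2}=2\,\mathrm{Id}-(u_2u_3+u_3u_2)$, so up to packaging the treatment of $\alpha$, $\beta$, $\gamma$ is the same. For the remaining generator the paper expands $(\alpha l+2\beta)\zeta$ in the basis $(c_1,c_2)$ and recognizes each of the four pieces $l\lambda c_1$, $2\mu c_1$, $-\beta\lambda c_2$, $\alpha\mu c_2$ as the value on $\zeta$ of an explicit iterated commutator $\psi_{ij}$ or $\psi_{ijk}$, obtaining $\alpha l\,\zeta\in N$ first and $\theta\zeta\in N$ as a corollary; you instead produce $\theta\,\mathrm{Id}=u_3u_2u_1-u_1u_2u_3$ directly from the $2\times2$ Cayley--Hamilton relation $\operatorname{tr}(B)\,\mathrm{Id}=B-B^{-1}$ and recover $\alpha l$ and $\beta m$ afterwards. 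Your version is shorter and more structural (it is the standard trace identity for a product of three involutions of determinant $-1$, and it isolates exactly where $\Delta=0$ enters), at the cost of one genuine trace computation; the paper's version requires the full table of $\psi_{ij}$ and $\psi_{ijk}$ formulas, but that table is not wasted, since it is reused immediately afterwards to define the reflection action of $G$ on $K^{2}$ in Proposition 1.
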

\begin{proof}
On montre que si $\zeta\in N$ alors $\alpha\zeta,\beta\zeta,\gamma\zeta,\alpha l\zeta$ sont dans $N$. Soit $\zeta=(\lambda,\mu)\in N$. Un calcul facile montre que:
\[
\zeta(b)=b,\zeta(a_{1})=a_{1}+\omega(\zeta)b,\zeta(a_{2})=a_{2}+\lambda b,\zeta(a_{3})=a_{3}+\mu b
\]
où $\omega(\zeta)=-\frac{\lambda(l+2)+\mu(m+2)}{4-\gamma}$.\\
Nous obtenons:
\[
s_{1}.\zeta=\zeta+\omega(\zeta)(\alpha,\beta),s_{2}.\zeta=\zeta+\lambda(-2,l),s_{3}.\zeta=\zeta+\mu(m,-2).
\]
Nous posons: $c_{1}:=(\alpha,\beta)$, $c_{2}:=(-2,l)$ et $c_{3}:=(m,-2)$. Nous vérifions facilement, en utilisant les relations $\mathcal{T}$, que si $i\neq j$, $(c_{i},c_{j})$ est une base de $K^{2}$.\\
Nous définissons maintenant des applications linéaires de $K^{2}$ dans lui même. On pose $\psi_{i}(\zeta):=[s_{i},\zeta]$ $(1\leqslant i \leqslant3)$, $\psi_{ij}(\zeta):=[s_{i},\psi_{j}(\zeta)]$ $(1\leqslant i \neq j \leqslant3)$ et $\psi_{ijk}(\zeta):=[s_{i},\psi_{jk}(\zeta)]$ $(1\leqslant i\neq j\neq k\leqslant3)$ si $\zeta=(\lambda,\mu)\in K^{2}$.\\
Nous pouvons remarquer que les applications $\psi_{i}$, $\psi_{ij}$, et $\psi_{ijk}$ stabilisent $N$.

Nous avons les formules dont la vérification n'offre pas de difficultés:
\[
\psi_{1}(\zeta)=\omega(\zeta)c_{1},\psi_{2}(\zeta)=\lambda c_{2},\psi_{3}(\zeta)=\mu c_{3};
\]
\begin{multline*}
\psi_{12}(\zeta)=\lambda c_{1},\psi_{13}(\zeta)=\mu c_{1},\psi_{21}(\zeta)=\omega(\zeta)\alpha c_{2},\psi_{23}(\zeta)=\mu mc_{2},\\ \psi_{31}(\zeta)=\omega(\zeta)\beta c_{3},\psi_{32}(\zeta)=\lambda lc_{3};
\end{multline*}
\begin{multline*}
\psi_{123}(\zeta)=\mu mc_{1},\psi_{132}(\zeta)=\lambda lc_{1},\psi_{213}(\zeta)=\mu \alpha c_{2},\psi_{231}(\zeta)=\omega(\zeta)\beta mc_{2}\\
\psi_{312}(\zeta)=\lambda\beta c_{3},\psi_{321}(\zeta)=\omega(\zeta)\alpha lc_{3}.
\end{multline*}
Nous avons $(\psi_{3}-\psi_{2})(\zeta)=-\lambda c_{2}+\mu c_{3}$ donc $(\psi_{3}-\psi_{2})(c_{2})=2c_{2}+lc_{3}$ et $(\psi_{3}-\psi_{2})(c_{3})=-mc_{2}-2c_{3}$. Un calcul simple montre que $(\psi_{3}-\psi_{2})^{2}(c_{i})=(4-\gamma)c_{i}$ $(i\in\{2,3\})$. Comme $(c_{2},c_{3})$ est un base de $K^{2}$, nous voyons que $(\psi_{3}-\psi_{2})^{2}$ est l'homothétie de rapport $4-\gamma$ sur $K^{2}$. En particulier si $\zeta\in N$, alors $(\psi_{3}-\psi_{2})^{2}(\zeta)=(4-\gamma)\zeta\in N$. Comme $4\zeta\in N$ nous obtenons $\gamma\zeta\in N$.\\
Nous voyons de même que $(\psi_{1}-\psi_{3})^{2}$ (resp. $(\psi_{2}-\psi_{1})^{2}$) est l'homothétie de rapport $4-\beta$ (resp. $4-\alpha$), d'où comme ci-dessus $\beta\zeta$ et $\alpha\zeta$ sont dans $N$ si $\zeta$ est dans $N$.\\
Comme $(c_{1},c_{2})$ est une base de $K^{2}$, nous pouvons écrire:
\[
\zeta=(\lambda,\mu)=\Big(\frac{l\lambda+2\mu}{\alpha l+2\beta}\Big)c_{1}+\Big(\frac{-\beta\lambda+\alpha\mu}{\alpha l+2\beta}\Big)c_{2}
\]
d'où $(\alpha l+2\beta)\zeta=(l\lambda+2\mu)c_{1}+(-\beta\lambda+\alpha\mu)c_{2}$. Nous montrons que le membre de droite de cette égalité est dans $N$ si $\zeta$ est dans $N$.
\begin{itemize}
\item Nous avons $\psi_{132}(\zeta)=l\lambda c_{1}$, donc $l\lambda c_{1}\in N$;
\item nous avons $\psi_{13}(\zeta)=\mu c_{1}$, donc $2\mu c_{1}\in N$;
\item nous avons $\psi_{1}(\zeta)=\lambda c_{2}$, donc $ -\beta\lambda c_{2}\in N$;
\item nous avons $\psi_{213}(\zeta)=\alpha\mu c_{2}$, donc $\alpha\mu c_{2}\in N$.
\end{itemize}
Il en résulte que $(\alpha l+2\beta)\zeta\in N$. Comme $2\beta\zeta\in N$, nous obtenons $\alpha l\zeta\in N$, donc $N$ est un $\mathcal{O}'(K)$-module.\\
Comme $\alpha l=4-\alpha-\beta-\gamma+\theta$, nous voyons que $\theta\zeta\in N$.
\end{proof}
Les formules données dans la démonstration de théorème 1: si $\zeta=(\lambda,\mu)\in K^{2}$, $s_{1}.\zeta=\zeta+\omega(\zeta)c_{1}$, $s_{2}.\zeta=\zeta+\lambda c_{2}$, $s_{3}.\zeta=\zeta+\mu c_{3}$ définissent une action de $G$ sur $K^{2}$ et l'on voit par un calcul simple que l'on a les relations:
\[
\begin{array}{c|ccc}
& s_{1}.c_{1} & = & -c_{1}\\
& s_{1}.c_{2} & = & c_{2}+c_{1}\\
& s_{1}.c_{3} & = & c_{3}+c_{1}
\end{array}
,
\begin{array}{c|ccc}
& s_{2}.c_{1} & = & c_{1}+\alpha c_{2}\\
& s_{2}.c_{2} & = & -c_{2}\\
& s_{2}.c_{3} & = & c_{3}+mc_{2}
\end{array}
,
\begin{array}{c|ccc}
& s_{3}.c_{1} & = & c_{1}+\beta c_{3}\\
& s_{3}.c_{2} & = & c_{2}+lc_{3}\\
& s_{3}.c_{3} & = & -c_{3}
\end{array}
\]
et aussi $(4-\gamma)c_{1}+(2\alpha+\beta m)c_{2}+(2\beta+\alpha l)c_{3}=0$, et les relations équivalentes: $(m+2)c_{1}+(\alpha+2m)c_{2}+(4-\alpha)c_{3}=0$ et $(l+2)c_{1}+(4-\beta)c_{2}+(\beta+2l)c_{3}=0$ qui résultent immédiatement des relations $(\mathcal{T})$.
\begin{proposition}
L'opération précédente de $G$ sur $K^{2}$ est une représentation de réflexion de $W(p,q,r)$. De plus:\\
1) -Si $Q(X)$ admet une racine double, cette opération est isomorphe à l'action de $G$ sur $M'$.\\
    -Si $Q(X)$ admet deux racines distinctes $e$ et $f$, alors on a la représentation $R(\alpha,\beta,\gamma;\alpha^{-1}e)$ sur $M'$ et la représentation $R(\alpha,\beta,\gamma;\alpha^{-1}f)$ sur $K^{2}$.\\
    Dans tous les cas la représentation de $G$ sur $K^{2}$ est irréductible.\\
2) Si $Q(X)$ admet deux racines distinctes dans $K-K_{0}$, les représentations $R(\alpha,\beta,\gamma;\alpha^{-1}e)$ et $R(\alpha,\beta,\gamma;\alpha^{-1}f)$ sont conjuguées.
\end{proposition}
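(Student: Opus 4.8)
The plan is to read everything off the table of the $s_i.c_j$ that precedes the statement. First I would record the matrices of $s_1,s_2,s_3$ in the spanning family $(c_1,c_2,c_3)$, namely the relations $s_i.c_i=-c_i$ together with the off-diagonal formulas, and observe that each $s_i$ is an involution of determinant $-1$ whose $(-1)$-eigenline is $\langle c_i\rangle$; since the formulas are already known to define an action of $G$ (hence, through $R:W\to G$, of $W$) on $K^2$, this shows at once that the operation is a reflection representation of $W(p,q,r)$. Writing $s_i.c_j=c_j-C'_{ij}c_i$, the coefficient matrix is $C'=\begin{pmatrix}2&-1&-1\\ -\alpha&2&-m\\ -\beta&-l&2\end{pmatrix}$, which is exactly the transpose of the Cartan matrix $C$ of $R$; in particular the three dihedral products $C'_{ij}C'_{ji}$ equal $\alpha,\beta,\gamma$, so the new representation has the same parameters $(\alpha,\beta,\gamma)$ and is a representation of the same $W(p,q,r)$.

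To identify the last parameter I would normalize $C'$ by the rescaling $c_2\mapsto\alpha c_2$, $c_3\mapsto\beta c_3$, which leaves the representation unchanged and multiplies $C'_{ij}$ by $d_j/d_i$. This brings $C'$ to the standard shape with $(2,1)$ and $(3,1)$ entries equal to $-1$ and with $(2,3)$-entry $-\alpha^{-1}\beta m$, identifying the $K^2$-representation with $R(\alpha,\beta,\gamma;\alpha^{-1}\beta m)$. On the other hand $M'=M/C_M(G)$ carries the induced quotient of $R$, i.e. $R(\alpha,\beta,\gamma;l)$. Since $\{\alpha l,\beta m\}$ is the set of roots of $Q$, putting $e:=\alpha l$ and $f:=\beta m$ gives $R(\alpha,\beta,\gamma;\alpha^{-1}e)$ on $M'$ and $R(\alpha,\beta,\gamma;\alpha^{-1}f)$ on $K^2$. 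When $Q$ has a double root we have $\theta=0$, hence $\alpha l=\beta m$ and $\alpha^{-1}\beta m=l$: the two parameter quadruples coincide, so the actions on $M'$ and $K^2$ are isomorphic.

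For irreducibility I would argue that a $G$-stable line $\langle v\rangle$ must, for each $i$, satisfy $s_i.v-v\in\langle v\rangle$, i.e. (writing $s_i.v=v-f_i(v)c_i$) either $f_i(v)=0$ or $v\in\langle c_i\rangle$. The second alternative is impossible for any fixed $i$: if $v\in\langle c_1\rangle$ then $s_2.v\notin\langle v\rangle$ because $\alpha\neq0$ and $(c_1,c_2)$ is a basis, and likewise for the other indices. Hence $v$ lies in the common kernel of $f_1,f_2,f_3$; solving $-x-y=0$, $2x-my=0$, $-lx+2y=0$ in the basis $(c_2,c_3)$ and using $m+2\neq0$ forces $v=0$. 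Thus the $K^2$-representation is irreducible in every case.

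Finally, for 2), when the two roots lie in $K-K_0$ let $\sigma$ generate $\mathcal{G}(K/K_0)$; by the discussion preceding the statement $\sigma$ fixes $\alpha,\beta,\gamma\in K_0$ and interchanges $\alpha l$ and $\beta m$, that is $\sigma(e)=f$. Consequently $\alpha^{-1}f=\sigma(\alpha^{-1}e)$, so applying $\sigma$ entrywise to the matrices of $R(\alpha,\beta,\gamma;\alpha^{-1}e)$ produces those of $R(\alpha,\beta,\gamma;\alpha^{-1}f)$, and the two representations are conjugate. The only genuinely delicate point is the isomorphism asserted in the double-root case: one must know that equal parameters force isomorphism, which I would settle either by invoking the uniqueness of the irreducible reflection representation attached to a parameter system (from the earlier parts) or, more safely, by checking that the explicit rescaling above intertwines the two actions by sending the normalized $c_i$ to the images of the $a_i$ in $M'$.
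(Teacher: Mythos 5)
Your proof is correct and follows essentially the same route as the paper: the normalization $c_2\mapsto\alpha c_2$, $c_3\mapsto\beta c_3$ is exactly the paper's substitution $d_1=c_1$, $d_2=\alpha c_2$, $d_3=\beta c_3$, from which the parameters $\alpha^{-1}\beta m$ (and, for the double root, the identification with $M'$) and the Galois conjugacy in part 2 are read off identically. The only difference is that you spell out the irreducibility on $K^{2}$ (via the nonvanishing of $4-\gamma=4-lm$), which the paper simply declares clear.
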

\begin{proof}
Posons, pour cette démonstration uniquement, $d_{1}:=c_{1}$, $d_{2}:=\alpha c_{2}$, et $d_{3}:=\beta c_{3}$. Nos avons les relations:
\[
\begin{array}{c|ccc}
& s_{1}.d_{1} & = & -d_{1}\\
& s_{1}.d_{2} & = & d_{2}+\alpha d_{1}\\
& s_{1}.d_{3} & = & d_{3}+\beta d_{1}
\end{array}
,
\begin{array}{c|ccc}
& s_{2}.d_{1} & = & d_{1}+ d_{2}\\
& s_{2}.d_{2} & = & -d_{2}\\
& s_{2}.d_{3} & = & d_{3}+\alpha^{-1}\beta md_{2}
\end{array}
,
\begin{array}{c|ccc}
& s_{3}.d_{1} & = & d_{1}+ d_{3}\\
& s_{3}.d_{2} & = & d_{2}+\beta^{-1}\alpha ld_{3}\\
& s_{3}.d_{3} & = & -d_{3}
\end{array}
.
\]
Si $Q(X)$ admet une racine double, nous avons $\alpha l=\beta m$ donc $s_{2}.d_{3}=d_{3}+ld_{2}$ et $s_{3}.d_{2}=d_{2}+md_{3}$. L'opération considérée est isomorphe à l'action de $G$ sur $M'$.\\
Si $Q(X)$ admet deux racines distinctes $e$ et $f$, alors si l'on pose $l':=\alpha^{1}\beta m$ et $m':=\beta^{-1}\alpha m$, nous obtenons $\alpha l'=\beta m$ et $\beta m'=\alpha l$, d'où le résultat dans ce cas. Si $e$ et $f$ ne sont pas dans $K_{0}$, alors nous avons $\sigma(\alpha l)=\beta m=\alpha\sigma(l)$ et nous obtenons $\sigma(l)=\alpha^{-1}\beta m=l'$ et de même $\sigma(m)=m'$ donc $s_{2}.d_{3}=d_{3}+\sigma(l)d_{2}$, $s_{3}.d_{2}=d_{2}+\sigma(m)d_{3}$. les représentations $R(\alpha,\beta,\gamma;\alpha^{-1}e)$ et $R(\alpha,\beta,\gamma;\alpha^{-1}f)$ sont conjuguées. Il est clair que dans tous les cas, la représentation de $G$ sur $K^{2}$ est irréductible.
\end{proof}
\section{Un résultat sur l'extension de $G/N$ par $N$.}
Dans ce paragraphe nous donnons une condition nécéssaire pour que la suite exacte:
\[
(**)\quad
\{1\}\longrightarrow N\longrightarrow G\longrightarrow G'\longrightarrow\{1\}
\]
soit scindée.

Pour les calculs sur l'extension de $G'$ par $N$, nous aurons besoin des résultats préliminaires suivants:
\begin{proposition}
On garde les hypothèses précédentes.\\
\begin{enumerate}
  \item On pose $g_{1}:=s_{3}s_{2}$. Dans la base $(c_{2},c_{3})$ de $K^{2}$ on a:
  \[
  \forall n\in N,g_{1}^{n}=
  \begin{pmatrix}
-u_{2n-1}(\gamma) & mu_{2n}(\gamma)\\
-lu_{2n}(\gamma) & u_{2n+1}(\gamma)
\end{pmatrix}
  \]
  \begin{itemize}
  \item Si $n$ est pair on a:
  \[
  Id+g_{1}+g_{1}^{2}+\cdots+g_{1}^{n-1}=u_{n}(\gamma)
  \begin{pmatrix}
-\gamma u_{n-2}(\gamma) & mu_{n-1}(\gamma)\\
-lu_{n-1}(\gamma) & \gamma u_{n}(\gamma)
\end{pmatrix}
  \]
  \item 
  Si $n$ est impair on a:
  \[
  Id+g_{1}+g_{1}^{2}+\cdots+g_{1}^{n-1}=u_{n}(\gamma)
  \begin{pmatrix}
- u_{n-2}(\gamma) & mu_{n-1}(\gamma)\\
-lu_{n-1}(\gamma) & u_{n}(\gamma)
\end{pmatrix}
  \]
\end{itemize}
  \item 
  On pose $g_{2}:=s_{1}s_{3}$. Dans la base $(c_{1},c_{3})$ de $K^{2}$ on a:
  \[
  \forall n\in N,g_{2}^{n}=
  \begin{pmatrix}
-u_{2n-1}(\beta) & u_{2n}(\beta)\\
-\beta u_{2n}(\beta) & u_{2n+1}(\beta)
\end{pmatrix}
  \]
  \begin{itemize}
  \item Si $n$ est pair on a:
  \[
  Id+g_{2}+g_{2}^{2}+\cdots+g_{2}^{n-1}=u_{n}(\beta)
  \begin{pmatrix}
-\beta u_{n-2}(\beta) & u_{n-1}(\beta)\\
-\beta u_{n-1}(\beta) & \beta u_{n}(\beta)
\end{pmatrix}
  \]
  \item 
  Si $n$ est impair on a:
  \[
  Id+g_{2}+g_{2}^{2}+\cdots+g_{2}^{n-1}=u_{n}(\beta)
  \begin{pmatrix}
- u_{n-2}(\beta) & u_{n-1}(\beta)\\
-\beta u_{n-1}(\gamma) & u_{n}(\beta)
\end{pmatrix}
  \]
\end{itemize}
  \item On pose $g_{3}:=s_{2}s_{1}$. Dans la base $(c_{1},c_{2})$ de $K^{2}$ on a:
  \[
  \forall n\in N,g_{3}^{n}=
  \begin{pmatrix}
-u_{2n-1}(\alpha) & u_{2n}(\alpha)\\
-\alpha u_{2n}(\alpha) & u_{2n+1}(\alpha)
\end{pmatrix}
  \]
  \begin{itemize}
  \item Si $n$ est pair on a:
  \[
  Id+g_{3}+g_{3}^{2}+\cdots+g_{3}^{n-1}=u_{n}(\alpha)
  \begin{pmatrix}
-\alpha u_{n-2}(\alpha) & u_{n-1}(\alpha)\\
-\alpha u_{n-1}(\alpha) & \alpha u_{n}(\alpha)
\end{pmatrix}
  \]
  \item 
  Si $n$ est impair on a:
  \[
  Id+g_{3}+g_{3}^{2}+\cdots+g_{3}^{n-1}=u_{n}(\alpha)
  \begin{pmatrix}
- u_{n-2}(\alpha) & u_{n-1}(\alpha)\\
-\alpha u_{n-1}(\alpha) & u_{n}(\alpha)
\end{pmatrix}
  \]
\end{itemize}
\end{enumerate}
\end{proposition}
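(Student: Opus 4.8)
The plan is to handle the three assertions by one model computation, since $g_1=s_3s_2$, $g_2$, $g_3$ are the products of the two generators of the three rank-two parabolic subgroups $\langle s_2,s_3\rangle$, $\langle s_1,s_3\rangle$, $\langle s_1,s_2\rangle$, attached respectively to the parameters $\gamma$, $\beta$, $\alpha$. First I would read off the matrix of $g_1$ on the basis $(c_2,c_3)$ straight from the action relations recorded just before the Proposition, using $s_2.c_2=-c_2$, $s_2.c_3=c_3+mc_2$, $s_3.c_2=c_2+lc_3$, $s_3.c_3=-c_3$. This gives $g_1=\begin{pmatrix}-1 & m\\ -l & \gamma-1\end{pmatrix}$ once one knows $lm=\gamma$; and $lm=\gamma$ is nothing but the fact that the product of the two roots $\alpha l,\beta m$ of $Q$ equals $\alpha\beta\gamma$, whence $\gamma=(\alpha l)(\beta m)/(\alpha\beta)=lm$. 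In particular $\det g_1=1$ and $\operatorname{tr}g_1=\gamma-2$. The same reading shows that each $g_k$ has the shape $\begin{pmatrix}-1 & b\\ -c & \theta-1\end{pmatrix}$ with $bc=\theta$, hence lies in $SL_2(K)$ with trace $\theta-2$, where $(b,c,\theta)=(m,l,\gamma)$ for $g_1$, $(1,\beta,\beta)$ for $g_2$ and $(1,\alpha,\alpha)$ for $g_3$.

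Next I would establish the power formula by induction on $n$. The cases $n=0,1$ are immediate from $u_{-1}=-1$, $u_0=0$, $u_1=u_2=1$, $u_3=\theta-1$. For the step I multiply $g_1^{n}$ by $g_1$ and reduce each of the four entries by the recurrences defining the $u_j$, namely $u_{2n+1}=\gamma u_{2n}-u_{2n-1}$ and $u_{2n}=u_{2n-1}-u_{2n-2}$ (equivalently the Cayley--Hamilton relation for an element of $SL_2$ of trace $\gamma-2$). A one-line check then gives that the $(1,1),(1,2),(2,1),(2,2)$ entries of $g_1^{n+1}$ equal $-u_{2n+1}$, $mu_{2n+2}$, $-lu_{2n+2}$, $u_{2n+3}$, so the formula propagates; the computations for $g_2$ and $g_3$ are verbatim with $(b,c,\theta)$ in place of $(m,l,\gamma)$.

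For the partial sums I would set $\Sigma_n:=\mathrm{Id}+g_1+\dots+g_1^{n-1}$ and argue again by induction, adding $g_1^{n}$ to $\Sigma_n$ and repackaging; alternatively, as $\gamma<4$ gives $\theta\neq 4$, the matrix $g_1-\mathrm{Id}$ is invertible and $\Sigma_n=(g_1-\mathrm{Id})^{-1}(g_1^{n}-\mathrm{Id})$, into which one substitutes the closed form of $g_1^{n}$ and then factors out $u_n(\gamma)$. Either route reduces to quadratic (``product'') identities among the $u_j$: the off-diagonal entries all rest on $u_{2n}=u_n\,(u_{n+1}-u_{n-1})$, which is parity-free, while the diagonal entries rest on $u_{2n+1}=u_{n+1}^2-\gamma u_n^2$ for $n$ even and $u_{2n+1}=\gamma u_{n+1}^2-u_n^2$ for $n$ odd (together with the analogous identity for $u_{2n-1}$). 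The interchange of the factor $\gamma$ between the two terms as the parity changes is exactly what produces the two different displayed matrices, the diagonal acquiring the scalar $\gamma$ when $n$ is even and the scalar $1$ when $n$ is odd.

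I expect these parity-dependent quadratic identities to be the only real obstacle: the power formula is a routine single-step induction, whereas the partial-sum formula forces one to pin down the correct product identities and to keep the index bookkeeping ($u_{n-2},u_{n-1},u_n$ against the common factor $u_n$) consistent across the even/odd split. Once the model case $g_1$ is settled, $g_2$ and $g_3$ follow by the same argument under the substitutions $(m,l,\gamma)\mapsto(1,\beta,\beta)$ and $(m,l,\gamma)\mapsto(1,\alpha,\alpha)$, which is precisely the symmetry among the three rank-two parabolic subgroups.
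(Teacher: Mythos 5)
Your proposal is correct and follows essentially the same route as the paper: the closed form for $g_i^{n}$ by a one-step induction using the recurrences for the $u_k$, and the partial sums reduced to the product identities $u_{2n}=u_n(u_{n+1}-u_{n-1})$ and the parity-split formulas for $u_{2n\pm1}$, which are exactly the \og formules de \cite{Z2}\fg{} the paper invokes without restating. The only differences are cosmetic (you make those identities explicit and offer the alternative $(g_1-\mathrm{Id})^{-1}(g_1^{n}-\mathrm{Id})$ computation, legitimate since $\det(g_1-\mathrm{Id})=4-\gamma\neq0$), so no further comparison is needed.
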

\begin{proof}
Les formules pour $g_{i}^{n}$ se montrent sans difficultés par récurrence sur $n$. Les autres formules résultent alors immédiatement des formules de  \cite{Z2}.
\end{proof}
Soit $\pi:G\to G/N=G'$ la projection canonique et soit $\tau:G'\to G$ une section $(\pi\circ\tau=id_{G'}$) choisie de telle sorte que l'on ait:
\[
\forall i (1\leqslant i \leqslant3),\tau(s_{i})=s_{i}':=s_{i}(\lambda_{i}c_{i})\: \text{avec}\: \lambda_{i}c_{i}\in N\: \text{et}\:\lambda_{i}\in K^{*},\tau(1)=1.
\]
\begin{proposition}
Avec les hypothèses et notations ci-dessus, on a $\forall n\in \mathbb{N}$:
\begin{enumerate}
  \item $(s_{1}'s_{2}')^{n}=(s_{1}s_{2})^{n}b_{n}$ avec
  \begin{itemize}
  \item si $n$ est pair:
  \[
  b_{n}=u_{n}(\alpha)((\alpha u_{n}(\alpha)\lambda_{1}+u_{n-1}(\alpha)\lambda_{2})c_{1}+\alpha(u_{n+1}(\alpha)\lambda_{1}+u_{n}(\alpha)\lambda_{2})c_{2});
  \]
  \item si $n$ est impair:
\[
b_{n}=u_{n}(\alpha)((u_{n}(\alpha)\lambda_{1}+u_{n-1}(\alpha)\lambda_{2})c_{1}+(\alpha u_{n+1}(\alpha)\lambda_{1}+u_{n}(\alpha)\lambda_{2})c_{2}).
\] 
\end{itemize}
  \item $(s_{1}'s_{3}')^{n}=(s_{1}s_{3})^{n}b_{n}'$ avec
  \begin{itemize}
  \item si $n$ est pair:
  \[
  b_{n}'=u_{n}(\beta)((\beta u_{n}(\beta)\lambda_{1}+u_{n-1}(\beta)\lambda_{3})c_{1}+\beta(u_{n+1}(\beta)\lambda_{1}+u_{n}(\beta)\lambda_{3})c_{3});
  \]
  \item si $n$ est impair:
\[
b_{n}'=u_{n}(\beta)((u_{n}(\beta)\lambda_{1}+u_{n-1}(\beta)\lambda_{3})c_{1}+(\beta u_{n+1}(\beta)\lambda_{1}+u_{n}(\beta)\lambda_{3})c_{3}).
\] 
\end{itemize}
 \item $(s_{2}'s_{3}')^{n}=(s_{2}s_{2})^{n}b_{n}''$ avec
  \begin{itemize}
  \item si $n$ est pair:
  \[
  b_{n}''=u_{n}(\gamma)((\gamma u_{n}(\gamma)\lambda_{2}+mu_{n-1}(\gamma)\lambda_{3}c_{2}+(lu_{n+1}(\gamma)\lambda_{2}+\gamma u_{n}(\gamma)\lambda_{3})c_{3});
  \]
  \item si $n$ est impair:
\[
b_{n}''=u_{n}(\gamma)((u_{n}(\gamma)\lambda_{2}+mu_{n-1}(\gamma)\lambda_{3})c_{2}+(l u_{n+1}(\gamma)\lambda_{2}+u_{n}(\gamma)\lambda_{3})c_{3}).
\] 
\end{itemize}
\end{enumerate}
\end{proposition}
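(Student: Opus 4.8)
The plan is to read off everything from the conjugation action on $N$ already tabulated in the preceding proposition, exploiting the way $N$ sits normally in $G$. Write $\zeta_i:=\lambda_i c_i\in N$, so that $s_i'=s_i\zeta_i$, and recall that $N$, identified with a subgroup of $K^2$, is written additively and carries the linear action $h.\zeta=h\zeta h^{-1}$. Because $N$ is normal I would first push the translation part of a product to the right: for $i\neq j$,
\[
s_i's_j'=s_i\zeta_i\,s_j\zeta_j=(s_is_j)\big(s_j^{-1}\zeta_i s_j\big)\zeta_j=(s_is_j)\,\eta,\qquad \eta:=s_j.\zeta_i+\zeta_j\in N,
\]
since $s_j^{-1}=s_j$. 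The vector $\eta$ is computed in the basis $(c_i,c_j)$ from the relations $s_i.c_j$ established earlier; e.g. for part 1, $s_2.c_1=c_1+\alpha c_2$ gives $\eta=\lambda_1c_1+(\alpha\lambda_1+\lambda_2)c_2$, and similarly $\eta=\lambda_2 c_2+(l\lambda_2+\lambda_3)c_3$ for part 3.

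Next I would prove the elementary power formula: for $g\in G$ and $\eta\in N$,
\[
(g\eta)^n=g^n\Big(\sum_{k=0}^{n-1}g^{-k}\Big).\eta,
\]
by a one-step induction, the inductive step being $(g\eta)^{n+1}=g^{n+1}\big(g^{-1}.A_n+\eta\big)$ with $A_n:=(\sum_{k=0}^{n-1}g^{-k}).\eta$ and $g^{-1}.A_n=(\sum_{k=1}^{n}g^{-k}).\eta$. Taking $g=s_is_j$ yields $(s_i's_j')^n=(s_is_j)^n\,b$ with $b=\big(\sum_{k=0}^{n-1}(s_is_j)^{-k}\big).\eta$. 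The crux is that $(s_is_j)^{-1}=s_js_i$ is exactly the element of the preceding proposition: $(s_1s_2)^{-1}=s_2s_1=g_3$, $(s_1s_3)^{-1}=s_3s_1=g_2$, $(s_2s_3)^{-1}=s_3s_2=g_1$. Hence $\sum_{k=0}^{n-1}(s_is_j)^{-k}=\sum_{k=0}^{n-1}g_\bullet^{\,k}$ is precisely the partial-sum operator whose matrix, in the basis $(c_i,c_j)$, is written out there and already split according to the parity of $n$.

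It then remains to apply that matrix to the column vector of $\eta$; the two parity cases of the statement are inherited verbatim from the two cases of the preceding proposition. Collapsing the entries into the announced closed forms uses only the recurrences satisfied by the $u_n$ (see \cite{Z2}): for the even case of part 1 one needs $u_{n-1}(\alpha)-u_{n-2}(\alpha)=u_n(\alpha)$ and $\alpha u_n(\alpha)-u_{n-1}(\alpha)=u_{n+1}(\alpha)$, which are exactly the two parity-alternating steps of that recurrence (coefficient $1$ at even index, coefficient $\alpha$ at odd index), and the analogues in $\beta$ and $\gamma$ dispatch parts 2 and 3. I would carry this computation out once and then repeat it \emph{mutatis mutandis} for the other two pairs of generators.

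The conceptual input is light --- normality of $N$, linearity of the conjugation action, and the geometric-series identity above --- so I expect the only genuine obstacle to be the bookkeeping: matching each product $s_is_j$ with the correct $g_\bullet=(s_is_j)^{-1}$ and the correct basis, selecting the right parity branch, and invoking the recurrence with the correct alternating coefficient at each index. Getting any of these indices or coefficients wrong would spoil the final collapse, but there is no substantive difficulty beyond this sign-and-index care.
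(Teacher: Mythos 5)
Your proposal is correct and follows essentially the same route as the paper: write $s_i's_j'=(s_is_j)\eta$ with $\eta=s_j.\zeta_i+\zeta_j$ (the paper's $a=\lambda_1c_1+(\alpha\lambda_1+\lambda_2)c_2$), establish by induction that $(s_i's_j')^n=(s_is_j)^n\big(Id+g_\bullet+\cdots+g_\bullet^{n-1}\big).\eta$ with $g_\bullet=(s_is_j)^{-1}$, and then substitute the parity-split partial-sum matrices of the preceding proposition. Your explicit identification of $(s_is_j)^{-1}=s_js_i$ with the correct $g_\bullet$ and your final collapse via the $u_n$ recurrences are exactly the steps the paper leaves implicit in ``Nous appliquons maintenant les résultats de la proposition 2.''
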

\begin{proof}
Dans toute la suite on emploie la notation additive pour $N$. On a $s_{1}'s_{2}'=s_{1}\lambda_{1}c_{1}s_{2}\lambda_{2}c_{2}=s_{1}s_{2}(\lambda c_{1}+(\alpha\lambda_{1}+\lambda_{2})c_{2})$. Posons, pour cette démonstration uniquement, $a:=\lambda c_{1}+(\alpha\lambda_{1}+\lambda_{2})c_{2}$. On a $s_{1}'s_{2}'=s_{1}s_{2}a$, puis $(s_{1}'s_{2}')^{2}=s_{1}s_{2}as_{1}s_{2}a=(s_{1}s_{2})^{2}(g_{3}.a+a)$. On montre facilement par récurrence sur $n$ que $(s_{1}'s_{2}')^{n}=(s_{1}s_{2})^{n}(g_{3}^{n-1}.a+g_{3}^{n-2}.a+\cdots+g_{3}.a+a)$. Il en résulte que $(s_{1}'s_{2}')^{n}=(s_{1}s_{2})^{n}((Id+g_{3}+\cdots+g_{3}^{n-1}).a)$. Nous appliquons maintenant les résultats de la proposition 2. Les autres résultats se montrent de la même manière.
\end{proof}
\begin{corollary}
Avec les notations précédentes, si $s_{1}s_{2}$ est d'ordre $p$ (resp. si $s_{1}s_{3}$ est d'ordre $q$, resp. si $s_{2}s_{3}$ est d'ordre $r$) alors $s_{1}'s_{2}'$ est d'ordre $p$ (resp.  $s_{1}'s_{3}'$ est d'ordre $q$, resp.  $s_{2}'s_{3}'$ est d'ordre $r$).
\end{corollary}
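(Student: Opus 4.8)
Je n'examine que $s_1's_2'$ : les cas de $s_1's_3'$ et de $s_2's_3'$ s'en déduisent en permutant les paramètres $(\alpha,\beta,\gamma)$ et les indices correspondants, les trois énoncés étant formellement identiques.

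Le point de départ est la proposition 3, qui donne $(s_1's_2')^n=(s_1s_2)^nb_n$ avec $b_n\in N$. En reprenant sa démonstration on a même $b_n=(Id+g_3+\cdots+g_3^{n-1}).a$, où $g_3=s_2s_1$ et $a\in K^2$, et la proposition 2 montre que $b_n$ admet $u_n(\alpha)$ en facteur. Pour majorer l'ordre je prends $n=p$ : l'hypothèse fournit $(s_1s_2)^p=1$, et il reste à voir que $b_p=0$. Or $g_3$ est d'ordre $p$ sur la représentation irréductible $K^2$ (on le lit directement sur la matrice de $g_3^n$ : $g_3^p=Id$ et $p$ est minimal) ; c'est donc une rotation non triviale dont les valeurs propres sont les racines primitives $p$-ièmes de l'unité $e^{\pm 2i\pi/p}$. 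La somme de ces racines sur une période étant nulle, on obtient $Id+g_3+\cdots+g_3^{p-1}=0$, d'où $b_p=0$ et $(s_1's_2')^p=1$. Ainsi l'ordre de $s_1's_2'$ divise $p$.

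Pour la minoration, je remarque d'abord que $\pi(s_i')=\pi(s_i)$, puisque $\lambda_ic_i\in N=\ker\pi$, et donc $\pi(s_1's_2')=\pi(s_1s_2)$. Comme $\langle s_1s_2\rangle$ est cyclique fini d'ordre $p$, c'est un groupe de torsion, tandis que $N$ est sans torsion (partie 4) ; par suite $\langle s_1s_2\rangle\cap N=\{1\}$, de sorte que $\pi$ est injective sur $\langle s_1s_2\rangle$ et que $\pi(s_1s_2)$ est d'ordre $p$ dans $G'$. Puisque l'ordre de $\pi(s_1's_2')$ divise toujours celui de $s_1's_2'$, on en tire que $p$ divise l'ordre de $s_1's_2'$. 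En combinant avec la majoration obtenue plus haut, $s_1's_2'$ est exactement d'ordre $p$.

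Le principal obstacle est de garantir l'annulation $b_p=0$ : les puissances $g_3^p$ font intervenir $u_{2p}(\alpha)$ alors que $b_p$ fait intervenir $u_p(\alpha)$, et il faut s'assurer que l'hypothèse « $s_1s_2$ d'ordre $p$ » entraîne bien la nullité de ce dernier. On peut le vérifier par un calcul explicite (avec $\alpha=4\cos^2(\pi/p)$, le numérateur $\sin(p\pi/p)=\sin\pi$ s'annule quelle que soit la parité de $p$, cf. \cite{Z2}), mais l'argument par les valeurs propres de la rotation $g_3$ — somme d'une progression géométrique de racines de l'unité — contourne toute discussion d'indexation et de parité et fournit directement $Id+g_3+\cdots+g_3^{p-1}=0$, donc $b_p=0$.
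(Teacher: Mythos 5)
Votre démonstration est correcte et suit le même squelette que celle du texte (formule $(s_{1}'s_{2}')^{n}=(s_{1}s_{2})^{n}b_{n}$ de la proposition 3, annulation de $b_{p}$, puis minimalité), mais les deux étapes clefs sont traitées par des arguments différents. Pour la majoration, le texte exploite le facteur $u_{n}(\alpha)$ mis en évidence dans les formules de $b_{n}$ : si $s_{1}s_{2}$ est d'ordre $p$, alors $\alpha$ est racine de $v_{p}(X)$, donc $u_{p}(\alpha)=0$ et $b_{p}=0$. Vous obtenez la même annulation en montrant que l'opérateur $Id+g_{3}+\cdots+g_{3}^{p-1}$ est nul tout entier : c'est licite, car $g_{3}$ est d'ordre fini en caractéristique nulle, donc diagonalisable sur une clôture algébrique, ses deux valeurs propres sont des racines primitives $p$-ièmes de l'unité et aucune n'est égale à $1$ puisque $p\geqslant 3$ ($0<\alpha<4$) ; il faudrait seulement rendre explicite que l'ordre de $g_{3}$ sur $K^{2}$ coïncide avec celui de $s_{1}s_{2}$ sur $M$ parce que tous deux ne dépendent que du paramètre $\alpha=C(s_{1},s_{2})$, commun aux deux représentations. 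Cette variante évite la discussion de parité et les identités sur les polynômes $u_{n}$. Pour la minoration, le texte déduit $p\mid n$ de l'annulation $u_{n}(\alpha)=0$ lue sur les formules de $b_{n}$, ce qui suppose implicitement que le vecteur entre parenthèses ne s'annule pas ; votre argument par la projection $\pi$ et l'absence de torsion de $N$ (d'où $\langle s_{1}s_{2}\rangle\cap N=\{1\}$, donc $\pi(s_{1}s_{2})$ d'ordre $p$ dans $G'$, et cet ordre divise celui de $s_{1}'s_{2}'$) est plus robuste et ne dépend d'aucune propriété des $u_{n}$. Les deux approches aboutissent au même résultat ; la vôtre est plus conceptuelle, celle du texte plus calculatoire mais cohérente avec les formules explicites réutilisées ensuite au corollaire 2.
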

\begin{proof}
Si $s_{1}s_{2}$ est d'ordre $p$ alors $\alpha$ est racine de $v_{p}(X)$ donc $u_{p}(\alpha)=0$ et $b_{p}=0$: $(s_{1}'s_{2}')^{p}=(s_{1}s_{2})^{p}=1$: $s_{1}'s_{2}'$ est d'ordre un diviseur de $p$. Si $s_{1}'s_{2}'$ est d'ordre $n$, alors les formules donnant $b_{n}$ montrent que $u_{n}(\alpha)=0$, donc $p$ divise $n$ et $p=n$: $s_{1}'s_{2}'$ est d'ordre $p$.
\end{proof}
En complément de la proposition 3, nous avons:
\begin{corollary}
Avec les notations précédentes:
\begin{enumerate}
  \item Si $p$ est pair, $p=2p_{1}$, alors
  \[
  b_{p_{1}}=\frac{2}{4-\alpha}((2\lambda_{1}+\lambda_{2})c_{1}+(\alpha\lambda_{1}+2\lambda_{2})c_{2})=(-2\lambda_{2},\frac{2(2\beta+\alpha l)}{4-\alpha}\lambda_{1}+\frac{2(\beta+2l)}{4-\alpha}\lambda_{2});
 \]
  \item si $q$ est pair, $q=2q_{1}$, alors
   \[
  b_{q_{1}}'=\frac{2}{4-\beta}((2\lambda_{1}+\lambda_{3})c_{1}+(\beta\lambda_{1}+2\lambda_{3})c_{3})=(\frac{2(2\alpha+\beta m}{4-\beta}\lambda_{1}+\frac{2(\alpha+2m}{4-\beta}\lambda_{3},-2\lambda_{3});
 \]
  \item si $r$ est pair, $r=2r_{1}$, alors
  \[
  b_{r_{1}}''=\frac{2}{4-\gamma}((2\lambda_{2}+m\lambda_{3})c_{1}+(l\lambda_{2}+2\lambda_{3})c_{3})=(-2\lambda_{2},-2\lambda_{3}).
  \]
\end{enumerate}
\end{corollary}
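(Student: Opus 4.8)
The plan is to bypass the parity case-distinction of Proposition 3 by summing the geometric series $\mathrm{Id}+g_3+\cdots+g_3^{p_1-1}$ in closed form. Recall from the proof of Proposition 3 that, writing $a:=\lambda_1 c_1+(\alpha\lambda_1+\lambda_2)c_2$, one has $b_{p_1}=(\mathrm{Id}+g_3+\cdots+g_3^{p_1-1}).a$ with $g_3=s_2s_1$. The key point will be the ``half-turn'' identity $g_3^{p_1}=-\mathrm{Id}$, after which the whole computation reduces to inverting a single $2\times2$ matrix.

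First I would establish $g_3^{p_1}=-\mathrm{Id}$. Since $s_1s_2$ has order $p=2p_1$, Corollary 1 gives $u_p(\alpha)=u_{2p_1}(\alpha)=0$; substituting this into the expression for $g_3^{p_1}$ furnished by Proposition 2(3) (taken at $n=p_1$, so that $2n=p$) kills both off-diagonal entries, so $g_3^{p_1}$ is diagonal. It is an involution, because $g_3^{p}=\mathrm{Id}$ (as $s_2s_1$ has order $p$ in $G$); it is not the identity (the order is exactly $p$); and $\det g_3^{p_1}=(\det g_3)^{p_1}=1$, since $\det g_3=1$. The only diagonal $2\times2$ matrix that is an involution, has determinant $1$, and differs from $\mathrm{Id}$ is $-\mathrm{Id}$, whence $g_3^{p_1}=-\mathrm{Id}$.

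With this in hand the computation is routine. As $0<\alpha<4$ we have $\det(g_3-\mathrm{Id})=4-\alpha\neq 0$, so $g_3-\mathrm{Id}$ is invertible, and telescoping gives $(g_3-\mathrm{Id})(\mathrm{Id}+g_3+\cdots+g_3^{p_1-1})=g_3^{p_1}-\mathrm{Id}=-2\,\mathrm{Id}$; hence $\mathrm{Id}+g_3+\cdots+g_3^{p_1-1}=-2(g_3-\mathrm{Id})^{-1}$ and therefore $b_{p_1}=-2(g_3-\mathrm{Id})^{-1}.a$. Using the matrix of $g_3$ in the basis $(c_1,c_2)$, namely $\left(\begin{smallmatrix}-1&1\\-\alpha&\alpha-1\end{smallmatrix}\right)$, I would invert $g_3-\mathrm{Id}$ and apply it to $a$ to obtain the first displayed expression for $b_{p_1}$; the second form (the coordinates $(-2\lambda_2,\dots)$ of $b_{p_1}$ as an element of $N\subset K^2$) then follows by substituting $c_1=(\alpha,\beta)$ and $c_2=(-2,l)$ and simplifying.

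Parts (2) and (3) are entirely parallel. For (2) one takes $g_2$ (the order-$q$ rotation in $\langle s_1,s_3\rangle$), uses $u_q(\beta)=0$ to get $g_2^{q_1}=-\mathrm{Id}$, and applies $-2(g_2-\mathrm{Id})^{-1}$ to $a':=\lambda_1 c_1+(\beta\lambda_1+\lambda_3)c_3$, converting coordinates with $c_3=(m,-2)$; for (3) one takes $g_1=s_3s_2$, uses $u_r(\gamma)=0$ to get $g_1^{r_1}=-\mathrm{Id}$, and applies $-2(g_1-\mathrm{Id})^{-1}$ to $a'':=\lambda_2 c_2+(l\lambda_2+\lambda_3)c_3$, the final simplifications using the relation $\gamma=lm$ coming from the definition of the parameters. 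The only genuine content is the half-turn identity $g^{(\text{half order})}=-\mathrm{Id}$; once it is secured, everything reduces to inverting an explicit $2\times2$ matrix of determinant $4-\alpha$, $4-\beta$, $4-\gamma$ respectively, so I anticipate no real obstacle beyond careful bookkeeping of the index conventions of Proposition 2.
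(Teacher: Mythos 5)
Your proof is correct, but it takes a genuinely different route from the paper's. The paper obtains item 1 by starting from the expression for $b_{p_{1}}$ given in Proposition 3 (which already splits according to the parity of the exponent) and substituting the identities $\alpha(4-\alpha)u_{p_{1}}^{2}(\alpha)=4$ (resp. $(4-\alpha)u_{p_{1}}^{2}(\alpha)=4$), $(4-\alpha)u_{p_{1}}(\alpha)u_{p_{1}-1}(\alpha)=2$ and $u_{p_{1}+1}(\alpha)=u_{p_{1}-1}(\alpha)$, valid because $\alpha$ is a root of $v_{2p_{1}}(X)$; these come from la proposition 10 de \cite{Z2} and force a case distinction on the parity of $p_{1}$. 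You instead sum $\mathrm{Id}+g_{3}+\cdots+g_{3}^{p_{1}-1}$ in closed form as $-2(g_{3}-\mathrm{Id})^{-1}$ after establishing the half-turn identity $g_{3}^{p_{1}}=-\mathrm{Id}$. That identity is correctly derived: Proposition 2 together with $u_{p}(\alpha)=u_{2p_{1}}(\alpha)=0$ makes $g_{3}^{p_{1}}$ diagonal, and the involution/determinant/non-identity argument pins it down to $-\mathrm{Id}$; the one point you should make explicit is that ``order exactly $p$'' must be read as the order of $g_{3}$ acting on $K^{2}$, which holds because by Proposition 1 that action is again a reflection representation of $W(p,q,r)$ with the same parameter $\alpha$ (equivalently, $u_{n}(\alpha)=0$ forces $p\mid n$, the fact already invoked in Corollary 1). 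The computations check out: $\det(g_{3}-\mathrm{Id})=4-\alpha\neq 0$, and $-2(g_{3}-\mathrm{Id})^{-1}$ applied to $a=\lambda_{1}c_{1}+(\alpha\lambda_{1}+\lambda_{2})c_{2}$ gives exactly $\frac{2}{4-\alpha}\bigl((2\lambda_{1}+\lambda_{2})c_{1}+(\alpha\lambda_{1}+2\lambda_{2})c_{2}\bigr)$, and likewise for items 2 and 3. What your approach buys is the elimination of both the parity case-split and the dependence on the special identities of \cite{Z2}, proposition 10 (you only need $u_{p}(\alpha)=0$), plus a conceptual explanation of the denominator $4-\alpha$ as $\det(g_{3}-\mathrm{Id})$; what the paper's approach buys is that it stays entirely inside the machinery of Propositions 2 and 3 already set up, at the cost of heavier bookkeeping.
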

\begin{proof}
On ne fait la démonstration que du 1), les deux autres démonstrations se faisant de la même manière. Pour cela nous utilisons les formules de \cite{Z2}  proposition 10.\\
Distinguons deux cas suivant la parité de $p_{1}$.
\begin{itemize}
  \item Si $p$ est pair, d'après la proposition 3, on a:
  \[
  b_{p_{1}}=u_{p_{1}}(\alpha)((\alpha u_{p_{1}}(\alpha)\lambda_{1}+u_{p_{1}-1}(\alpha)\lambda_{2})c_{1}+\alpha(u_{p_{1}+1}(\alpha)\lambda_{1}+u_{p_{1}}(\alpha)\lambda_{2})c_{2}
  \]
  mais $\alpha(4-\alpha)u_{p_{1}}^{2}(\alpha)=4$, $(4-\alpha)u_{p_{1}}(\alpha)u_{p_{1}-1}(\alpha)=2$ et $u_{p_{1}+1}(\alpha)=u_{p_{1}-1}(\alpha)$ car $\alpha$ est racine de $v_{2p_{1}}(X)$. Nous en déduisons:
  \[
  b_{p_{1}}=(\frac{4}{4-\alpha}\lambda_{1}+\frac{2}{4-\alpha}\lambda_{2})c_{1}+(\frac{2\alpha}{4-\alpha}\lambda_{1}+\frac{4}{4-\alpha}\lambda_{2})c_{2}
  \]
  ou encore:
  \[
  b_{p_{1}}=\frac{2}{4-\alpha}((2\lambda_{1}+\lambda_{2})c_{1}+(\alpha\lambda_{1}+2\lambda_{2})c_{2}).
  \]
\item Si $p$ est impair, d'après la proposition 3, on a:
 \[
  b_{p_{1}}=u_{p_{1}}(\alpha)( u_{p_{1}}(\alpha)\lambda_{1}+u_{p_{1}-1}(\alpha)\lambda_{2})c_{1}+(\alpha u_{p_{1}+1}(\alpha)\lambda_{1}+u_{p_{1}}(\alpha)\lambda_{2})c_{2}
  \]
  mais $(4-\alpha)u_{p_{1}}^{2}(\alpha)=4$, $(4-\alpha)u_{p_{1}}(\alpha)u_{p_{1}-1}(\alpha)=2$, car $\alpha$ est racine de $v_{2p_{1}}(X)$. Nous en déduisons:
   \[
  b_{p_{1}}=(\frac{4}{4-\alpha}\lambda_{1}+\frac{2}{4-\alpha}\lambda_{2})c_{1}+(\frac{2\alpha}{4-\alpha}\lambda_{1}+\frac{4}{4-\alpha}\lambda_{2})c_{2}
  \]
  ou encore:
  \[
  b_{p_{1}}=\frac{2}{4-\alpha}((2\lambda_{1}+\lambda_{2})c_{1}+(\alpha\lambda_{1}+2\lambda_{2})c_{2}).
  \]
\end{itemize}
\end{proof}
Nous donnons maintenant le résultat principal de ce paragraphe.
\begin{theorem}
Si $2|pqr$, une condition nécéssaire pour que la suite exacte $(\star\star)$ soit scindée est que la relation $(\mathcal{E})$ entre $\lambda_{1}$, $\lambda_{2}$ et $\lambda_{3}$ soit satisfaite:
\[
(\mathcal{E}) \quad (4-\gamma)\lambda_{1}+(l+2)\lambda_{2}+(m+2)\lambda_{3}=-1.
\]
Plus précisément, on a l'équivalence entre $(\mathcal{E})$ et l'une des conditions:
\begin{enumerate}
  \item si $p$ est pair, $p=2p_{1}$: $y_{p_{1}}'^{2}=1$;
  \item si $q$ est pair, $q=2q_{1}$: $x_{p_{1}}'^{2}=1$;
  \item si $r$ est pair, $r=2r_{1}$: $t_{r_{1}}'^{2}=1$
\end{enumerate}
(On rappelle que $\lambda_{i}c_{i}\in[N,c_{i}], (1\leqslant i\leqslant3)$). (On a posé: $\forall k\in \mathbb{N}$ $t_{k}=s_{1}(s_{2}s_{3})^{k}$, $x_{k}=s_{2}(s_{3}s_{1})^{k}$ et $y_{k}=s_{3}(s_{1}s_{2})^{k}$.)
\end{theorem}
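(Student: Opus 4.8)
The plan is to translate "splitting" into the requirement that the chosen lifts $s_i'$ satisfy every relation holding in $G'=G/N$, and then to isolate the one relation that is \emph{not} a formal consequence of the Coxeter presentation of $W$. First I would note that any homomorphic section is automatically of the prescribed form: $\tau(\bar s_i)$ must be an involution of $G$ lying over $\bar s_i$, so writing $\tau(\bar s_i)=s_i\zeta_i$ with $\zeta_i\in N$ and imposing $(s_i\zeta_i)^2=s_i.\zeta_i+\zeta_i=0$ forces $s_i.\zeta_i=-\zeta_i$, i.e. $\zeta_i$ lies on the $(-1)$-eigenline $\langle c_i\rangle$ of $s_i$ on $K^2$; hence $\zeta_i=\lambda_ic_i$. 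The braid relations $(s_i's_j')^{m_{ij}}=1$ already hold by Corollaire 1, and the relations $s_i'^2=1$ hold by the computation just made, so $\langle s_1',s_2',s_3'\rangle$ is always a quotient of $W$; the obstruction to splitting must come from a relation present in $G'$ but absent from $W$.

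Second, I would produce that extra relation from the $2$-dimensionality of $M'$. When $p=2p_1$ is even, $\bar s_1\bar s_2$ is a rotation of order $p$ on the plane $M'$, so $(\bar s_1\bar s_2)^{p_1}$ is the rotation by $\pi$, that is $-\mathrm{Id}_{M'}$ (checked on $\langle a_1,a_2\rangle$, then on $M'$ using $W_0(b)=b$, which pins down $v:=W_0(a_3)-a_3$, with $W_0:=(s_1s_2)^{p_1}$). Hence $\bar y_{p_1}=\bar s_3(\bar s_1\bar s_2)^{p_1}=-\bar s_3$ is an involution of $G'$, so $\bar y_{p_1}^2=1$ \emph{in $G'$}. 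For any splitting $\tau$ this gives $y_{p_1}'^2=\tau(\bar y_{p_1})^2=\tau(\bar y_{p_1}^2)=1$; this is exactly where $2\mid pqr$ enters, since without an even label there is no rotation by $\pi$ to exploit. The same argument with $\bar s_3\bar s_1$ (resp. $\bar s_2\bar s_3$) gives $x_{q_1}'^2=1$ when $q$ is even (resp. $t_{r_1}'^2=1$ when $r$ is even).

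Third, I would compute $y_{p_1}'^2\in N$ and set it to $1$. Using that $W_0$ acts as $-\mathrm{Id}$ on $K^2\supseteq N$, I push $\zeta_3$ past $W_0$ to obtain $y_{p_1}'=s_3W_0(b_{p_1}-\zeta_3)=y_{p_1}\,\eta$, with $\eta:=b_{p_1}-\zeta_3\in N$ and $b_{p_1}$ given explicitly by Corollaire 3. Then $y_{p_1}'^2=y_{p_1}^2\,(y_{p_1}^{-1}.\eta+\eta)$, and since $y_{p_1}$ acts as $-\bar s_3$ on $K^2$ this collapses to $y_{p_1}'^2=y_{p_1}^2-\psi_3(\eta)$, where $\psi_3$ is the operator of Théorème 1. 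The constant term $y_{p_1}^2$ is itself a nonzero translation; that it lies in $N$ uses $\gamma=lm$ (from the constant term of $Q$), and a direct matrix computation in the basis $(a_1,a_2,a_3)$ gives $y_{p_1}^2=\bigl(-\tfrac{2m}{m+2},\,\tfrac{4}{m+2}\bigr)$. Setting both coordinates of $y_{p_1}^2-\psi_3(\eta)$ to zero yields a single condition on the $\mu$-component of $\eta$, namely $\frac{2(2\beta+\alpha l)}{4-\alpha}\lambda_1+\frac{2(\beta+2l)}{4-\alpha}\lambda_2+2\lambda_3=-\frac{2}{m+2}$.

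Finally, I would clear denominators and invoke the relations $(\mathcal{T})$: using $(4-\alpha)(4-\gamma)=(m+2)(2\beta+\alpha l)$ and $(4-\alpha)(l+2)=(m+2)(\beta+2l)$, the previous condition becomes exactly $(4-\gamma)\lambda_1+(l+2)\lambda_2+(m+2)\lambda_3=-1$, i.e. $(\mathcal{E})$. This proves $y_{p_1}'^2=1\Leftrightarrow(\mathcal{E})$, and combined with the second step gives the stated necessity; the two remaining cases are identical after permuting indices. The main obstacle is the bookkeeping of the third step: keeping the conjugation action of the $\pi$-rotation straight, evaluating the unprimed translation $y_{p_1}^2$ correctly, and spotting \emph{which} two relations of $(\mathcal{T})$ turn the messy coefficients into the clean left-hand side of $(\mathcal{E})$. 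The point worth double-checking is that all three even-label computations collapse to the \emph{same} equation $(\mathcal{E})$.
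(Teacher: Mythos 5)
Your proposal is correct and follows essentially the same route as the paper: write the lifted element as the unlifted one times a translation, use that the relevant half-turn acts as $-\mathrm{Id}$ on $N\cong K^{2}$ to compute the square inside $N$, and then reduce the resulting equation with the relations $(\mathcal{T})$ — the paper carries this out for the $r$-even case with $t_{r_{1}}'$ (where the coefficients of $(\mathcal{E})$ appear directly), you for the $p$-even case with $y_{p_{1}}'$ (which needs the two relations $(4-\alpha)(4-\gamma)=(m+2)(2\beta+\alpha l)$ and $(4-\alpha)(l+2)=(m+2)(\beta+2l)$ to clean up), and the two are interchangeable; your preliminary steps (that any section must send $\bar{s}_{i}$ to $s_{i}(\lambda_{i}c_{i})$, and that $\bar{y}_{p_{1}}^{2}=1$ in $G'$ forces $y_{p_{1}}'^{2}=1$ under a splitting) make explicit what the paper leaves implicit. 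One remark: your value $y_{p_{1}}^{2}=-\tfrac{2}{m+2}c_{3}$ is the correct one — the signs stated in the paper's Proposition 4 for $y_{p_{1}}^{2}$ and $x_{q_{1}}^{2}$ appear to be off, and it is precisely this minus sign that makes the right-hand side of $(\mathcal{E})$ come out as $-1$ rather than $+1$.
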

Pour la démonstration, nous avons besoin du résultat suivant:
\begin{proposition}
On garde les hypothèses et notations du théorème, si $p$, $q$ et $r$ sont pairs. Alors:
\begin{itemize}
  \item $x_{q_{1}}^{2}=\frac{2}{l+2}c_{2}$;
  \item $y_{p_{1}}^{2}=\frac{2}{m+2}c_{3}$;
  \item $t_{r_{1}}^{2}=\frac{-2}{4-\gamma}c_{1}$.
\end{itemize}
\end{proposition}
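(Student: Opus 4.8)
The plan is to compute each of the three group elements directly on $M$ in the basis $\mathcal{B}=(b,a_2,a_3)$, exploiting the fact that for an \emph{even} bond the corresponding ``half‑rotation'' acts as $-\mathrm{Id}$ on the plane it preserves. I would carry out the case $t_{r_1}^2$ in full and obtain $x_{q_1}^2$, $y_{p_1}^2$ by the cyclic permutation of $\{1,2,3\}$. The conceptual point is that $t_{r_1}=s_1(s_2s_3)^{r_1}$ is a ``glide'': $(s_2s_3)^{r_1}$ reverses the linear part, so $t_{r_1}$ induces an involution on $M'$ and $t_{r_1}^2$ lands in $N$, a pure translation whose coordinates are what we must read off.

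First I would establish the key fact. The plane $P_1:=\langle a_2,a_3\rangle$ is stable under $\langle s_2,s_3\rangle$, and on $P_1$ the element $s_2s_3$ has determinant $1$ and order $r$; being of finite order it is semisimple, with eigenvalues $\zeta,\zeta^{-1}$ for $\zeta$ a primitive $r$-th root of unity. Since $r=2r_1$, the power $w:=(s_2s_3)^{r_1}$ has both eigenvalues equal to $\zeta^{r_1}=-1$, so $w=-\mathrm{Id}$ on $P_1$ (this is consistent with $g_1^{r_1}=-\mathrm{Id}$ on $K^2$ coming from Proposition 2). Then, solving $a_1=\tfrac{1}{4-\gamma}\big(b-(l+2)a_2-(m+2)a_3\big)$ from $b=(4-\gamma)a_1+(l+2)a_2+(m+2)a_3$ (legitimate since $4-\gamma\neq0$) and using $w(b)=b$, $w(a_2)=-a_2$, $w(a_3)=-a_3$, I get at once
\[
w(a_1)=-a_1+\tfrac{2}{4-\gamma}\,b .
\]
In particular $w=-\mathrm{Id}$ on $M'$, so $t_{r_1}=s_1w$ induces $-\overline{s_1}$ on $M'$, an involution, whence $t_{r_1}^2\in N$.

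It then remains to read off the translation vector. Writing $t_{r_1}^2=s_1\,w\,s_1\,w$ and applying it to $a_2$ and to $a_3$, each step uses only $s_1(a_1)=-a_1$, $s_1(a_2)=a_2+\alpha a_1$, $s_1(a_3)=a_3+\beta a_1$, the values of $w$ above, and $s_1(b)=w(b)=b$; the $a_1$–contributions cancel at the final application of $s_1$ and one is left with
\[
t_{r_1}^2(a_2)=a_2-\tfrac{2\alpha}{4-\gamma}\,b,\qquad t_{r_1}^2(a_3)=a_3-\tfrac{2\beta}{4-\gamma}\,b .
\]
By the identification $\zeta=(\lambda,\mu)\leftrightarrow\big(\zeta(a_2)=a_2+\lambda b,\ \zeta(a_3)=a_3+\mu b\big)$ this reads $t_{r_1}^2=\big(-\tfrac{2\alpha}{4-\gamma},-\tfrac{2\beta}{4-\gamma}\big)=-\tfrac{2}{4-\gamma}(\alpha,\beta)=-\tfrac{2}{4-\gamma}c_1$, the asserted value.

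For $x_{q_1}^2$ and $y_{p_1}^2$ I would repeat the argument with the stable plane $\langle a_1,a_3\rangle$ under $\langle s_1,s_3\rangle$ (resp. $\langle a_1,a_2\rangle$ under $\langle s_1,s_2\rangle$): since $q=2q_1$ (resp. $p=2p_1$) the half‑rotation $(s_3s_1)^{q_1}$ (resp. $(s_1s_2)^{p_1}$) is again $-\mathrm{Id}$ on that plane and sends the excluded root $a_2$ (resp. $a_3$) to $-a_2+\tfrac{2}{l+2}b$ (resp. $-a_3+\tfrac{2}{m+2}b$), after which the square is computed on $(a_2,a_3)$ exactly as above. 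The only genuine labour is the linear‑algebra bookkeeping of the four successive applications. The step I expect to be delicate is the \emph{twisted readout}: unlike the $t$‑case, where the fixed basis $(a_2,a_3)$ for $N$ is aligned with the excluded index $1$, here one must re-express the resulting coordinate pair in the form $c_2=(-2,l)$ (resp. $c_3=(m,-2)$), and it is precisely this re-expression that pins down the coefficients $\tfrac{2}{l+2}$ and $\tfrac{2}{m+2}$ together with their signs, so the sign conventions for $c_2,c_3$ must be tracked with care.
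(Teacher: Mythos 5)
Your computation of $t_{r_1}^2$ is complete and correct, and your route is essentially the paper's: the paper likewise computes the relevant half-turn explicitly (quoting the matrix of $(s_3s_1)^{q_1}$ from \cite{Z2}, proposition 15), multiplies by the extra reflection, squares, and reads the result off in the basis $\mathcal{B}$, except that it treats the $x_{q_1}$ case in full where you treat the $t_{r_1}$ case. Your derivation of the half-turn's action from ``$w=-\mathrm{Id}$ on the reflection plane and $w(b)=b$'' is cleaner than the paper's appeal to an external matrix, and it correctly reproduces the formulas (4)--(6) of Section 5 (e.g. $z_1(a_1)=-a_1+\tfrac{2}{4-\gamma}b$); your values $t_{r_1}^2(a_2)=a_2-\tfrac{2\alpha}{4-\gamma}b$ and $t_{r_1}^2(a_3)=a_3-\tfrac{2\beta}{4-\gamma}b$ are right, and the third bullet follows.

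The gap is that for the first two bullets you stop exactly at the step that carries the content. You correctly identify the inputs ($v(a_2)=-a_2+\tfrac{2}{l+2}b$ for $v=(s_1s_3)^{q_1}$, via $b_2=\tfrac{4-\beta}{l+2}b$, and the analogue for $(s_1s_2)^{p_1}$), and you rightly flag the readout in the vectors $c_2=(-2,l)$, $c_3=(m,-2)$ as delicate --- but you must actually perform it. Doing so gives $x(a_2)=s_2\bigl(-a_2+\tfrac{2}{l+2}b\bigr)=a_2+\tfrac{2}{l+2}b$, hence $x_{q_1}^2(a_2)=a_2+\tfrac{4}{l+2}b$ and $x_{q_1}^2(a_3)=a_3-\tfrac{2l}{l+2}b$, i.e. $x_{q_1}^2=\bigl(\tfrac{4}{l+2},\tfrac{-2l}{l+2}\bigr)=-\tfrac{2}{l+2}c_2$, and similarly $y_{p_1}^2=-\tfrac{2}{m+2}c_3$: the \emph{negatives} (inverses in $N$) of the stated right-hand sides. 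The paper's own intermediate matrix for $x_{q_1}^2$ (whose middle entry $5$ forces $\lambda=\tfrac{4}{l+2}$) yields the same value, so the discrepancy lies in the Proposition's final line rather than in your method; still, as a proof of the statement as written your argument, once completed, does not land on it, and you would need either to correct the signs in the statement or to exhibit a compensating convention.
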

\begin{proof}
Nous ne montrons que la première équation, les autres se montrant de la même manière. Nous avons vu dans \cite{Z2}   proposition 15 que, dans la base $\mathcal{A}$ de $M$ on a:
\[
(s_{3}s_{1})^{q_{1}}=(s_{1}s_{3})^{q_{1}}=
\begin{pmatrix}
-1 & \frac{2(2\alpha+\beta m)}{4-\beta} & 0\\
0 & 1 & 0\\
0 & \frac{2(\alpha+2m)}{4-\beta} & -1
\end{pmatrix}
\]
\[
x_{q_{1}}=(x)=s_{2}(s_{3}s_{1})^{q_{1}}=
\begin{pmatrix}
1 & 0 & 0\\
1 & -1 & l\\
0 & 0 & 1
\end{pmatrix}
\begin{pmatrix}
-1 & \frac{2(2\alpha+\beta m)}{4-\beta} & 0\\
0 & 1 & 0\\
0 & \frac{2(\alpha+2m)}{4-\beta} & -1
\end{pmatrix}
=
\begin{pmatrix}
-1 &  \frac{2(2\alpha+\beta m)}{4-\beta} & 0\\
-1 & 3 & -l\\
0 & \frac{2(\alpha+2m)}{4-\beta} & -1
\end{pmatrix}
\]
\[
x_{q_{1}}^{2}=(s_{2}(s_{3}s_{1})^{q_{1}})^{2}=
\begin{pmatrix}
1- \frac{2(2\alpha+\beta m)}{4-\beta} &  \frac{4(2\alpha+\beta m)}{4-\beta} &  \frac{-2l(2\alpha+\beta m)}{4-\beta}\\
-2 & 5 & -2l\\
\frac{-2(\alpha+2m)}{4-\beta} & \frac{4(\alpha+2m)}{4-\beta} & 1-\frac{2l(\alpha+2m)}{4-\beta}
\end{pmatrix}
\]
Nous exprimons maintenant $x_{q_{1}}$ dans la base $\mathcal{B}$ et nous n'avons besoin que du coefficient de $b$ dans $x(a_{2})$ et $x(a_{3})$ sachant que 
\[
a_{1}=(\frac{1}{4-\gamma})b-(\frac{l+2}{4-\gamma})a_{2}-(\frac{m+2}{4-\gamma})a_{3}.
\]
Nous trouvons, en utilisant les relations $(\mathcal{T}), $ $x_{q_{1}}^{2}=\frac{2}{l+2}c_{2}$.
\end{proof}
\begin{proof}(du théorème)
Nous montrons que $(\mathcal{E})$ est équivalente à $t_{r_{1}}^{2}=\frac{-2}{4-\gamma}c_{3}$, les autres équivalences se montrant de la même manière. On suppose donc que $r=2r_{1}$. On a $t=(t_{r_{1}})=s_{1}(s_{2}s_{3})^{r_{1}}$ et $t'=(t_{r_{1}}')=s'_{1}(s'_{2}s'_{3})^{r_{1}}=s_{1}(\lambda_{1}c_{1})(s_{2}s_{3})^{r_{1}}b_{r_{1}}''$ avec $b_{r_{1}}''=(-2\lambda_{2},-2\lambda_{3})$. Comme $(s_{2}s_{3})^{r_{1}}$ opère comme $-1$ sur $N$, on a $\lambda_{1}c_{1}(s_{2}s_{3})^{r_{1}}=(s_{2}s_{3})^{r_{1}}(-\lambda_{1}c_{1})$ donc $t'=t(-\lambda_{1}c_{1}+b_{r_{1}}'')$.\\
On obtient:
\[
t'^{2}=t(-\lambda_{1}c_{1}+b_{r_{1}}'')s_{1}(s_{2}s_{3})^{r_{1}}(-\lambda_{1}c_{1}+b_{r_{1}}'').
\]
On a $s_{1}.b_{r_{1}}''=b_{r_{1}}''+\omega(b_{r_{1}}'')c_{1}$ avec $\omega(b_{r_{1}}'')=2(\frac{\lambda_{2}(l+2)+\lambda_{3}(m+2)}{4-\gamma})$ d'où

\begin{eqnarray*}
t'^{2} & = & ts_{1}(\lambda_{1}c_{1}+b_{r_{1}}''+\omega(b_{r_{1}}'')c_{1})(s_{2}s_{3})^{r_{1}}(-\lambda_{1}c_{1}+b_{r_{1}}'')\\
 & = & t^{2}(-\lambda_{1}c_{1}-b_{r_{1}}''-\omega(b_{r_{1}}'')c_{1}+\lambda_{1}c_{1}+b_{r_{1}}'')\\
 & = & t^{2}(-2\lambda_{1}-\omega(b_{r_{1}}'')c_{1})\\
 & = & (\frac{-2}{4-\gamma}-2\lambda_{1}-2(\frac{\lambda_{2}(l+2)+\lambda_{3}(m+2)}{4-\gamma}))c_{1}\\
 & = & -(\frac{2}{4-\gamma})[1+\lambda_{1}(4-\gamma)+\lambda_{2}(l+2)+\lambda_{3}(m+2)]c_{1}.
\end{eqnarray*}
On a alors $t_{r_{1}}'^{2}=1\Leftrightarrow 1+\lambda_{1}(4-\gamma)+\lambda_{2}(l+2)+\lambda_{3}(m+2)=0$.\\ C'est la relation $(\mathcal{E})$.
\end{proof}
Pour utiliser ce théorème, nous aurons besoin des résultats suivants pour lesquels on suppose que $N(G)\neq \{1\}$.
\begin{definition}
On définit $I_{j}:=\{\lambda | \lambda\in K, \lambda c_{j}\in[N,s_{j}]\}$ $1\leqslant j\leqslant 3$.
\end{definition}
On a alors la proposition:
\begin{proposition}
\begin{enumerate}
  \item $I_{1}$, $I_{2}$ et $I_{3}$ sont des $\mathcal{O}'(K)$-modules.
  \item On a les inclusions suivantes:
  \begin{itemize}
  \item $\alpha I_{1}\subset I_{2}\subset I_{1}$; $\beta I_{1}\subset I_{3}\subset I_{1}$.
  \item $lI_{2}\subset I_{3}; mI_{3}\subset I_{2}$.
\end{itemize}
  \item Si $\alpha$ (resp. $\beta$) est inversible, on a $I_{1}=I_{2}$ (resp. $I_{1}=I_{3}$); si $\gamma$ est inversible, on a $I_{2}=I_{3}$. Si deux parmi $\alpha$, $\beta$, $\gamma$  sont inversibles, on a $I_{1}=I_{2}=I_{3}=I$ et $I$ est un $\mathcal{O}(K)$-module.
\end{enumerate}
\end{proposition}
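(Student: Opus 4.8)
Le plan est de tout ramener aux applications $\psi_j$ introduites dans la démonstration du théorème 1.

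\medskip
\emph{Première étape (point 1).} L'application $\psi_j\colon \zeta\mapsto [s_j,\zeta]=s_j.\zeta-\zeta$ est l'endomorphisme $K$-linéaire $s_j-\mathrm{id}$ de $K^2$, d'image $Kc_j$, et comme $s_j^2=1$ on a $[N,s_j]=\psi_j(N)$. Puisque $N$ est un $\mathcal{O}'(K)$-module (théorème 1) et que $\psi_j$ est $K$-linéaire, $\psi_j(N)$ est un sous-$\mathcal{O}'(K)$-module de $Kc_j$; en transportant par l'isomorphisme $K\to Kc_j$, $\lambda\mapsto\lambda c_j$, on obtient que $I_j$ est un $\mathcal{O}'(K)$-module. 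On retiendra les descriptions concrètes $I_2=\{\lambda\mid(\lambda,\mu)\in N\}$, $I_3=\{\mu\mid(\lambda,\mu)\in N\}$ et $I_1=\omega(N)$.

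\medskip
\emph{Deuxième étape (point 2).} J'utilise les formules de commutateurs composés, en observant que $\psi_{ij}=\psi_i\circ\psi_j$ et que chaque $\psi_j$ stabilise $N$. Si $\lambda\in I_2$, on choisit $\zeta\in N$ avec $\psi_2(\zeta)=\lambda c_2$; alors $\psi_2(\zeta)\in N$ et $\psi_1(\psi_2(\zeta))=\psi_{12}(\zeta)=\lambda c_1\in\psi_1(N)$, d'où $\lambda\in I_1$: c'est $I_2\subset I_1$. De la même façon $\psi_{13}$ donne $I_3\subset I_1$, $\psi_{21}$ donne $\alpha I_1\subset I_2$, $\psi_{31}$ donne $\beta I_1\subset I_3$, $\psi_{32}$ donne $lI_2\subset I_3$ et $\psi_{23}$ donne $mI_3\subset I_2$. (En composant $I_2\subset I_1$ avec $\beta I_1\subset I_3$, et $I_3\subset I_1$ avec $\alpha I_1\subset I_2$, on obtient au passage $\beta I_2\subset I_3$ et $\alpha I_3\subset I_2$.)

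\medskip
\emph{Troisième étape (point 3).} Le principe commun est que la multiplication par un élément inversible de $\mathcal{O}'(K)$ est un automorphisme de tout $\mathcal{O}'(K)$-module. Si $\alpha$ est inversible, $\alpha I_1=I_1$, et $\alpha I_1\subset I_2\subset I_1$ force $I_1=I_2$; symétriquement, $\beta$ inversible donne $I_1=I_3$. Pour $\gamma$: comme $\alpha l$ et $\beta m$ sont les racines de $Q$, on a $(\alpha l)(\beta m)=\alpha\beta\gamma$, donc $lm=\gamma$; l'inversibilité de $\gamma=lm$ entraîne celle de $l$ et de $m$, et alors $lI_2=I_2\subset I_3$ et $mI_3=I_3\subset I_2$ donnent $I_2=I_3$. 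Enfin, si deux des trois paramètres sont inversibles, on combine les égalités précédentes pour obtenir $I_1=I_2=I_3=:I$, et (comme dans la remarque précédant le théorème 1) $\mathcal{O}'(K)=\mathcal{O}(K)$, si bien que $I$ est un $\mathcal{O}(K)$-module.

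\medskip
\emph{Principale difficulté.} Les étapes 1 et 2 sont des applications directes des formules $(\psi_{ij})$ et ne présentent pas d'obstacle. Le point délicat est le cas de $\gamma$ dans l'étape 3: il faut transformer l'inversibilité de $\gamma=lm$ en l'égalité effective $lI_2=I_2$ (et $mI_3=I_3$), alors que $l,m\notin\mathcal{O}'(K)$ a priori. C'est ici qu'interviennent de façon essentielle $\alpha l,\beta m\in\mathcal{O}'(K)$, la relation $(\alpha l)(\beta m)=\alpha\beta\gamma$ et l'hypothèse $\Delta=0$ (qui fournit $\alpha l+\beta m=2(4-\alpha-\beta-\gamma)$); je m'attends à ce que ce soit la seule étape demandant un argument arithmétique soigné, les inclusions dérivées $\alpha I_3\subset I_2$ et $mI_3\subset I_2$ pouvant servir à court-circuiter le recours direct à $l^{-1}$ et $m^{-1}$.
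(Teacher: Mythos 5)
Your steps 1 and 2 and the $\alpha$, $\beta$ halves of step 3 coincide with the paper's argument: the paper also deduces point 1 directly from Théorème 1, proves $I_2\subset I_1$ by applying $s_1-\mathrm{id}$ to $\lambda c_2\in N$ (which is exactly your $\psi_{12}(\zeta)=\lambda c_1$), and handles $\alpha$ invertible via a polynomial $P$ with $P(\alpha)\alpha=1$ and the chain $\alpha I_1\subset I_2\subset I_1$, leaving everything else to ``de la même manière''. So up to that point you have reproduced the proof, if anything with more detail than the original.

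The $\gamma$ case, however, contains a genuine gap, and your own ``principale difficulté'' paragraph does not close it. You write $lI_2=I_2\subset I_3$, but the only inclusion available is $lI_2\subset I_3$ (from $\psi_{32}$): multiplication by $l$ does not map $I_2$ into itself, so the equation $lI_2=I_2$ is not even of the right shape, and moreover $l\notin\mathcal{O}'(K)$ in general, so invertibility of $l$ in $\mathcal{O}(K)$ gives no control over the $\mathcal{O}'(K)$-module $I_2$. What the paper's $\alpha$-argument actually transposes to is this: from $\gamma I_2=m(lI_2)\subset mI_3\subset I_2$ and $P(\gamma)\gamma=1$ one gets $mI_3=I_2$, and symmetrically $lI_2=I_3$ --- i.e.\ $I_2$ and $I_3$ are carried onto each other by multiplication by $m$ and $l$, which is strictly weaker than $I_2=I_3$. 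To conclude you would need an honest inclusion $I_2\subset I_3$ or $I_3\subset I_2$ (for instance an element of $\mathcal{O}'(K)\beta+\mathcal{O}'(K)l$ equal to $1$, using $\beta I_2\subset I_3$ from $\psi_{312}$ together with $lI_2\subset I_3$), and neither you nor, frankly, the paper's one-line ``même manière'' supplies it. The same caveat affects your final sentence: the remark before Théorème 1 gives $\mathcal{O}'(K)=\mathcal{O}(K)$ only when $\alpha$ \emph{and} $\beta$ are invertible, so the case where the two invertible parameters are, say, $\alpha$ and $\gamma$ needs a separate justification that $I$ is an $\mathcal{O}(K)$-module.
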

\begin{proof}
Que $I_{j}$ $(1\leqslant j \leqslant 3)$ soit un $\mathcal{O}'(K)$-module est clair d'après le théorème 1.\\
Soit $\lambda\in I_{2}$. Alors $(s_{1}-id).\lambda c_{2}=\lambda c_{1}$, donc comme $\lambda c_{2}\in N(G)$, nous voyons que $\lambda c_{1}\in [N(G),s_{1}]$ et $\lambda\in I_{1}$: $I_{2}\subset I_{1}$. Les autres inclusions se montrent de la même manière.\\
Si $\alpha$ est inversible, il existe un polynôme $P(X)\in \mathbb{Z}[X]$ tel que $P(\alpha)\alpha=1$. Alors $I_{1}\subset\alpha^{-1}I_{2}\subset \alpha^{-1}I_{1}$, d'où $I_{1}\subset P(\alpha)I_{2}\subset P(\alpha)I_{1}\subset I_{1}$ et $I_{1}=P(\alpha) I_{2}=P(\alpha)I_{1}$, donc $\alpha I_{1}=\alpha P(\alpha)I_{1}=I_{1}$. Comme $\alpha I_{1}\subset I_{2}\subset I_{1}$, nous voyons que $I_{1}=I_{2}$. Les autres égalités se montrent de la même manière.
\end{proof}
\section{Propriétés de certains éléments de $\mathcal{Z}'$}.
Lorsque $n=3$, on voit tout de suite que si $s$ est une réflexion de $GL(M)$ et $z$ un élément de  $\mathcal{Z}'$ alors si $s$ et $z$ commutent $sz$ est encore une réflexion de $GL(M)$ (ceci n'est plus vrai si $n>3$) (pour la définition de $\mathcal{Z}'$ voir \cite{Z4} formules (5), (6) et (7)).

Pour éviter d'avoir à refaire souvent certains calculs, nous les donnons ici et le lecteur pourra sauter ce paragraphe et y revenir en cas de besoin.

On appelle $z_{i}$ l'unique élément de $\mathcal{Z}'$ qui centralise $s_{j}$ et $s_{k}$ si $|\{i,j,k\}|=3$ et l'on pose $s_{ij}:=s_{i}z_{j}$ $(1\leqslant i\neq j\leqslant3)$.\\
On a les résultats suivants qui ne présentent pas de difficultés à vérifier.
\begin{align}
z_{1}(a_{1}) & = & -a_{1}+\frac{2b}{4-\gamma} & , & z_{1}(a_{2}) & = & -a_{2} & , & z_{1}(a_{3}) & = & -a_{3}\\
 z_{2}(a_{2}) & = & -a_{2}+\frac{2b}{l+2} & , & z_{2}(a_{1}) & = & -a_{1} & , & z_{2}(a_{3}) & = & -a_{3}\\
 z_{3}(a_{3}) & = & -a_{3}+\frac{2b}{m+2} & , & z_{3}(a_{1}) & = & -a_{1} & , & z_{3}(a_{2}) & = & -a_{2}
\end{align}
On a $z_{1}z_{2}=(\frac{2}{l+2},0)$, $z_{1}z_{3}=(0,\frac{2}{m+2})$ et $z_{2}z_{3}=(\frac{-2}{l+2},\frac{2}{m+2})$ et aussi $(s_{1}z_{1})^{2}=(\frac{-2}{4-\gamma})c_{1}$, $(s_{2}z_{2})^{2}=(\frac{2}{l+2})c_{2}$ et $(s_{3}z_{3})^{2}=(\frac{2}{m+2})c_{3}$ (voir la proposition 4).
\begin{align}
s_{21}(a_{1})  &  = & a_{1}+(\frac{m+2}{4-\gamma})(la_{2}+2a_{3}) & , &s_{21}(a_{2}) & = & a_{2} & , & s_{21}(a_{3}) & = & a_{3}-(la_{2}+2a_{3})\\
s_{31}(a_{1})  &  = & a_{1}+(\frac{l+2}{4-\gamma})(2a_{2}+ma_{3}) & , &s_{31}(a_{3}) & = & a_{3} & , & s_{31}(a_{2}) & = & a_{2}-(2a_{2}+ma_{3}).
\end{align}
\[
\begin{matrix}
  C(s_{1},s_{21})=4-\alpha, &  C(s_{1},s_{31})=4-\beta, & C(s_{21},s_{31})=\gamma.   
\end{matrix}
\]
\begin{align}
s_{12}(a_{1}) & = & a_{1} & , & s_{12}(a_{3}) & = & a_{3}-(\beta a_{1}+2a_{3}) & , & s_{12}(a_{2}) & = & a_{2}+(\frac{m+2}{l+2})(\beta a_{1}+2a_{3}) \\
s_{32}(a_{2}) & = &  a_{2} & , & s_{32}(a_{1}) & = & a_{1}-(2a_{1}+a_{3}) & , & s_{32}(a_{2}) & = & a_{2}+(\frac{4-\gamma}{l+2})(2a_{1}+a_{3}).
\end{align}
\[
\begin{matrix}
C(s_{2},s_{12})=4-\alpha, & C(s_{2},s_{32})=4-\gamma, & C(s_{12},s_{32})=\beta.
\end{matrix}
\]
\begin{align}
s_{13}(a_{1})& = & a_{1}& ,  &  s_{13}(a_{2}) & = & a_{2}-(\alpha a_{1}+2a_{2})& , &s_{13}(a_{3}) & = & a_{3}+(\frac{l+2}{m+2})(\alpha a_{1}+2a_{2})
  \end{align}
  \begin{align}
s_{23}(a_{2}) & = & a_{2} & , & s_{23}(a_{1}) & = & a_{1}-(2a_{1}+a_{2}) & , & s_{23}(a_{3})+(\frac{4-\gamma}{m+2})(2a_{1}+a_{2}).    
\end{align}
\[
\begin{matrix}
C(s_{3},s_{13})=4-\beta, & C(s_{3},s_{23})=4-\gamma, & C(s_{13},s_{23})=\alpha.
\end{matrix}
\]
\[
\begin{matrix}
C(s_{21},s_{12})=\alpha, & C(s_{21},s_{32})=\gamma & C(s_{21},s_{13})=\alpha, & C(s_{21},s_{23}=4;\\
C(s_{31},s_{12})=\beta, & C(s_{31},s_{32})=4, & C(s_{31},s_{13})=\beta, & C(s_{31},s_{23})=\gamma;\\
C(s_{12},s_{32})=\beta, & C(s_{12},s_{13})=4, & C(s_{12},s_{23})=\alpha, & C(s_{32},s_{13})=\beta\\
C(s_{32},s_{23})=\gamma, & C(s_{13},s_{23})=\alpha. &&
\end{matrix}
\]
\begin{proposition}
Soient $W(p,q,r)$ un groupe de Coxeter de rang $3$ et $R:W\to GL(M)$ une représentation de réflexion réductible. Soit $z_{i}$ $(1\leqslant i \leqslant 3)$ l'élément de $\mathcal{Z}'$ qui centralise $s_{j}$ et $s_{k}$ si $|\{i,j,k\}|=3$ et qui fixe $b$. On pose $G:= Im R$ et $\Gamma_{i}:=<g,z_{i}>$. Alors:
\begin{enumerate}
  \item $\Gamma_{1}=\Gamma_{2}=\Gamma_{3}$.
  \item Si $N(G)\neq{1}$, on a:
  \begin{enumerate}
  \item si $G\cap\mathcal{Z}'\neq\emptyset$, $G=\Gamma$;
  \item si $G\cap\mathcal{Z}'=\emptyset$, $G$ est d'indice $2$ dans $\Gamma$.
\end{enumerate}
  \item Si $N(G)=\{1\}$, on a $\Gamma=N(\Gamma)G<z>$ et l'extension
  \[
  \{1\}\to N(\Gamma)\to \Gamma\to\Gamma/N(\Gamma) \to \{1\}
  \]
  est non scindée.
\end{enumerate}
\end{proposition}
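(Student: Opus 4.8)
Le fil conducteur que je suivrais est de tout rapporter au groupe $T$ des transvections de $GL(M)$ fixant $b$ et agissant trivialement sur $M'=M/C_{M}(G)$. On a $T\cong K^{2}$ via $\zeta\mapsto(\lambda,\mu)$, le groupe $G$ (et donc chaque $\Gamma_{i}:=\langle G,z_{i}\rangle$) agit sur $T$ par conjugaison suivant la représentation de réflexion irréductible de la proposition 1, et chaque $z_{i}$ est une involution fixant $b$ et induisant $-\mathrm{id}$ sur $M'$, de sorte que la conjugaison par $z_{i}$ vaut $-\mathrm{id}$ sur $T$. En posant $\widehat{N}_{i}:=\Gamma_{i}\cap T$, c'est un sous-groupe de $T$ stable par conjugaison contenant $N=G\cap T$, et l'image de $\Gamma_{i}$ dans $GL(M')$ est $\langle\overline{G},-\mathrm{id}\rangle$, indépendante de $i$. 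Les trois énoncés se lisent sur la suite
\[
\{1\}\longrightarrow\widehat{N}_{i}\longrightarrow\Gamma_{i}\longrightarrow\langle\overline{G},-\mathrm{id}\rangle\longrightarrow\{1\}.
\]

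Pour le point 1, comme $z_{i}$ centralise $s_{j}$ et $s_{k}$, le seul relèvement de relation produisant une transvection hors de $N$ est $[s_{i},z_{i}]=(s_{i}z_{i})^{2}$; autrement dit $\widehat{N}_{i}$ est le $G$-module engendré par $N$ et par $(s_{i}z_{i})^{2}$, dont on connaît les valeurs $\tfrac{-2}{4-\gamma}c_{1}$, $\tfrac{2}{l+2}c_{2}$, $\tfrac{2}{m+2}c_{3}$. Il suffit alors d'établir $z_{j}\in\Gamma_{i}$ pour tout couple, soit, puisque $z_{i}\in\Gamma_{i}$, que chacune des transvections $z_{1}z_{2}=(\tfrac{2}{l+2},0)$, $z_{1}z_{3}=(0,\tfrac{2}{m+2})$, $z_{2}z_{3}=(\tfrac{-2}{l+2},\tfrac{2}{m+2})$ appartient à $\widehat{N}_{i}$. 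Je vérifierais ces appartenances en appliquant au générateur $(s_{i}z_{i})^{2}$ les opérateurs $\psi_{i},\psi_{ij},\psi_{ijk}$ de la démonstration du théorème 1, puis en réduisant par les relations $(\mathcal{T})$ et par l'identité $\alpha(l+2)+\beta(m+2)=2(4-\gamma)$ (conséquence immédiate de $\Delta=0$); c'est cette vérification, montrant que les trois modules $\widehat{N}_{i}$ coïncident, qui me paraît l'obstacle principal du point 1. On pose ensuite $\Gamma:=\Gamma_{1}=\Gamma_{2}=\Gamma_{3}$.

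Pour le point 2, je distinguerais suivant que $-\mathrm{id}_{M'}$ appartient ou non à $\overline{G}$, dichotomie qui n'est autre que $G\cap\mathcal{Z}'\neq\emptyset$ contre $G\cap\mathcal{Z}'=\emptyset$. Si $G\cap\mathcal{Z}'\neq\emptyset$, la structure de $\mathcal{Z}'$ dégagée dans la partie 4 (ses éléments diffèrent d'un élément de $N$ dès que $N\neq\{1\}$) entraîne $z_{i}\in G$, d'où $\Gamma=G$. Si $G\cap\mathcal{Z}'=\emptyset$, je montrerais que $z_{i}$ normalise $G$ sans y appartenir: la normalisation revient à $(s_{i}z_{i})^{2}\in N$, ce qu'assurent $N\neq\{1\}$ et la structure de $\mathcal{O}'(K)$-module du théorème 1, et l'on obtient $\Gamma=G\sqcup Gz_{i}$, soit $[\Gamma:G]=2$.

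Pour le point 3, l'hypothèse $N(G)=\{1\}$ donne $\widehat{N}=N(\Gamma)$ égal au $G$-module engendré par les $(s_{i}z_{i})^{2}$; comme $4-\gamma\neq0$ et $c_{1}\neq0$, on a $(s_{1}z_{1})^{2}\neq1$, donc $N(\Gamma)\neq\{1\}$ et $\Gamma=N(\Gamma)\,G\,\langle z\rangle$. Pour la non-scission, je chercherais un relèvement involutif $\widehat{\epsilon}=z\eta$ (avec $\eta\in\widehat{N}$) de $-\mathrm{id}$ commutant aux relèvements des $\overline{s_{i}}$: la condition $\widehat{\epsilon}^{\,2}=1$ jointe à la compatibilité avec les $s_{i}$ se ramène, via les formules de Chebyshev des propositions 2 et 3 et les corollaires sur les ordres, à une relation du type $(\mathcal{E})$ portant sur les paramètres de $\eta$. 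L'hypothèse $N(G)=\{1\}$ confine alors $\eta$ à un module trop petit pour satisfaire cette relation, ce qui interdit la scission; cet argument d'obstruction, entièrement parallèle à celui du théorème 3, constitue l'obstacle principal du point 3.
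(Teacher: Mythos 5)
First, a remark on the comparison itself: the paper gives no argument here — its entire proof is the sentence \og C'est une conséquence immédiate de la proposition 4, du corollaire 2 de \cite{Z4} et de ce qui précède \fg{} — so there is no detailed route to measure yours against. Your set-up is the natural one and its first layer is correct: $T\cong K^{2}$, conjugation by each $z_{i}$ acts as $-\mathrm{id}$ on $T$ (since $z_{i}$ fixes $b$ and induces $-\mathrm{id}$ on $M'$), the images of the three $\Gamma_{i}$ in $GL(M')$ all equal $\langle\overline{G},-\mathrm{id}\rangle$, and $\Gamma_{i}\cap T$ is the $\mathbb{Z}G$-module generated by $N$ and $(s_{i}z_{i})^{2}$. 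Your reduction of point 1 to the memberships $z_{i}z_{j}\in\widehat{N}_{k}$ is therefore the right one.

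The gap is that these memberships are not, as you propose, obtainable from the operators $\psi_{i},\psi_{ij},\psi_{ijk}$ and the relations $(\mathcal{T})$: those manipulations only ever produce elements of the $\mathbb{Z}G$-module generated by $N$ and $\frac{-2}{4-\gamma}c_{1}$, and whether $z_{1}z_{2}=(\frac{2}{l+2},0)$ lies in that module is an arithmetic question about the size of $N$, not an identity in the parameters. Concretely, for $W(3,3,3)$ with its standard affine representation one has $\alpha=\beta=\gamma=l=m=1$, $N=\frac{1}{3}(\mathbb{Z}c_{2}+\mathbb{Z}c_{3})$ and $(s_{1}z_{1})^{2}=-\frac{2}{3}c_{1}=\frac{1}{3}(2c_{2}+2c_{3})\in N$, hence $\widehat{N}_{1}=N$; but $z_{1}z_{2}=(\frac{2}{3},0)\notin N$ (solving $(2,0)=ac_{2}+bc_{3}$ forces $b=-\frac{2}{3}$), i.e.\ $z_{1}z_{2}$ is the translation by $2\omega_{1}^{\vee}\notin Q^{\vee}$. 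So the verification you defer as \og l'obstacle principal du point 1 \fg{} does not close, and point 1 as literally stated even fails in this example; whatever rescues the proposition must come from the precise content of Corollary 2 of \cite{Z4} or from an implicit hypothesis such as $2\mid pqr$ (under which the relevant $z_{i}$ already lie in $G$ by Proposition 4), neither of which your sketch supplies. The same unproved membership ($z_{i}z_{j}\in N$) is used silently again in your point 2(a), and your point 3 is only a declaration of intent, so none of the three items is actually established.
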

\begin{proof}
C'est une conséquence immédiate de la proposition 4, du corollaire 2 de \cite{Z4} et de ce qui précède.
\end{proof}
\section{Un groupe de réflexion affine est de manière naturelle quotient de plusieurs groupes de Coxeter de rang $3$ lorsque $2|pqr$.}
Pour toute la suite les résultats suivants seront utiles. On garde les hypothèses et notations du début du chapitre.\\
Pour indiquer la dépendance du polynôme $Q(X)$ par rapport à $\alpha$, $\beta$, $\gamma$ nous le noterons $Q(X):=Q(\alpha,\beta,\gamma)(X)$.
\begin{proposition}
On suppose que $\Delta=0$ et que les racines de $Q(\alpha,\beta,\gamma)(X)$ sont $\alpha l$ et $\beta m$. Alors:
\begin{enumerate}
  \item les racines de $Q(\alpha',\beta',\gamma)(X)$ sont $-l(\alpha+2m)$ et $-m(\beta+2l)$;
  \item les racines de $Q(\alpha',\beta,\gamma')(X)$ sont $-\beta(m+2)$ et $(-2\beta+\alpha l)$;
  \item les racines de $Q(\alpha,\beta',\gamma')(X)$ sont $-\alpha(l+2)$ et $(-2\alpha+\beta m)$.
\end{enumerate}
(On rappelle que $\alpha'=4-\alpha$, $\beta'=4-\beta$ et $\gamma'=4-\gamma$).
\end{proposition}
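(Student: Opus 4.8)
The plan is to verify the three assertions by Vieta's relations: to show that two given elements $u,v$ are the roots of $Q(a,b,c)(X)=X^{2}-2(4-a-b-c)X+abc$ it suffices to check the two scalar identities $u+v=2(4-a-b-c)$ and $uv=abc$, since a monic quadratic is determined by the sum and the product of its roots. In this way no actual resolution of the quadratics (and no appeal to $\sqrt{\theta}$) is needed; each case becomes a pair of elementary identities.

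Before treating the three cases I would record the two relations supplied by the standing hypothesis $\Delta=0$. First, $\Delta=0$ says exactly that the sum of the roots $\alpha l,\beta m$ of $Q(\alpha,\beta,\gamma)$ equals $2(4-\alpha-\beta-\gamma)$, i.e. $\alpha l+\beta m=2(4-\alpha-\beta-\gamma)$. Second, equating the product of these roots with the constant term of $Q(\alpha,\beta,\gamma)$ gives $\alpha l\cdot\beta m=\alpha\beta\gamma$, whence $lm=\gamma$ since $\alpha\beta\neq0$ (recall $0<\alpha,\beta<4$). Every sum computation below is handled by these two identities: case 1 uses both, while cases 2 and 3 need only $\alpha l+\beta m=2(4-\alpha-\beta-\gamma)$.

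For the products I would call on the nine relations $(\mathcal{T})$, each of which is equivalent to $\Delta=0$. The mechanism is that, after substituting $\alpha'=4-\alpha$, $\beta'=4-\beta$, $\gamma'=4-\gamma$ and (in case 1) $lm=\gamma$, the product of the two proposed roots becomes, up to an overall sign, exactly one of these relations: case 1 reduces to $(\alpha+2m)(\beta+2l)=(4-\alpha)(4-\beta)$, case 2 to $(m+2)(2\beta+\alpha l)=(4-\alpha)(4-\gamma)$, and case 3 to $(l+2)(2\alpha+\beta m)=(4-\beta)(4-\gamma)$, all three of which appear verbatim among the relations $(\mathcal{T})$. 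Thus each product identity is available at no cost once the case is paired with the right relation.

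Concretely, in each case I substitute the shifted triple into $4-a-b-c$, expand the sum of the two proposed roots into the form $-(\alpha l+\beta m)-4lm$ (case 1), $-(\alpha l+\beta m)-4\beta$ (case 2) or $-(\alpha l+\beta m)-4\alpha$ (case 3), and replace $\alpha l+\beta m$ and $lm$ by their values; the prefactor $2$ reappears, confirming the sum relation. The product is then matched to the appropriate $(\mathcal{T})$ relation. The only point requiring attention is the sign of the second root in cases 2 and 3: it must be read as $-(2\beta+\alpha l)$ and $-(2\alpha+\beta m)$ respectively, so that the product comes out as the positive quantity $abc=\alpha'\beta\gamma'$ (resp. $\alpha\beta'\gamma'$) demanded by Vieta. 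I do not anticipate any genuine obstacle beyond this sign bookkeeping and the correct pairing of each case with its $(\mathcal{T})$ relation.
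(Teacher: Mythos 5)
Your proof is correct and is essentially the computation the paper leaves implicit: the paper's entire proof reads ``un calcul facile en utilisant le fait que $\Delta=0$ et que $lm=\gamma$'', and your Vieta argument --- sums handled by $\alpha l+\beta m=2(4-\alpha-\beta-\gamma)$ and $lm=\gamma$, products matched to the appropriate relations $(\mathcal{T})$ (each equivalent to $\Delta=0$) --- is exactly that calculation made explicit. Your observation that the second roots in cases 2 and 3 must be read as $-(2\beta+\alpha l)$ and $-(2\alpha+\beta m)$ is a correct and necessary sign correction, confirmed by the parameter systems listed later in the paper's Notation 2.
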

\begin{proof}
Un calcul facile en utilisant le fait que $\Delta=0$ et que $lm=\gamma$.
\end{proof}
\begin{proposition}
Soit $G:=<a,b|a^{2}=b^{2}=(ab)^{n}=1>$ un groupe diédral avec $n\geqslant 3$. Si $n$ est pair, on pose $z:=(ab)^{n_{2}}$; si $n$ est impair, soit $<z>$ un groupe d'ordre $2$ qui centralise $G$, dans ce cas, on pose $\Gamma:=G<z>$ C'est un groupe diédral d'ordre $4n$. Alors:
\begin{enumerate}
  \item Si $n\equiv 0 \pmod{4}$, on a $G=<az,bz>=<a,bz>=<az,b>$.
  \item Si $n\equiv 2 \pmod{2}$, on a $G=<az,bz>$ et les sous-groupes $<a,bz>$ et $<az,b>$ sont d'indice $2$ dans $G$. De plus $G=<az,b>\times<z>\\=<a,bz>,\times<z>$
  \item Si $n$ est impair, on a $\Gamma=<a,bz>=<az,b>$, le sous-groupe $<az,bz>$ est d'indice $2$ dans $\Gamma$ et $\Gamma=<az,bz> \times<z>$.
\end{enumerate}
\end{proposition}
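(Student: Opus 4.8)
The plan is to reduce every assertion to an order computation in the cyclic rotation subgroup $\langle ab\rangle$, exploiting the single structural fact that $z$ is a central involution. First I would record the elementary identities that drive the argument: since $z^{2}=1$ and $z$ is central, the elements $az$ and $bz$ are again involutions ($(az)^{2}=a^{2}z^{2}=1$, and likewise for $bz$), while $(az)(bz)=abz^{2}=ab$ and $(a)(bz)=(az)(b)=(ab)z$. Hence each of the three groups $\langle az,bz\rangle$, $\langle a,bz\rangle$, $\langle az,b\rangle$ is dihedral, generated by two involutions, and the order of the relevant product---either $ab$ or $(ab)z$---determines its size. Since a dihedral group of order $2n$ is generated by its rotation subgroup of order $n$ together with any one reflection, it suffices to compute the order of that product and to exhibit one reflection among the chosen generators.

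Second, I would split according to the parity of $n$, because the location of $z$ differs. When $n$ is even, $z=(ab)^{n/2}$ lies in $G$, so $az,bz\in G$ and $(ab)z=(ab)^{n/2+1}$. The order of this rotation is $n/\gcd(n,\tfrac n2+1)$, and the arithmetic identity $\gcd(n,\tfrac n2+1)=\gcd(2,\tfrac n2+1)$ is the crux: it equals $1$ when $n\equiv0\pmod4$ and $2$ when $n\equiv2\pmod4$. In the first case $(ab)z$ still generates the full rotation subgroup, so $\langle a,bz\rangle=\langle az,b\rangle=\langle az,bz\rangle=G$, giving statement 1. In the second case $(ab)z$ has order $n/2$, so $\langle a,bz\rangle$ and $\langle az,b\rangle$ have order $n$ and are of index $2$ in $G$, whereas $\langle az,bz\rangle=G$ because $(az)(bz)=ab$ still has order $n$; this is the first half of statement 2.

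Third, for the direct-product assertions I would use that $z$ is central, so every index-$2$ subgroup in sight is normal and $\langle z\rangle$ is normal; it then suffices to check trivial intersection and to count orders. In statement 2 the rotation part of $\langle a,bz\rangle$ is $\langle(ab)^{2}\rangle$ of order $n/2$, consisting of even powers of $ab$; since $n\equiv2\pmod4$ forces $n/2$ odd, the central element $z=(ab)^{n/2}$ is an odd power and hence lies outside $\langle(ab)^{2}\rangle$, so $z\notin\langle a,bz\rangle$. Thus $\langle a,bz\rangle\cap\langle z\rangle=\{1\}$ and the orders multiply to $2n=|G|$, giving $G=\langle a,bz\rangle\times\langle z\rangle$, and symmetrically for $\langle az,b\rangle$. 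For $n$ odd (statement 3) $z$ is external and $G\cap\langle z\rangle=\{1\}$ in $\Gamma=G\times\langle z\rangle$; here $(ab)z$ has order $\mathrm{lcm}(n,2)=2n$, so $\langle a,bz\rangle=\langle az,b\rangle=\Gamma$, while $\langle az,bz\rangle$ has order $2n$ and cannot contain $z$, since each of its elements is either a power of $ab$ lying in $G$ or of the form $a(ab)^{k}z$, and neither can equal $z$; this yields $\Gamma=\langle az,bz\rangle\times\langle z\rangle$.

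I expect the only genuine obstacle to be the bookkeeping: keeping the two roles of $z$ straight (internal when $n$ is even, external when $n$ is odd) and handling the transition $\gcd(n,\tfrac n2+1)=\gcd(2,\tfrac n2+1)$ that separates the cases $n\equiv0$ and $n\equiv2\pmod4$. Once these are fixed, each individual verification collapses to a one-line order count.
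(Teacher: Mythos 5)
Your proof is correct and complete. Note that the paper itself leaves the proof environment for this proposition empty (the author evidently regards it as an elementary exercise on dihedral groups), so there is no argument in the source to compare yours against; your write-up actually supplies the missing details. The key points are all in order: $az$ and $bz$ are involutions because $z$ is a central involution, so each of the three subgroups is dihedral with order governed by the order of the product of its two generators; the computation $\gcd(2m,m+1)=\gcd(2,m+1)$ correctly separates the cases $n\equiv 0$ and $n\equiv 2\pmod 4$ (the statement's ``$n\equiv 2\pmod 2$'' is a typo for $\pmod 4$, which you rightly read that way); and the direct-product claims follow from normality, the order count, and the verification that $z$ lies outside the relevant subgroup (an odd power of $ab$ versus the even-power rotation subgroup $\langle (ab)^{2}\rangle$ when $n\equiv 2\pmod 4$, and $z\notin G$ when $n$ is odd). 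One small point you use implicitly and could state explicitly: for $n$ odd, $(ab)z$ has order $2n$ because $ab$ and $z$ commute and generate subgroups with trivial intersection inside $\Gamma=G\times\langle z\rangle$, which is what makes $\Gamma$ dihedral of order $4n$ as asserted.
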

\begin{proof}

\end{proof}
\begin{notation}
Pour la suite nous introduisons des notations: si $G$ est un groupe de réflexion affine de rang $3$, nous appelons $\mathcal{A}=(a_{1},a_{2},a_{3})$ une base adaptée, $\mathcal{P}(G)=\mathcal{P}(\alpha,\beta,\gamma;\alpha l(G),\beta m(G))$ son système de paramètres et $G=<s_{1},s_{2},s_{3}>$.
\begin{enumerate}
  \item Soit $z_{1}$ l'unique élément de $\mathcal{Z}'$ qui centralise $s_{2}$ et $s_{3}$. Nous considérons les trois groupes suivants:
  \begin{enumerate}
  \item $G_{1}':=<s_{1},s_{21},s_{31}>$, base adaptée: $\mathcal{A}(G_{1}')=(a_{1},(\frac{m+2}{4-\gamma})(la_{2}+2a_{3}),(\frac{l+2}{4-\gamma})(2a_{2}+ma_{3}))$ et le système de paramètres:\\ $\mathcal{P}(G_{1}')=\mathcal{P}(\alpha',\beta',\gamma;-m(\beta+2l),-l(\alpha+2m))$.
  \item $\Gamma_{1}:=<s_{1},s_{2},s_{31}>$, base adaptée: $\mathcal{A}(\Gamma_{1})=(a_{1},a_{2},(\frac{l+2}{4-\gamma})(2a_{2}+ma_{3}))$ et le sytème de paramètres: $\mathcal{P}(\Gamma_{1})=\mathcal{P}(\alpha,\beta',\gamma';-\alpha(l+2),-(2\alpha+\beta m))$.
  \item $\Gamma_{1}':=<s_{1},s_{21},s_{3}>$, base adaptée: $\mathcal{A}(\Gamma_{1}')=(a_{1},(\frac{m+2}{4-\gamma})(la_{2}+2a_{3}),a_{3})$ et le sytème de paramètres: $\mathcal{P}(\Gamma_{1}')=\mathcal{P}(\alpha',\beta,\gamma';-(2\beta+\alpha l),-\beta(m+2))$.
\end{enumerate}
  \item Soit $z_{2}$ l'unique élément de $\mathcal{Z}'$ qui centralise $s_{1}$ et $s_{3}$. Nous considérons les trois groupes suivants:
  \begin{enumerate}
  \item $G_{2}':=<s_{12},s_{2},s_{32}>$, base adaptée: $\mathcal{A}(G_{2}')=(\beta a_{1}+2a_{3},(\beta+2l)a_{2},-\beta(2a_{1}+a_{3}))$ et le système de paramètres:\\ $\mathcal{P}(G_{2}')=\mathcal{P}(\alpha',\beta,\gamma';-\beta (m+2),-(\alpha l+2\beta))$.
  \item $\Gamma_{2}:=<s_{1},s_{2},s_{32}>$, base adaptée: $\mathcal{A}(\Gamma_{2})=(a_{1},a_{2},-(2a_{1}+a_{3}))$ et le sytème de paramètres: $\mathcal{P}(\Gamma_{2})=\mathcal{P}(\alpha,\beta',\gamma';-\alpha(l+2),-(2\alpha+\beta m))$.
  \item $\Gamma_{2}':=<s_{12},s_{2},s_{3}>$, base adaptée:\\ $\mathcal{A}(\Gamma_{2}')=(\beta a_{1}+2a_{3},(\beta+2l)a_{2},-(4-\beta)a_{3})$ et le sytème de paramètres: $\mathcal{P}(\Gamma_{2}')=\mathcal{P}(\alpha',\beta',\gamma;-l(\alpha +2m),-m(\beta+2l))$.
\end{enumerate}
  \item Soit $z_{3}$ l'unique élément de $\mathcal{Z}'$ qui centralise $s_{1}$ et $s_{2}$. Nous considérons les trois groupes suivants:
  \begin{enumerate}
  \item $G_{3}':=<s_{13},s_{23},s_{3}>$, base adaptée:\\ $\mathcal{A}(G_{3}')=(\alpha a_{1}+2a_{2},-\alpha(2a_{1}+a_{2}),(\alpha+2m)a_{3})$ et le système de paramètres: $\mathcal{P}(G_{3}')=\mathcal{P}(\alpha,\beta',\gamma';-(2\alpha+\beta m),-\alpha (l+2))$.
  \item $\Gamma_{3}:=<s_{1},s_{23},s_{3}>$, base adaptée: $\mathcal{A}(\Gamma_{3})=(a_{1},-(2a_{1}+a_{2}),a_{3})$ et le système de paramètres:\\ $\mathcal{P}(\Gamma_{3})= \mathcal{P}(\alpha',\beta,\gamma';-(2\beta+\alpha l),-\beta(m+2))$. 
  \item $\Gamma_{3}':=<s_{13},s_{2},s_{3}>$, base adaptée:\\ $\mathcal{A}(\Gamma_{3}')=(\alpha a_{1}+2a_{2},-(4-\alpha)a_{2}),(\alpha+2m)a_{3})$ et le système de paramètres: $\mathcal{P}(\Gamma_{3}')= \mathcal{P}(\alpha',\beta',\gamma;-l(2m+\alpha),-m(\beta+2l))$
\end{enumerate}
\end{enumerate}
\end{notation}
\begin{proposition}
Nous gardons les notations précédentes. Le groupe $G$ opère comme un groupe de réflexion sur l'espace $M^{*}$ dual de l'espace $M$. Pour cette opération de $G$, le système de paramètres est $\mathcal{P}^{*}(G)=\mathcal{P}^{*}(\alpha,\beta,\gamma;\beta m,\alpha l)$.
\end{proposition}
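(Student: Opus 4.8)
The plan is to identify the action of $G$ on $M^{*}$ with the contragredient representation, to check that its Cartan matrix is the transpose of the one on $M$, and then to read off the parameter system; the announced interchange $\alpha l\leftrightarrow\beta m$ will come out of renormalising the dual roots into an adapted basis.

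First I recall that the representation of $G$ on $M^{*}$ which is a group homomorphism is $g\mapsto {}^{t}(g^{-1})$. Since each $s_{i}$ is an involution, the matrix of $s_{i}$ acting on $M^{*}$, written in the basis $(a_{1}^{*},a_{2}^{*},a_{3}^{*})$ dual to $\mathcal{A}$, is simply the transpose $S_{i}^{t}$ of the matrix $S_{i}$ of $s_{i}$ on $M$. As $S_{i}$ is a reflection (an involution whose fixed space is a hyperplane), so is $S_{i}^{t}$; hence $G$ does operate on $M^{*}$ as a reflection group, and it only remains to compute its parameters. Writing $C:=C(R(\alpha,\beta,\gamma;l))$ for the Cartan matrix appearing in the formula for $T^{-1}$ and $L_{i}$ for its $i$-th row, one has $S_{i}=I_{3}-e_{i}L_{i}$, whence $S_{i}^{t}=I_{3}-{}^{t}L_{i}\,{}^{t}e_{i}$.

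The natural choice of $i$-th dual root is $r_{i}^{*}:=\sum_{k}C_{ik}a_{k}^{*}$, i.e. the $i$-th row of $C$ read in the dual basis (geometrically the linear form defining $H(s_{i})$); note also that $s_{i}^{*}$ fixes $a_{j}^{*}$ for $j\neq i$. A one-line computation with $S_{i}^{t}=I_{3}-{}^{t}L_{i}\,{}^{t}e_{i}$ gives $s_{i}^{*}(r_{j}^{*})=r_{j}^{*}-C_{ji}\,r_{i}^{*}$, so that in the basis $(r_{1}^{*},r_{2}^{*},r_{3}^{*})$ the Cartan matrix of the dual representation is exactly ${}^{t}C$. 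In particular the pairwise products are unchanged, $C^{*}(s_{i}^{*},s_{j}^{*})=C_{ji}C_{ij}=C(s_{i},s_{j})$, so the first three parameters stay $\alpha,\beta,\gamma$; and since the two cyclic products are merely interchanged, $\alpha l+\beta m$ is preserved, the dual still satisfies $\Delta=0$ and is reducible.

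The only remaining point is to match ${}^{t}C$ with the adapted normalisation. The matrix
\[
{}^{t}C=\begin{pmatrix}2&-1&-1\\-\alpha&2&-m\\-\beta&-l&2\end{pmatrix}
\]
has first-column subdiagonal entries $-\alpha,-\beta$ instead of $-1$, so I rescale the dual roots by $(1,\alpha,\beta)$, a diagonal conjugation which preserves every pairwise product $C_{ij}C_{ji}$ and every cyclic product $C_{12}C_{23}C_{31}$, $C_{21}C_{32}C_{13}$. After this rescaling the first-column subdiagonal entries become $-1$, the $(1,2)$- and $(1,3)$-entries stay $-\alpha,-\beta$, and the $(2,3)$- and $(3,2)$-entries become $-\beta m/\alpha$ and $-\alpha l/\beta$; so the dual plays the roles $l^{*}=\beta m/\alpha$, $m^{*}=\alpha l/\beta$. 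One checks $l^{*}m^{*}=lm=\gamma$, and the refined parameters are $\alpha l^{*}=\beta m$ and $\beta m^{*}=\alpha l$, giving $\mathcal{P}^{*}(G)=\mathcal{P}^{*}(\alpha,\beta,\gamma;\beta m,\alpha l)$, as claimed. The main obstacle is not the linear algebra, since the transposition is immediate, but the bookkeeping of this last step: one must pick the rescaling that reproduces exactly the author's adapted-basis convention and confirm that $\gamma$ is preserved while $\alpha l$ and $\beta m$ are interchanged.
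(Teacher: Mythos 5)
Your proof is correct and follows essentially the same route as the paper: pass to the transposed matrices on $M^{*}$, observe that the Cartan matrix attached to the dual roots is ${}^{t}C$, and renormalise by the diagonal $(1,\alpha,\beta)$ (this is exactly the paper's substitution $e_{2}':=\alpha e_{2}$, $e_{3}':=\beta e_{3}$) to read off $\mathcal{P}^{*}(\alpha,\beta,\gamma;\beta m,\alpha l)$. One small imprecision: since $\det C=\Delta=0$ in the reducible case, the dual roots $r_{1}^{*},r_{2}^{*},r_{3}^{*}$ are linearly dependent and do not form a basis of $M^{*}$ (they span only the annihilator of $C_{M}(G)$), so the phrase ``in the basis $(r_{1}^{*},r_{2}^{*},r_{3}^{*})$'' should be avoided; this does not affect your computation, because each Cartan number is already determined by $s_{i}^{*}(r_{j}^{*})-r_{j}^{*}\in\langle r_{i}^{*}\rangle$ with $r_{i}^{*}\neq 0$, and the rescaling argument goes through verbatim.
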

\begin{proof}
On sait que dans la base duale $\mathcal{A}^{*}$ de la base $\mathcal{A}$, les matrices des éléments de $G$ sont les transposées des matrices dans la base $\mathcal{A}$. $G$ opère donc comme un groupe de réflexion sur $M^{*}$ et en appelant $(e_{1},e_{2},e_{3})$ la base base duale de $\mathcal{A}$ $(e_{i}(a_{j})=\delta_{ij})$, on a:
\[
\begin{array}{c|ccc}
& s_{1}.e_{1} & = & -e_{1}\\
& s_{1}.e_{2} & = & e_{2}+ e_{1}\\
& s_{1}.e_{3} & = & e_{3}+ e_{1}
\end{array}
,
\begin{array}{c|ccc}
& s_{2}.e_{1} & = & e_{1}+ \alpha e_{2}\\
& s_{2}.e_{2} & = & -e_{2}\\
& s_{2}.e_{3} & = & e_{3}+ me_{2}
\end{array}
,
\begin{array}{c|ccc}
& s_{3}.e_{1} & = & e_{1}+ \beta e_{3}\\
& s_{3}.e_{2} & = & e_{2}+ le_{3}\\
& s_{3}.e_{3} & = & -e_{3}
\end{array}
.
\]
Posons $e_{2}':=s_{2}(e_{1})-e_{1}=\alpha e_{2}$ et $e_{3}':=s_{3}(e_{1})-e_{1}=\beta e_{2}$. Nous avons $s_{2}(e_{3}')=\beta me_{2}+\beta e_{3}=\frac{\beta m}{\alpha}e_{2}'+e_{3}'$ et $s_{3}(e_{3}')=\alpha e_{2}+\alpha le_{3}=e_{2}'+\frac{\alpha l}{\beta}e_{3}'$ donc $\mathcal{P}^{*}(G)=\mathcal{P}^{*}(\alpha,\beta,\gamma;\beta m,\alpha l)$.
\end{proof}
\begin{theorem}
Soit $W(p,q,r)$ un groupe de Coxeter de rang $3$. On suppose que $2|pqr$. Soit $R:W(p,q,r)\to GL(M)$ l'une de ses représentations de réflexion affine avec $G:=Im R$ et le système de paramètres $\mathcal{P}(G)=\mathcal{P}(\alpha,\beta,\gamma;\alpha l, \beta m)$. Alors le groupe $G$ est aussi groupe de réflexion affine quotient d'un des groupes de Coxeter suivant:
\begin{enumerate}
  \item $W(p',q',r)$ avec $\mathcal{P_{1}}(G)=\mathcal{P_{1}}(\alpha',\beta',\gamma;-m(\beta+2l),-l(\alpha+2m))$.
  \item $W(p',q,r')$ avec $\mathcal{P_{2}}(G)=\mathcal{P_{2}}(\alpha',\beta,\gamma';-\beta (m+2),-(\alpha l+2\beta))$.
  \item $W(p,q',r')$ avec $\mathcal{P}(G)=\mathcal{P}(\alpha,\beta',\gamma';-\alpha(l+2),-(2\alpha+\beta m))$.
\end{enumerate}
\end{theorem}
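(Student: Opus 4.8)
The plan is to realise $G$ through the twisted reflections $s_{ij}=s_iz_j$ of the preceding notation. Since $2\mid pqr$, at least one of $p,q,r$ is even; say $r=2r_1$ (the cases $q$ even and $p$ even are symmetric and lead respectively to $G_2'$ and $G_3'$ instead of $G_1'$). I will show that $G_1'=\langle s_1,s_{21},s_{31}\rangle$ equals $G$ and is a reflection quotient of $W(p',q',r)$ with the stated parameters; all the Cartan coefficients and the system $\mathcal P(G_1')$ are already recorded in Section~5 and in the Notation, so the work is concentrated in two membership statements about the element $z_1$.

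The crucial object is the half-turn $h:=(s_2s_3)^{r_1}$. By Proposition~1 the action of $G$ on the two-dimensional space $M'=M/C_M(G)$ is an irreducible reflection representation, so $s_2s_3$ acts on $M'$ as a rotation of order $r$; hence $h$ acts as $-\mathrm{id}$ on $M'$ and fixes $b$, which places it in $\mathcal Z'$. Being a word in $s_2,s_3$, it also lies in $G$, so $G\cap\mathcal Z'\neq\emptyset$ and Proposition~6(2a) gives $G=\langle G,z_1\rangle$, i.e.\ $z_1\in G$. In particular the twisted reflections $s_{21}=s_2z_1$ and $s_{31}=s_3z_1$ (reflections by the rank-$3$ remark opening Section~5, since $z_1$ centralises $s_2$ and $s_3$) lie in $G$, whence $G_1'\subseteq G$.

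For the reverse inclusion, a direct computation using $z_1^2=1$ and $[z_1,s_2]=[z_1,s_3]=1$ gives $s_{21}s_{31}=s_2z_1\,s_3z_1=s_2s_3$, so the very same half-turn satisfies $h=(s_{21}s_{31})^{r_1}\in G_1'$. As $z_1$ also centralises $s_{21}$ and $s_{31}$, it is the element of $\mathcal Z'$ attached to this pair for $G_1'$, and $h\in G_1'\cap\mathcal Z'$. Applying Proposition~6(2a) to the affine reflection group $G_1'$ now yields $z_1\in G_1'$, whence $s_2=s_{21}z_1$ and $s_3=s_{31}z_1$ lie in $G_1'$ and $G=\langle s_1,s_2,s_3\rangle\subseteq G_1'$. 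Thus $G_1'=G$; the two remaining cases are identical after permuting $1,2,3$, which is exactly why the hypothesis $2\mid pqr$ is used to single out the correct one among $G_1',G_2',G_3'$.

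Finally the Coxeter data are read off directly. The tables of Section~5 give $C(s_1,s_{21})=4-\alpha$, $C(s_1,s_{31})=4-\beta$ and $C(s_{21},s_{31})=\gamma$, so $s_1s_{21},\,s_1s_{31},\,s_{21}s_{31}$ have orders $p',q',r$ and $G$ is a reflection quotient of $W(p',q',r)$ (a label equal to $2$ passes to $\infty$, harmlessly, as the corresponding product then has infinite order in the affine group $G$). The parameter system is the one recorded in the Notation, $\mathcal P(G_1')=\mathcal P(\alpha',\beta',\gamma;-m(\beta+2l),-l(\alpha+2m))$, and Proposition~7 shows that its two trailing entries are precisely the roots of $Q(\alpha',\beta',\gamma)$, so the new system again satisfies $\Delta=0$ and its representation is reducible, i.e.\ affine. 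The main obstacle is the double use of Proposition~6 around the single half-turn $h$: it is what forces $z_1$ into $G$ and then into $G_1'$, thereby promoting the easy inclusion $G_1'\subseteq G$ to the desired equality; everything else is bookkeeping with the identities of Sections~2 and~5.
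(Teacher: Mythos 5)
Your argument is correct in substance and follows the same strategy as the paper: both proofs rest on the fact that for $r=2r_1$ the element $z_1=(s_2s_3)^{r_1}$ lies in $G$, so that replacing $s_2,s_3$ by $s_{21}=s_2z_1,\ s_{31}=s_3z_1$ does not change the group, and the new Cartan data $(\alpha',\beta',\gamma)$ recorded in Notation~2 (with Proposition~7 guaranteeing that the new parameter system still has $\Delta=0$) exhibit $G$ as an affine quotient of $W(p',q',r)$. The difference is purely in which auxiliary result carries the group-theoretic step. The paper identifies $z_1$ with $(s_2s_3)^{r/2}$ outright and quotes the dihedral Proposition~8, which gives $\langle s_2,s_3\rangle=\langle s_2z_1,s_3z_1\rangle$ in one line. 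You instead route both inclusions through Proposition~6(2a), and this is where I would push back: the second application, to $G_1'$, requires $N(G_1')\neq\{1\}$, which you have not established at that stage (you only know $G_1'\subseteq G$, and a priori a reducible reflection subgroup could have trivial translation part). The detour is also unnecessary: once you observe that $h=(s_2s_3)^{r_1}$ fixes $b$, centralises $s_2$ and $s_3$ and acts as $-\mathrm{id}$ on $M'$, it \emph{is} the element $z_1$ of $\mathcal Z'$ by uniqueness, and then your own computation $s_{21}s_{31}=s_2s_3$ gives $z_1=(s_{21}s_{31})^{r_1}\in G_1'$, hence $s_2=s_{21}z_1$ and $s_3=s_{31}z_1$ lie in $G_1'$, with no appeal to Proposition~6 at all --- this is exactly the content of Proposition~8 for even $n$. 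With that one-line identification made explicit, your proof coincides with the paper's; everything else (the Cartan coefficients from Section~5, the parameter systems, and the symmetry among the three cases according to which of $p,q,r$ is even) matches the paper's bookkeeping.
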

\begin{proof}
On peut supposer par exemple que$2|r$. Soit $z_{1}=(s_{2}s_{3})^{\frac{r}{2}}$. D'après la proposition 8, on a $<s_{2},s_{3}>=<s_{2}z_{1},s_{3}z_{1}>$ donc $G=<s_{1},s_{21},s_{31}>$   et $\mathcal{P_{1}}(G)=\mathcal{P_{1}}(\alpha',\beta',\gamma;-m(\beta+2l),-l(\alpha+2m))$ d'après le résultat 1. (a) plus haut. On fait de même avec $z_{2}$ et $z_{3}$ pour avoir le résultat.
\end{proof}
\section{Quelques présentations des groupes de réflexion affines de rang $3$ lorsque $2|pqr$. }
Nous montrons dans ce paragraphe que pour tout groupe de Coxeter de rang $3$, $W(p,q,r)$, avec $2|pqr$, il existe une représentation de réflexion affine ``universelle'' en ce sens que toute représentation de réflexion affine de $W(p,q,r)$ est quotient de celle-là.
\begin{proposition}
Soient $W(p,q,r)$ un groupe de Coxeter de rang $3$ et $R:W(p,q,r)\to GL(M)$ une représentation de réflexion de ce groupe. on pose: $G:=Im R$. On suppose que $p=2p_{1}$, $q=2q_{1}$ et $r=2r_{1}$. On a l'équivalence des conditions (A) et (B):
\begin{itemize}
  \item (A) \quad $\Delta(G)=0$ (i.e. $R$ est réductible);
  \item (B) \quad $(s_{1}s_{2})^{p_{1}}(s_{1}s_{3})^{q_{1}}(s_{2}s_{3})^{r_{1}}=(s_{2}s_{3})^{r_{1}}(s_{1}s_{2})^{p_{1}}(s_{1}s_{3})^{q_{1}}$.
 \end{itemize}
\end{proposition}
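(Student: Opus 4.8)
Notons $P:=(s_{1}s_{2})^{p_{1}}$, $Q:=(s_{1}s_{3})^{q_{1}}$ et $T:=(s_{2}s_{3})^{r_{1}}$. Comme $p,q,r$ sont pairs, ce sont exactement les involutions centrales des sous-groupes diédraux $\langle s_{1},s_{2}\rangle$, $\langle s_{1},s_{3}\rangle$, $\langle s_{2},s_{3}\rangle$; en particulier $P^{2}=Q^{2}=T^{2}=1$. Puisque ces trois éléments sont involutifs, le produit obtenu en renversant l'ordre des facteurs est $T^{-1}Q^{-1}P^{-1}=(PQT)^{-1}$, de sorte que la relation $(B)$ s'exprime sous la forme $(PQT)^{2}=1$: tout revient à caractériser, au moyen de $\Delta$, le fait que $PQT$ soit involutif. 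Le plan est d'abord d'expliciter ces trois éléments. Chacun vaut $-\mathrm{Id}$ sur un plan de coordonnées et fixe la droite d'intersection des deux hyperplans de réflexion concernés: $P=-\mathrm{Id}$ sur $\langle a_{1},a_{2}\rangle$ et fixe $b_{3}$ (car $b_{3}\in H(s_{1})\cap H(s_{2})$ et $P\in\langle s_{1},s_{2}\rangle$), de même $Q$ fixe $b_{2}$ et $T$ fixe $b_{1}$. En notant $R_{i}$ la réflexion qui change $b_{i}$ en $-b_{i}$ et fixe le plan de coordonnées opposé (bien définie, les coefficients $4-\alpha,4-\beta,4-\gamma$ étant non nuls), on a $P=-R_{3}$, $Q=-R_{2}$, $T=-R_{1}$, d'où $PQT=-R_{3}R_{2}R_{1}$ et donc $(PQT)^{2}=(R_{3}R_{2}R_{1})^{2}$. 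Les formules $(5),(6),(7)$ fournissent les matrices de $P,Q,T$ dans $\mathcal{A}$.

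Pour l'implication $(A)\Rightarrow(B)$, je passerais au quotient $M'=M/C_{M}(G)$. Lorsque $\Delta=0$ on a $\langle b_{1}\rangle=\langle b_{2}\rangle=\langle b_{3}\rangle=C_{M}(G)=:\langle b\rangle$, donc $P,Q,T$ fixent tous $b$. De plus chacun induit $-\mathrm{Id}$ sur $M'$: par exemple $P$ vaut $-\mathrm{Id}$ sur $\langle a_{1},a_{2}\rangle$ et, comme $b\notin\langle a_{1},a_{2}\rangle$, ce plan s'envoie isomorphiquement sur $M'$. Par suite $PQT$ fixe $b$ et induit $(-\mathrm{Id})^{3}=-\mathrm{Id}$ sur $M'$. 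Or tout $u\in GL(M)$ vérifiant $u(b)=b$ et $u\equiv-\mathrm{Id}\pmod{\langle b\rangle}$ s'écrit $u(x)=-x+\ell(x)b$ avec $\ell(b)=2$, et un calcul immédiat donne $u^{2}=\mathrm{Id}$. Donc $(PQT)^{2}=1$, ce qui est $(B)$.

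Pour la réciproque $(B)\Rightarrow(A)$, je raisonnerais par contraposée. Si $\Delta\neq0$, alors $\det(b_{1},b_{2},b_{3})=\Delta^{2}\neq0$, donc $(b_{1},b_{2},b_{3})$ est une base de $M$. Dans cette base les $R_{i}$ prennent une forme particulièrement simple (chaque $R_{i}$ est l'identité hors de sa $i$-ème ligne): on a par exemple $R_{1}(b_{2})=b_{2}-\tfrac{2(2\alpha+\beta m)}{4-\gamma}\,b_{1}$ et $R_{1}(b_{3})=b_{3}-\tfrac{2(2\beta+\alpha l)}{4-\gamma}\,b_{1}$, avec les expressions analogues pour $R_{2}$ (dénominateur $4-\beta$) et $R_{3}$ (dénominateur $4-\alpha$). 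Je formerais $W:=R_{3}R_{2}R_{1}$, puis $W^{2}$, et j'établirais que $W^{2}=\mathrm{Id}$ équivaut à $\Delta=0$.

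Le point délicat est précisément ce dernier calcul. La trace de $W$ donne une première condition nécessaire ($W$ involutif avec $\det W=-1$ impose $\mathrm{tr}\,W=1$), mais elle ne suffit pas: il existe des jeux de paramètres avec $\Delta\neq0$ pour lesquels $\mathrm{tr}\,W=1$ alors que $W$ possède un bloc de Jordan, donc $W^{2}\neq\mathrm{Id}$. Il faudra donc exploiter en plus l'annulation des mineurs $2\times2$ de $W-\mathrm{Id}$ (c'est-à-dire $\mathrm{rg}(W-\mathrm{Id})\leq1$); c'est à ce stade que l'identité $\gamma=lm$ et les relations $(\mathcal{T})$ permettent de ramener le système obtenu à la seule équation $\Delta=0$. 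La vérification de cette équivalence entre $(R_{3}R_{2}R_{1})^{2}=\mathrm{Id}$ et $\Delta=0$ constitue la partie calculatoire essentielle de la preuve.
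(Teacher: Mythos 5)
Votre direction $(A)\Rightarrow(B)$ est correcte et même plus autonome que celle du texte: vous remarquez que chaque involution centrale fixe $b$ et induit $-\mathrm{Id}$ sur $M'$, donc que le produit des trois est de la forme $x\mapsto -x+\ell(x)b$ avec $\ell(b)=2$, donc une involution (le texte, lui, invoque le fait que $(s_1s_2)^{p_1}(s_1s_3)^{q_1}$ et $(s_1s_2)^{p_1}(s_2s_3)^{r_1}$ sont dans $N(G)$ commutatif et simplifie). En revanche, la direction $(B)\Rightarrow(A)$, qui est le c\oe ur de la proposition, n'est pas démontrée: vous décrivez un plan (former $W=R_3R_2R_1$, calculer $W^2$, exploiter la trace puis l'annulation des mineurs de $W-\mathrm{Id}$) et vous écrivez explicitement que « la vérification de cette équivalence constitue la partie calculatoire essentielle de la preuve » sans l'effectuer. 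C'est précisément là que tout se joue, et le chemin que vous esquissez est inutilement lourd. La démonstration du texte est plus directe: on écrit les trois matrices de $(s_1s_2)^{p_1}$, $(s_1s_3)^{q_1}$, $(s_2s_3)^{r_1}$ dans la base $\mathcal{A}$ (elles sont explicites et valables sans hypothèse sur $\Delta$), on forme les deux produits triples membre à membre, et la comparaison d'un \emph{seul} coefficient (ligne $2$, colonne $3$) donne $(4-\gamma)(\beta+2l)=(l+2)(2\beta+\alpha l)$, qui est l'une des relations $(\mathcal{T})$, chacune équivalente à $\Delta=0$. Aucun calcul de $W^2$, de trace ni de rang n'est nécessaire.

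Un second point mérite d'être signalé: vous identifiez le membre de droite de $(B)$ à $(PQT)^{-1}$, mais $(s_2s_3)^{r_1}(s_1s_2)^{p_1}(s_1s_3)^{q_1}=TPQ=T^{-1}P^{-1}Q^{-1}=(QPT)^{-1}$, et non $(PQT)^{-1}=TQP$. La condition $(B)$ telle qu'imprimée ($PQT=TPQ$) n'est donc pas littéralement « $(PQT)^2=1$ »; c'est $QPT=TPQ$, c'est-à-dire $(QPT)^2=1$, qui est établie par la démonstration du texte (et utilisée au corollaire 7), le membre de gauche de $(B)$ semblant comporter une transposition de ses deux premiers facteurs. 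Votre reformulation revient à démontrer la variante obtenue en échangeant $P$ et $Q$, ce qui est légitime et équivaut bien à $\Delta=0$, mais il faudrait le dire explicitement plutôt que d'affirmer que le membre de droite est le produit renversé du membre de gauche.
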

\begin{proof}
1) Si $\Delta(G)=0$, on a déjà vu que $(s_{1}s_{2})^{p_{1}}(s_{1}s_{3})^{q_{1}}$ et $(s_{1}s_{2})^{p_{1}}(s_{2}s_{3})^{r_{1}}$ sont dans $N(G)$. Comme celui-ci est commutatif on a:
\[
(s_{1}s_{2})^{p_{1}}(s_{1}s_{3})^{q_{1}}(s_{1}s_{2})^{p_{1}}(s_{2}s_{3})^{r_{1}}=(s_{1}s_{2})^{p_{1}}(s_{2}s_{3})^{r_{1}}(s_{1}s_{2})^{p_{1}})(s_{1}s_{3})^{q_{1}}
\]
donc, après simplification par $(s_{1}s_{2})^{p_{1}}$ on obtient le résultat.\\
2) On applique la construction fondamentale. On a la base adaptée $\mathcal{A}$ et le système de paramètres $\mathcal{P}(G)=\mathcal{P}(\alpha,\beta,\gamma;\alpha l,\beta m)$.\\
D'après la proposition B1 de l'appendice B, on a:
\[
(s_{1}s_{3})^{q_{1}}=
\begin{pmatrix}
-1 & \frac{2(2\alpha+\beta} {4-\beta} & 0\\
0 & 1 & 0\\
0 & \frac{2(\alpha+2m)}{4-\beta} & -1
\end{pmatrix}
,
(s_{1}s_{2})^{p_{1}}=
\begin{pmatrix}
-1 & 0 & \frac{2(2\beta+\alpha l)}{4-\alpha}\\
0 & -1 & \frac{2(\beta+2l)}{4-\alpha}\\
0 & 0 & 1
\end{pmatrix}
\]
\[
\text{et}\quad
(s_{2}s_{3})^{r_{1}}=
\begin{pmatrix}
-1 & 0 & 0\\
\frac{2(l+2)}{4-\gamma} & -1 & 0\\
\frac{2(m+2)}{4-\gamma} & 0 & -1
\end{pmatrix}
.
\]
Nous obtenons:
\[
(s_{1}s_{3})^{q_{1}}(s_{1}s_{2})^{p_{1}}(s_{2}s_{3})^{r_{1}}=
\]
\[
\begin{pmatrix}
1 & \frac{-2(2\alpha+\beta m)}{4-\beta} & \frac{-2(2\beta+\alpha l)}{4-/beta}+\frac{4(2\alpha+\beta m)(\beta+2l)}{(4-\alpha)(4-\beta)}\\
0 & -1 & \frac{2(\beta+2l)}{4-\alpha}\\
0 & \frac{-2(\alpha+2m)}{4-\beta} & -1+\frac{4(\alpha+2m)(\beta+2l)}{(4-\alpha)(4-\beta)}
\end{pmatrix}
\begin{pmatrix}
1 & 0 & 0\\
\frac{2(l+2)}{4-\gamma} & -1 & 0\\
\frac{2(m+2)}{4-\gamma} & 0  & -1
\end{pmatrix}
\]
et 
\[
(s_{2}s_{3})^{r_{1}}(s_{1}s_{2})^{p_{1}}(s_{1}s_{3})^{q_{1}}=
\]
\[
\begin{pmatrix}
-1 & 0 & \frac{2(2\beta+\alpha l)}{4-\alpha}\\
\frac{-2(l+2)}{4-\gamma} & 1 & \frac{-2(\beta+2l)}{4-\alpha}+\frac{4(l+2)(2\beta+\alpha l)}{(4-\alpha)(4-\gamma)}\\
\frac{-2(m+2)}{4-\gamma} & 0 & -1+\frac{4(m+2)(2\beta+\alpha l)}{(4-\alpha)(4-\gamma)}
\end{pmatrix}
\begin{pmatrix}
-1 & \frac{2(2\alpha+\beta m)}{4-\beta} & 0\\
0 & 1 & 0\\
0 & \frac{2(\alpha+2m)}{4-\beta} & -1
\end{pmatrix}
.
\]
On écrit que le coefficient de la deuxième ligne et de la troisième colonne de ces deux matrices sont égaux:
\[
\frac{-2(\beta+2l)}{4-\alpha}=\frac{2(\beta+2l)}{4-\alpha}-\frac{4(l+2)(2\beta+\alpha l)}{(4-\alpha)(4-\gamma)}
\]
d'où, après simplifications, on trouve l'égalité: $(4-\gamma)(\beta+2l)=(l+2)(2\beta+\alpha l)$. C'est l'une des relations $(\mathcal{T})$ et nous savons que cela implique que $\Delta(G)=0$.
\end{proof}
\begin{bfseries}
Tous les résultats dans ce qui suit sont obtenus après avoir appliqué la construction fondamentale.
\end{bfseries}
\begin{corollary}
Soit $W(p,q,r)$ un groupe de Coxeter de rang $3$. Soit $H$ le quotient de $W(p,q,r)$ qui a la présentation:
\[
(w(p,q,r),((s_{1}s_{3})^{q_{1}}(s_{1}s_{2})^{p_{1}}(s_{2}s_{3})^{r_{1}})^{2}=1).
\]
Pour toute représentation de réflexion affine $R$ de $W(p,q,r)$, si $G:=Im R$, alors $G$ est isomorphe à un quotient de $H$.
\end{corollary}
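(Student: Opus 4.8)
The plan is to realize $G$ as a quotient of $H$ by verifying that the unique extra defining relation of $H$ already holds in $G$. Set $w_0 := (s_1 s_3)^{q_1}(s_1 s_2)^{p_1}(s_2 s_3)^{r_1}$. Since $R$ is a representation of $W(p,q,r)$, the homomorphism $R\colon W(p,q,r)\to G$ is surjective, so $G$ is a quotient of $W(p,q,r)$; as $H=W(p,q,r)/\langle\langle w_0^2\rangle\rangle$, the universal property of this quotient shows that $R$ factors as $W(p,q,r)\to H\to G$ precisely when $R(w_0^2)=1$ in $G$. Hence the whole statement reduces to proving the single identity $w_0^2=1$ in $G$, the induced map $H\to G$ then being an automatic surjection.

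To establish $w_0^2=1$ I would first remark that the three factors $A:=(s_1 s_2)^{p_1}$, $B:=(s_1 s_3)^{q_1}$ and $C:=(s_2 s_3)^{r_1}$ are involutions of $G$: the defining relations $(s_1 s_2)^p=(s_1 s_3)^q=(s_2 s_3)^r=1$ of $W(p,q,r)$ are preserved by $R$, and $p=2p_1$, $q=2q_1$, $r=2r_1$, so $A^2=B^2=C^2=1$. Consequently $w_0=BAC$ has inverse $w_0^{-1}=C^{-1}A^{-1}B^{-1}=CAB$, and therefore
\[
w_0^2=1\iff w_0=w_0^{-1}\iff (s_1 s_3)^{q_1}(s_1 s_2)^{p_1}(s_2 s_3)^{r_1}=(s_2 s_3)^{r_1}(s_1 s_2)^{p_1}(s_1 s_3)^{q_1}.
\]

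The right-hand identity is exactly the commutation condition (B) of Proposition 10 (whose proof, in the affine case, yields precisely $BAC=CAB$ after cancelling the leading factor coming from the commutativity of $N$). Since $R$ is affine, i.e. reducible, we have $\Delta(G)=0$, which is condition (A); the equivalence $(A)\Leftrightarrow(B)$ proved there then gives (B), hence $w_0=w_0^{-1}$ and $w_0^2=1$. This furnishes the factorization through $H$ and shows that $G$ is a quotient of $H$.

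I do not expect a serious obstacle here: the corollary is essentially a repackaging of Proposition 10. The two points deserving care are the observation that $A$, $B$ and $C$ are involutions — which is what makes $w_0^2=1$ equivalent to the symmetric form $BAC=CAB$ rather than to $ABC=CAB$ — and the routine verification, via the universal property of presentations, that killing $w_0^2$ alone forces the entire homomorphism $R$ to factor through $H$.
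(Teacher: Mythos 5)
Your proof is correct and takes essentially the same route as the paper, whose one-line argument is that Proposition 10 shows the relation $\bigl((s_{1}s_{3})^{q_{1}}(s_{1}s_{2})^{p_{1}}(s_{2}s_{3})^{r_{1}}\bigr)^{2}=1$ holds in every affine image $G$ independently of the parameters $\alpha,\beta,\gamma,\alpha l,\beta m$, so $R$ factors through $H$. The details you supply — the universal-property reduction to $R(w_{0}^{2})=1$, and the observation that the three factors are involutions so that $w_{0}^{2}=1$ is equivalent to the commutation identity $BAC=CAB$ actually established in the proof of Proposition 10 (rather than the slightly misordered form in its statement) — are accurate and fill genuine gaps the paper leaves implicit.
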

\begin{proof}
C'est clair car dans la proposition précédente, les paramètres $\alpha$, $\beta$, $\gamma$, $\alpha l$ et $\beta m$ n'interviennent pas.
\end{proof}
On rappelle que l'on a posé $t_{k}=s_{1}(s_{2}s_{3})^{k}$, $x_{k}=s_{2}(s_{3}s_{1})^{k}$ et $y_{k}=s_{3}(s_{1}s_{2})^{k}$. On peut remarquer que si $r=2r_{1}$, alors $t_{r_{1}}^{2}$ est dans le groupe $N(G)$ si $\Delta(G)=0$; même chose si $p$ ou $q$ est pair. Pour simplifier les notations, si $r=2r_{1}$ on pose $t':=t_{r_{1}}$. On utilise toujours le fait que $N(G)$ est commutatif dans les propositions suivantes.
\begin{proposition}
Soit $W(p,q,r)$ un groupe de Coxeter de rang $3$. On suppose que $r$ est pair: $r=2r_{1}$ et que $p$, $q$ et $r$ sont $\geqslant 3$.
\begin{enumerate}
  \item On appelle $G_{2}$ le quotient de $W(p,q,r)$ lorsque l'on ajoute la relation:
  \[
  t'^{2}(s_{2}t'^{2}s_{2})=(s_{2}t'^{2}s_{2})t'^{2}.
  \]
  Soit $R_{2}$ une représentation de réflexion de $G_{2}$ avec le système de paramètres $\mathcal{P}(G_{2})=\mathcal{P}(\alpha,\beta,\gamma;\alpha l,\beta m)$. Alors on a l'une des possibilités suivantes:
  \begin{enumerate}
  \item $R_{2}$ est réductible (i. e. $\Delta(G_{2})=0$);
  \item $\alpha=1$ et $\Delta(G_{2})=4-\gamma$ (on a alors $p=3$);
  \item $\alpha=2$, $\beta=1$, $\gamma=2$, $l=-1$ et $m=-2$ (on a alors $p=r=4$ et $q=3$) et $G_{2}\simeq W(B_{3})$.
\end{enumerate}
  \item On appelle $G_{3}$ le quotient de $W(p,q,r)$ lorsque l'on ajoute la relation:
  \[
  t'^{2}(s_{3}t'^{2}s_{3})=(s_{3}t'^{2}s_{3})t'^{2}.
  \]
  Soit $R_{3}$ une représentation de réflexion de $G_{3}$ avec le système de paramètres $\mathcal{P}(G_{3})=\mathcal{P}(\alpha,\beta,\gamma;\alpha l,\beta m)$. Alors on a l'une des possibilités suivantes:
   \begin{enumerate}
  \item $R_{3}$ est réductible (i. e. $\Delta(G_{3})=0$);
  \item $\beta=1$ et $\Delta(G_{3})=4-\gamma$ (on a alors $q=3$);
  \item $\alpha=1$, $\beta=2$, $\gamma=2$, $l=-2$ et $m=-1$ (on a alors $q=r=4$ et $p=3$) et $G_{3}\simeq W(B_{3})$.
\end{enumerate}
  \end{enumerate}
\end{proposition}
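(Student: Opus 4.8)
The plan is to work inside the adapted basis $\mathcal{A}=(a_1,a_2,a_3)$ furnished by the construction fondamentale, turn the defining relation of $G_2$ into an identity of $3\times3$ matrices, and read off the parameter constraints. Since $r=2r_1$, the element $\sigma:=(s_2s_3)^{r_1}$ is the involution fixing $b=b_1$ and acting as $-\mathrm{Id}$ on $\langle a_2,a_3\rangle$, so
\[
\sigma=\begin{pmatrix}1&0&0\\ \frac{2(l+2)}{4-\gamma}&-1&0\\ \frac{2(m+2)}{4-\gamma}&0&-1\end{pmatrix},\qquad
t'=s_1\sigma=\begin{pmatrix}3-\delta&-\alpha&-\beta\\ \frac{2(l+2)}{4-\gamma}&-1&0\\ \frac{2(m+2)}{4-\gamma}&0&-1\end{pmatrix},
\]
where I have set $\delta:=\frac{2\Delta}{4-\gamma}$ and used $\alpha\frac{2(l+2)}{4-\gamma}+\beta\frac{2(m+2)}{4-\gamma}=4-\delta$, which is immediate from the definition of $\Delta$ and $lm=\gamma$. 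From this I would compute $t'^2$ in closed form. Because $\sigma$ is central in $\langle s_2,s_3\rangle$ it commutes with $s_2$, so the conjugate takes the convenient shape $s_2t'^2s_2=(s_1'\sigma)^2$ with $s_1':=s_2s_1s_2$, and the relation to be analysed is the commutation $[\,t'^2,\,s_2t'^2s_2\,]=1$.

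The structural fact guiding everything is that $t'=s_1\sigma$ is a product of two involutions, hence conjugate to its inverse: its eigenvalues are $\{-1,\lambda,\lambda^{-1}\}$ with $\lambda+\lambda^{-1}=2-\delta$, so $t'^2$ has eigenvalues $\{1,\lambda^2,\lambda^{-2}\}$ and its $1$-eigenline is $\ell:=\langle\beta a_2-\alpha a_3\rangle$, while its $\lambda^{\pm2}$-eigenlines are $\langle(1+\lambda^{\pm1})a_1+\frac{2(l+2)}{4-\gamma}a_2+\frac{2(m+2)}{4-\gamma}a_3\rangle$, which genuinely involve $a_1$ as soon as $\lambda\neq-1$. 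A direct computation gives $s_2(\ell)=\langle(\beta+\alpha l)a_2+\alpha a_3\rangle$, whence $s_2$ fixes $\ell$ exactly when $2\beta+\alpha l=0$. The whole argument then bifurcates on this quantity.

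If $2\beta+\alpha l\neq0$, then $s_2$ does not fix $\ell$; when $1$ is a simple eigenvalue of $t'^2$, commutation forces $s_2(\ell)$ to be a $t'^2$-invariant line, and since $s_2(\ell)$ has no $a_1$-component it cannot equal a $\lambda^{\pm2}$-eigenline (these involve $a_1$) unless two eigenvalues collide. Hence commutation forces a repeated eigenvalue, i.e. $\delta\in\{0,2,4\}$. The value $\delta=0$ is case (a): $\Delta=0$, $R_2$ reducible, and $t'^2,s_2t'^2s_2\in N(G)$ commute automatically. The value $\delta=4$ makes $t'^2$ a regular unipotent, with abelian centraliser $K[t'^2]$; requiring $s_2t'^2s_2\in K[t'^2]$ produces relations that I expect to be incompatible with $0<\alpha,\beta,\gamma<4$, so it is discarded. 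For $\delta=2$, i.e. $\Delta=4-\gamma$, the matrix $t'^2$ is an involution with eigenvalues $1,-1,-1$; commutation is then equivalent to $(t'^2s_2t'^2s_2)^2=1$, and forcing $s_2(\ell)$ into the $(-1)$-eigenplane of $t'^2$ yields a relation which, after clearing denominators and using $\Delta=4-\gamma$, $lm=\gamma$ and the fact that $\alpha$ is a root of $v_p(X)$ in $(0,4)$, collapses to $\alpha=1$ (hence $p=3$): this is case (b). The complementary branch $2\beta+\alpha l=0$ is the degenerate one in which $s_2$ fixes $\ell$; imposing commutation together with the range and order constraints then pins $(\alpha,\beta,\gamma;l,m)$ rigidly to $(2,1,2;-1,-2)$, which is case (c).

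Finally, in case (c) the parameter system $\mathcal{P}(2,1,2;-2,-2)$ together with $p=r=4$, $q=3$ and the added relation is recognised as the data of $W(B_3)$ acting by its $3$-dimensional reflection representation, whence $G_2\simeq W(B_3)$. Part~2, concerning $G_3$, follows verbatim under the symmetry exchanging $s_2\leftrightarrow s_3$, $\alpha\leftrightarrow\beta$ and $l\leftrightarrow m$: it turns $\alpha=1$ into $\beta=1$ and the sporadic point into $(\alpha,\beta,\gamma;l,m)=(1,2,2;-2,-1)$. The main obstacle will be the explicit factorisation in the $\delta=2$ branch — arranging the polynomial identities so that the split into (b) and into the degenerate point (c) is transparent — together with the verification that $\delta=4$ admits no admissible parameters.
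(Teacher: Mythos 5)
Your route is genuinely different from the paper's: the paper sets $T:=\Delta/(4-\gamma)$, writes out $t'^{2}$ and $s_{2}t'^{2}s_{2}$ on the basis $(a_{1},a_{2},a_{3},B)$ and equates coefficients of the commutator applied to $a_{3}$, obtaining (for $T\neq 0$) the equation $2T^{2}-(6-\alpha)T+(4-\alpha)=0$, i.e.\ $(T-1)(2T-(4-\alpha))=0$, plus two companions; you replace this by an invariant-line analysis of the diagonalizable operator $t'^{2}$. Where you carry it out, your argument is sound and even cleaner than the paper's: the eigendata of $t'$ are as you state, and in the branch $2\beta+\alpha l\neq 0$, $\delta=2$, the condition $s_{2}(\ell)\subset P=\langle a_{1},(l+2)a_{2}+(m+2)a_{3}\rangle$ reads $(\beta+\alpha l)(m+2)=\alpha(l+2)$, which is identically $(2-\alpha)(4-\gamma)=\Delta$; with $\Delta=4-\gamma$ this collapses to $\alpha=1$, exactly case (b), with no appeal to $v_{p}$ needed.

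There are, however, two genuine gaps. First, the elimination of $\delta=4$ is only \emph{expected} (``produces relations that I expect to be incompatible''); nothing is proved, and you would also need to settle the Jordan type of $t'$ there before invoking $C(t'^{2})=K[t'^{2}]$. (The paper's coefficient equation disposes of it instantly: at $T=2$ it forces $\alpha=0$.) Second, and more seriously, the entire branch $2\beta+\alpha l=0$ --- which is precisely where case (c) lives --- is pure assertion: ``imposing commutation together with the range and order constraints then pins $(\alpha,\beta,\gamma;l,m)$ rigidly to $(2,1,2;-1,-2)$.'' No mechanism is indicated for how commutation constrains the parameters once $s_{2}$ fixes $\ell$ (one must then analyse how $s_{2}$ acts on the remaining eigenlines, which again forces $\delta\in\{0,2,4\}$ and then a separate computation in each sub-case), and as stated the claim is not even accurate, since cases (a) and (b) can also occur with $2\beta+\alpha l=0$. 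This is the hardest third of the proposition and the proposal contains no plan for it. Finally, $G_{2}\simeq W(B_{3})$ in case (c) is also only ``recognised''; the paper's reason is that $\beta+l=\alpha+m=0$ makes $s_{2}s_{3}^{s_{1}}$ an involution, so $G=\langle s_{2},s_{1},s_{3}^{s_{1}}\rangle$ is of type $B_{3}$. Until the $\delta=4$ and $2\beta+\alpha l=0$ branches are actually carried out, the proof is incomplete.
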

\begin{proof}
La démonstration du 2) est semblable à celle du 1). Dans toute la suite de cette démonstration pour simplifier les notations, on pose: $T:=\frac{\Delta}{4-\gamma}$ (où $\Delta:=\Delta(G_{2})$). On peut remarquer que $\alpha(l+2)+\beta(m+2)=2(4-\gamma)-\Delta$ et $\frac{2}{4-\gamma}(\alpha(l+2)+\beta(m+2))=4-2T$.\\
D'après la proposition B1 on a:
\[
t'=
\begin{pmatrix}
3-2T & -\alpha & -\beta\\
\frac{2(l+2)}{4-\gamma} & -1 & 0\\
\frac{2(m+2)}{4-\gamma} & 0 & -1
\end{pmatrix}
\]
donc 
\[
t'^{2}=
\begin{pmatrix}
4T^{2}-10T+5 & -2\alpha (1-T) & -2\beta(1-T)\\
\frac{4(l+2)}{4-\gamma}(1-T) & 1-\frac{2\alpha(l+2)}{4-\gamma} & -\frac{2\beta(l+2)}{4-\gamma}\\
\frac{4(m+2)}{4-\gamma}(1-T) & -\frac{2\alpha(m+2)}{4-\gamma} & 1-\frac{2\beta(m+2)}{4-\gamma}
\end{pmatrix}
.
\]
Si $b_{1}=(4-\gamma)a_{1}+(l+2)a_{2}+(m+2)a_{3}$, on a $<b_{1}>=C_{M}(s_{2},s_{3})$ et $s_{1}(b_{1})=b_{1}-\Delta a_{1}=b_{1}-(4-\gamma)Ta_{1}$.\\
Pour encore simplifier les notations nous posons $B:=\frac{2}{4-\gamma}b_{1}$. Nous obtenons alors:
\begin{eqnarray*}
t'^{2}(a_{1}) & = & (4T^{2}-6T+1)a_{1}+2(1-T)B; \\
t'^{2}(a_{2}) & = & 2\alpha Ta_{1}+a_{2}-\alpha B;\\
t'^{2}(a_{3}) & = & 2\beta Ta_{1}+a_{3}-\beta B;\\
t'^{2}(B) & = & 4T(T-1)a_{1}+(1-2T)B.
\end{eqnarray*}
Puis
\begin{eqnarray*}
s_{2}t'^{2}s_{2}(a_{1}) & = & (4T^{2}+2(\alpha-3)T+1)a_{1}+2T(2T+(\alpha-3))a_{2} +(2-\alpha -2T)B;\\
s_{2}t'^{2}s_{2}(a_{2}) & = & -2\alpha Ta_{1}+(1-2\alpha T)a_{2}+\alpha B;\\
s_{2}t'^{2}s_{2}(a_{3}) & = & 2(\beta+\alpha l)Ta_{1}+2(\beta+\alpha l)Ta_{2}+a_{3}-(\beta+\alpha l)B;\\
s_{2}t'^{2}s_{2}(B) & = & 4T(T-1)a_{1}+4T(T-1)a_{2}+(1-2T)B.
\end{eqnarray*}
On obtient:
\begin{eqnarray*}
t'^{2}(s_{2}t'^{2}s_{2}(a_{3})) & = & 2(\beta+\alpha l)T((4T^{2}-6T+1)a_{1}+2(1-T)B) \\
 & + & 2(\beta+\alpha l)T(2\alpha Ta_{1}+a_{2}-\alpha B)+(2\beta Ta_{1}+a_{3}-\beta B)\\
 & - & (\beta+\alpha l)(4T(T-1)a_{1}+(1-2T)B) 
\end{eqnarray*}
et 
\begin{eqnarray*}
(s_{2}t'^{2}s_{2})t'^{2}(a_{3}) & = & 2\beta T [(4T^{2}+2(\alpha-3)T+1)a_{1}+2T(2T+(\alpha-3))a_{2}+(2-\alpha-2T)B] \\
 & + & [2(\beta+\alpha l)Ta_{1}+2(\beta+\alpha l)Ta_{2}+a_{3}-(\beta+\alpha l)B\\
 & - & \beta[4T(T-1)a_{1}+4T(T-1)a_{2}+(1-2T)B].
\end{eqnarray*}
On en déduit, après simplifications:
\[
(t'^{2}(s_{2}t'^{2}s_{2}(a_{3})-(s_{2}t'^{2}s_{2})t'^{2})a_{3}=0=
\]
\[
4\alpha lT(2T^{2}-(4-\alpha)T+1)a_{1}-4\beta T(2T^{2}-(4-\alpha)T+1)a_{2}-2\alpha lT(2T+\alpha-3)B.
\]
On a donc, ou bien $T=0$, c'est à dire $R_{2}$ est réductible, ou bien
\[
(\star) \qquad (2T^{2}-(4-\alpha)T+1)(2\alpha la_{1}-2\beta a_{2})-2\alpha lT(2T+\alpha-3)B=0.
\]
Dans la suite on suppose que $T\neq 0$. On a $B=\frac{2}{4-\gamma}b_{1}=2a_{1}+\frac{2(l+2)}{4-\gamma}a_{2}+\frac{2(m+2)}{4-\gamma}a_{3}$ et la relation $(\star)$ devient, en donnant à B sa valeur:
\begin{eqnarray*}
2\alpha l(2T^{2}-(6-\alpha)T+(4-\alpha))a_{1} &  \\
-2 (\beta(2T^{2}-(4-\alpha)T+1)+\alpha l(2T+(\alpha-3))(\frac{l+2}{4-\gamma}))a_{2} & \\
-2\alpha l(2T+(\alpha-3))(\frac{m+2}{4-\gamma})a_{3}=0.
\end{eqnarray*}
Comme $(a_{1},a_{2},a_{3})$ est une base de $M$, nous obtenons les trois équations suivantes qui doivent être satisfaites:
\begin{enumerate}
  \item $2\alpha l(2T^{2}-(6-\alpha)T+(4-\alpha))=0$
  \item $-2[\beta(2T^{2}-(4-\alpha)T+1)+\alpha (\frac{l+2}{4-\gamma})(2T+(\alpha-3))]=0$
  \item $(\frac{-2\alpha l(m+2)}{4-\gamma})(2T+(\alpha-3))=0$
\end{enumerate}
Comme $2\alpha l\neq 0$puisque $lm=\gamma\neq 0$ 1. peut s'écrire $(T-1)(2T-(4-\alpha))=0$ d'où deux possibilités:\\
\textbf{Première possibilité}: $T=1$. Alors 2. devient $(\frac{\alpha-1}{4-\gamma})(\beta(4-\alpha)+\alpha l(l+2))=0$ et 3. devient $(\alpha-1)(m+2)=0$, donc ou bien $\alpha=1$, ou bien $\alpha \neq 1$, $m=-2$, $4-\gamma=2(l+2)$ car $\gamma=lm$ et 2. devient $(2\beta+\alpha l)(m+2)=0$. Comme $\gamma \neq 4$ on a $l\neq -2$ et $2\beta+\alpha l=0$ d'où $\alpha l= \beta m$ et $\alpha\gamma=4\beta$. D'après la proposition A17, nous avons $\alpha=\gamma=2$ et $\beta=1$ dans ce cas.\\
\textbf{Deuxième possibilité}: $T=\frac{4-\alpha}{2}$. Alors 3. devient $m+2=0$, $m=-2$,\\ $4-\gamma=2(l+2)$ ($l\neq-2$) et 2. devient : $-2\beta(\frac{2(4-\alpha)^{2}}{4}-\frac{(4-\alpha)^{2}}{2}+1)-\alpha l=0$ d'où $2\beta+\alpha l=0$. Comme ci-dessus $\alpha l=\beta m$, $\alpha\gamma=4\beta$, $\alpha=\gamma=2$, $\beta=1$ et $T=1$. Nous obtenons $p=r=4$ et $q=3$. De plus $\beta+l=\alpha+m=0$ donc $s_{2}s_{3}^{s_{1}}$ est d'ordre $2$, $G=<s_{2},s_{1},s_{3}^{s_{1}}>$ donc $G\simeq W(B_{3})$.

On vérifie sans peine que, dans tous les cas,
\[
t'^{2}(s_{2}t'^{2}s_{2})(a_{i})=(s_{2}t'^{2}s_{2})t'^{2}(a_{i}) \quad \text{pour}\quad  1\leqslant i \leqslant 3.
\]
En effet, il suffit de le faire lorsque $T=1$ avec les valeurs indiquées pour les paramètres $\alpha,\beta,\gamma, \alpha l, \beta m$.
\end{proof}
On a des résultats analogues lorsque $p$ est pair (resp. $q$ est pair).
\begin{corollary}
Soit $W(p,q,r)$ un groupe de Coxeter de rang $3$. On suppose que $r$ est pair, que $p\geqslant 4$ et si $p=4$ que $q\geqslant 4$. Soit $R$ un représentation de réflexion de $W(p,q,r)$. Alors:
\begin{enumerate}
  \item Les deux conditions suivantes sont équivalentes:
  \begin{itemize}
  \item (A) \qquad $\Delta=0$;
  \item (B) \qquad $(t'^{2}s_{2})^{2}=(s_{2}t'^{2})^{2}$.
\end{itemize}
  \item On suppose que les conditions (A) et (B) sont satisfaites. Alors il existe une représentation de réflexion affine $R$ de $W(p,q,r)$ telle que si $G=Im R$, pour toute représentation affine $R'$ de $W(p,q,r)$ avec $G'=Im R'$, alors $G'$ est isomorphe à un quotient de $G$. Si $G=<s_{1},s_{2},s_{3}>$, une ``présentation'' de $G$ est:
  \[
  (w(p,q,r),(t'^{2}s_{2})^{2}=(s_{2}t'^{2})^{2}).
  \]
\end{enumerate}
\end{corollary}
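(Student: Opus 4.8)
Le plan est de ramener la première assertion à la proposition précédente (celle qui introduit le groupe $G_2$). D'abord je récrirais la condition (B): comme $s_2^2=1$, on a
\[
(t'^2 s_2)^2 = t'^2\,(s_2 t'^2 s_2)\quad\text{et}\quad (s_2 t'^2)^2 = (s_2 t'^2 s_2)\,t'^2,
\]
de sorte que (B) équivaut exactement à ce que $t'^2$ commute avec son conjugué $s_2 t'^2 s_2$, c'est-à-dire à la relation définissant $G_2$. L'implication (A)$\Rightarrow$(B) en résulte aussitôt: si $\Delta=0$ la représentation est affine et l'on a rappelé que $t'^2=t_{r_1}^2\in N(G)$; comme $N(G)$ est normal, $s_2 t'^2 s_2\in N(G)$, et $N(G)$ étant commutatif, $t'^2$ et $s_2 t'^2 s_2$ commutent.

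Pour (B)$\Rightarrow$(A), je supposerais (B) satisfaite: $R$ vérifie alors la relation définissant $G_2$, donc se factorise par $G_2$, et la proposition précédente fournit l'une des trois possibilités (a), (b), (c) pour le système de paramètres. L'hypothèse $p\geqslant 4$ élimine (b), qui impose $p=3$, et l'hypothèse que $p=4$ entraîne $q\geqslant 4$ élimine (c), qui impose $p=r=4$ et $q=3$. Il ne subsiste que (a), soit $\Delta=0$, c'est-à-dire (A). Ceci établit l'équivalence du point 1; c'est la partie la plus directe, l'essentiel étant de reconnaître dans (B) la relation de commutation de la proposition précédente.

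Pour le point 2, l'équivalence qui précède montre que toute représentation de réflexion affine de $W(p,q,r)$ satisfait (B); donc tout groupe $G'$ image d'une telle représentation est un quotient du groupe $H$ de présentation $(w(p,q,r),(t'^2 s_2)^2=(s_2 t'^2)^2)$. Il reste à exhiber une représentation de réflexion affine $R$ dont l'image $G$ réalise exactement $H$: la propriété universelle annoncée en découlera, tout $G'$ étant alors quotient de $G\simeq H$. Pour construire un tel $R$, j'appliquerais la construction fondamentale à un système de paramètres $\mathcal{P}(\alpha,\beta,\gamma;\alpha l,\beta m)$ avec $\Delta=0$ en prenant le sous-groupe de translations $N$ le plus gros possible, c'est-à-dire le $\mathcal{O}'(K)$-module libre de rang maximal autorisé par le théorème 1; l'image obtenue est affine et satisfait (B), donc est un quotient de $H$. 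La difficulté principale sera de vérifier qu'aucune relation supplémentaire n'apparaît, autrement dit que cette représentation universelle ne dégénère pas et que $G\simeq H$; il faudra pour cela contrôler que $N$ atteint son rang maximal sans effondrement accidentel, ce qui reposera sur la structure de $\mathcal{O}'(K)$-module de $N$ (théorème 1 et proposition 5) et sur les expressions matricielles explicites des propositions 2 et 3.
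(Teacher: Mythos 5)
Votre traitement du point 1 est correct et suit essentiellement la même voie que l'article : vous reconnaissez dans (B), via $s_{2}^{2}=1$, la relation de commutation $t'^{2}(s_{2}t'^{2}s_{2})=(s_{2}t'^{2}s_{2})t'^{2}$ qui définit le quotient $G_{2}$ de la proposition précédente, vous obtenez (A)$\Rightarrow$(B) par l'appartenance de $t'^{2}$ à $N$, la normalité de $N$ et sa commutativité, et (B)$\Rightarrow$(A) en appliquant cette proposition puis en éliminant les cas $p=3$ et $(p=r=4,\,q=3)$ grâce aux hypothèses $p\geqslant4$ et ($p=4\Rightarrow q\geqslant4$). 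C'est exactement l'argument de l'article, qui se borne à renvoyer à la proposition sur $G_{2}$ sans expliciter l'élimination des cas parasites ; votre rédaction est ici plus complète que l'originale. Pour le point 2, l'article affirme seulement que c'est clair parce que la relation ajoutée ne fait pas intervenir les paramètres $\alpha,\beta,\gamma,\alpha l,\beta m$, de sorte que toute image $G'$ d'une représentation affine est quotient du groupe $H$ présenté par $(w(p,q,r),(t'^{2}s_{2})^{2}=(s_{2}t'^{2})^{2})$. Vous allez plus loin en voulant exhiber une représentation affine dont l'image réalise $H$ lui-même, et vous signalez vous-même que la vérification de $G\simeq H$ (maximalité de $N$, absence de relations supplémentaires) n'est pas menée à terme : c'est effectivement le point délicat, mais l'article ne le traite pas davantage, si bien que votre esquisse n'est pas en retrait par rapport à la démonstration publiée ; elle reste toutefois un programme plutôt qu'une preuve sur ce point précis.
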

\begin{proof}
Il est clair que (A) $\Rightarrow$ (B) car $N$ est commutatif et $t'^{2}$ et $s_{2}t'^{2}s_{2}$ sont dans $N$. Supposons la condition (B) satisfaite. On a le résultat grâce à la proposition 11. Le 2. est clair car les paramètres n'interviennent pas.
\end{proof}
On a des résultats analogues en échangeant les rôles de $p$, $q$ et $r$.
\begin{proposition}
Soit$W(p,q,r)$ un groupe de Coxeter de rang $3$. On suppose que $r$ est pair, et $p\geqslant 4$. On appelle $G$ le quotient de $W(p,q,r)$ lorsque l'on ajoute les relations:
\[
(1) \quad (t'^{2}s_{2})^{2}=(s_{2}t'^{2})^{2} \quad \text{et} \quad (2) \quad (t'^{2}s_{3})^{2}=(s_{3}t'^{2})^{2}.
\]
Soit $R$ une représentation de réflexion de $G$. Alors on a l'une des possibilités suivantes:
\begin{enumerate}
  \item $R$ est réductible (i.e. $\Delta=0$).
  \item $G$ est fini et on a l'une des possibilités suivantes:\begin{enumerate}
  \item $p=q=3$, $\alpha=\beta=1$, $\Delta=4-\gamma$ et $G\simeq G(r,r,3)$ groupe de réflexion complexe imprimitif.
  \item $p=3$, $q=r=4$, $\alpha=1$, $\beta=\gamma=2$, $l=-2$, $\Delta=2$, $\alpha l= \beta m$ et $G\simeq W(B_{3})$.
  \item $p=r=4$, $q=3$, $\alpha=\gamma=2$, $\beta=1$, $l=-1$, $\Delta=2$, $\alpha l= \beta m$ et $G\simeq W(B_{3})$.
\end{enumerate}
 \end{enumerate}
\end{proposition}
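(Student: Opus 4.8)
The plan is to deduce the result from Proposition 11 by recognizing relations $(1)$ and $(2)$ as the two commutation relations treated there. First I would record the elementary identities
\[
(t'^{2}s_{2})^{2}=t'^{2}(s_{2}t'^{2}s_{2}),\qquad (s_{2}t'^{2})^{2}=(s_{2}t'^{2}s_{2})t'^{2},
\]
so that relation $(1)$ says exactly that $t'^{2}$ commutes with $s_{2}t'^{2}s_{2}$, i.e. it is the defining relation of the group $G_{2}$ of Proposition 11; the same computation with $s_{3}$ in place of $s_{2}$ shows that $(2)$ is the defining relation of $G_{3}$. Since $G$ is the quotient of $W(p,q,r)$ by both relations, any reflection representation $R$ of $G$ pulls back to a reflection representation of $W(p,q,r)$ for which $(1)$ and $(2)$ both hold; hence $R$ satisfies simultaneously the conclusion of part 1) and the conclusion of part 2) of Proposition 11 (whose hypotheses, $r$ even and the rank-$2$ orders at least $3$, are in force).

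Then comes the case combination. By part 1), one of the following holds: (i) $\Delta=0$; (ii) $\alpha=1,\ \Delta=4-\gamma$ (so $p=3$); (iii) $\alpha=2,\beta=1,\gamma=2,l=-1,m=-2$ (so $p=r=4,\ q=3$). By part 2): (i$'$) $\Delta=0$; (ii$'$) $\beta=1,\ \Delta=4-\gamma$ (so $q=3$); (iii$'$) $\alpha=1,\beta=2,\gamma=2,l=-2,m=-1$ (so $p=3,\ q=r=4$). If $\Delta=0$ occurs in either list we are in possibility 1. Otherwise I would cross the two trichotomies. The pairing (ii)$\wedge$(ii$'$) forces $\alpha=\beta=1$, $\Delta=4-\gamma$, $p=q=3$, which is case 2(a). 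The pairing (ii)$\wedge$(iii$'$) is consistent, since both force $\alpha=1$, and yields $p=3,\ q=r=4$ with $\Delta=4-\gamma=2$ and $\alpha l=\beta m$: case 2(b). Symmetrically (iii)$\wedge$(ii$'$), consistent because both force $\beta=1$, gives case 2(c). The remaining pairing (iii)$\wedge$(iii$'$) is impossible, as it would require $\alpha=2$ and $\alpha=1$ at once. This produces exactly the list in the statement.

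For the identifications, the two $B_{3}$ cases need no extra work: Proposition 11 already establishes $G_{2}\simeq W(B_{3})$ in case (c) of its part 1) and $G_{3}\simeq W(B_{3})$ in case (c) of its part 2), and in each of our cases 2(b), 2(c) the relation $\beta+l=\alpha+m=0$ that drives that identification holds. The finiteness of $G$ and the isomorphism $G\simeq\mathrm{Im}\,R$ then follow, since $\Delta\neq0$ makes $R$ irreducible and the construction fondamentale determines the group from its system of parameters.

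I expect the genuinely new, and hardest, step to be the isomorphism $G\simeq G(r,r,3)$ in case 2(a), where $r$ is an unbounded even parameter. Here $\alpha=\beta=1$ forces $p=q=3$, so that $s_{1},s_{2}$ and $s_{1},s_{3}$ satisfy braid relations and each generate a copy of $S_{3}$, while $m_{23}=r$; the plan is to write the three reflections explicitly, in a suitable basis of the $3$-dimensional module, as monomial matrices whose nonzero entries are $r$-th roots of unity of product $1$, thereby realizing the standard imprimitive reflection action of $G(r,r,3)$, and then to conclude by comparing orders ($|G(r,r,3)|=6r^{2}$) together with the universal property of the quotient $G$. The delicate point is pinning down the roots of unity correctly, equivalently reading off $\gamma$ and hence $r$ from the relation $\Delta=4-\gamma$, and that is where the main effort will lie.
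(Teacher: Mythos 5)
Your proposal follows essentially the same route as the paper: recognize $(1)$ and $(2)$ as the commutation relations defining $G_{2}$ and $G_{3}$ in Proposition 11, apply both trichotomies, and cross them to obtain exactly the cases $\Delta=0$, $\alpha=\beta=1$, and the two $W(B_{3})$ configurations. The only divergence is the identification $G\simeq G(r,r,3)$ in case 2(a), which the paper disposes of by citing an earlier proposition of the series rather than by the monomial-matrix construction you sketch; otherwise the arguments coincide.
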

\begin{proof}
D'après la proposition 11 la relation (1) implique que l'on a l'une des possibilités:\\ $\Delta=0$ ou $(\Delta=4-\gamma\; \text{et}\; \alpha=1)$ ou $(m=-2,\Delta=\frac{1}{2}(4-\alpha)(4-\gamma),\alpha l=\beta m, \;\text{on a alors}\; l\neq-2)$.\\
De même la relation (2) implique que l'on a l'une des possibilités:\\ $\Delta=0$ ou $(\Delta=4-\gamma\; \text{et}\; \beta=1)$ ou $(m=-2,\Delta=\frac{1}{2}(4-\beta)(4-\gamma),\alpha l=\beta m, \;\text{on a alors}\; m\neq-2)$.\\
On peut donc avoir $\Delta=0$: $R$ est réductible; ou bien $\Delta=4-\gamma$, $\alpha=\beta=1$ (i.e. $p=q=3$), alors d'après la proposition 5.7, $G\simeq G(r,r,3)$; ou bien $\Delta=4-\gamma$, $\alpha=1$, $l=-2$, $\Delta=4-\gamma=\frac{1}{2}(4-\beta)(4-\gamma)$ donc $\beta=2$, $\alpha l=\beta m=-2=2m$, $m=-1$ et $\gamma=lm=2$. On a $p=3$, $q=r=4$ et $G\simeq W(B_{3})$.\\
Le quatrième cas est identique au précédent.
\end{proof}
Notons enfin le résultat suivant:
\begin{proposition}
Soient $p$, $q$ et $r$ trois entiers impairs $\geqslant3$. Soit $R$ une représentation de réflexion de $W(2p,2q,r)$. Alors:
\begin{enumerate}
  \item Les trois conditions suivantes sont équivalentes:
  \begin{itemize}
  \item (A) \qquad $R$ est réductible (i.e. $\Delta=0$);
  \item (B) \qquad $(y'^{2}s_{1})^{2}=(s_{1}y'^{2})^{2}$;
  \item (C) \qquad $(x'^{2}s_{1})^{2}=(s_{1}x'^{2})^{2}$.
\end{itemize}
  \item On suppose les conditions (A), (B) et (C) satisfaites. Il existe une représentation de réflexion affine $R$ de $W(2p,2q,r)$ telle que si $G:=Im R$, pour toute représentation de réflexion affine $R'$ de $W(2p,2q,r)$ avec $G':=Im R'$, alors $G'$ est isomorphe à un quotient de $G$. Si $G=<s_{1},s_{2},s_{3}>$ des ``présentations'' de $G$ sont:
  \[
  (w(p,q,r),(y'^{2}s_{1})^{2}=(s_{1}y'^{2})^{2});
  \]
  \[
  (w(p,q,r),(x'^{2}s_{1})^{2}=(s_{1}x'^{2})^{2}).
  \]
\end{enumerate}
\end{proposition}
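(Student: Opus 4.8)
Le plan est d'établir d'abord l'équivalence des trois conditions du point~1, puis d'en déduire le point~2 par le même argument d'universalité que celui du corollaire~3 et du point~2 du corollaire~4. Pour le point~1, l'implication $(A)\Rightarrow(B)$ (et, de façon identique, $(A)\Rightarrow(C)$) est immédiate: comme $p,q\geqslant3$, les produits $s_{1}s_{2}$ et $s_{1}s_{3}$ ont pour ordres les entiers pairs $2p$ et $2q$, et la remarque qui précède la proposition~11 assure alors que, si $\Delta=0$, les éléments $y'^{2}=y_{p}^{2}$ et $x'^{2}=x_{q}^{2}$ appartiennent à $N$. Puisque $N$ est normal dans $G$ et que $s_{1}\in G$, les conjugués $s_{1}y'^{2}s_{1}$ et $s_{1}x'^{2}s_{1}$ restent dans $N$; comme $N$ est commutatif, $y'^{2}$ commute avec $s_{1}y'^{2}s_{1}$, et en écrivant $(y'^{2}s_{1})^{2}=y'^{2}(s_{1}y'^{2}s_{1})$ et $(s_{1}y'^{2})^{2}=(s_{1}y'^{2}s_{1})y'^{2}$ on obtient $(B)$, puis de même $(C)$.

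Pour les réciproques, je ramènerais $(B)$ et $(C)$ au corollaire~4 (qui, lorsque $r$ est pair, affirme l'équivalence de $\Delta=0$ et de $(t'^{2}s_{2})^{2}=(s_{2}t'^{2})^{2}$) à l'aide de la remarque qui le suit, autorisant l'échange des rôles de $p$, $q$ et $r$. Pour $(B)$, la permutation $1\mapsto3,\,2\mapsto1,\,3\mapsto2$ envoie le produit pair $s_{2}s_{3}$ sur $s_{1}s_{2}$, l'élément $t'=s_{1}(s_{2}s_{3})^{r_{1}}$ sur $y'=s_{3}(s_{1}s_{2})^{p}$ et la réflexion conjugante $s_{2}$ sur $s_{1}$: la relation devient exactement $(B)$, et le paramètre exceptionnel $\alpha$ du corollaire~4 (celui qui gouverne l'ordre de $s_{1}s_{2}$) devient $\beta$, gouvernant l'ordre $2q$ de $s_{1}s_{3}$. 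Pour $(C)$, la transposition $1\leftrightarrow2$ envoie $t'$ sur $x'$ et $s_{2}$ sur $s_{1}$, et le paramètre exceptionnel reste $\alpha$, gouvernant l'ordre $2p$ de $s_{1}s_{2}$.

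Le point décisif est alors arithmétique: les cas d'exception du corollaire~4 (ceux où la relation a lieu sans que $\Delta=0$) imposent au paramètre exceptionnel la valeur $1$ ou $2$, c'est-à-dire un ordre égal à $3$ ou à $4$ pour le produit correspondant. Or $2p$ et $2q$ sont pairs et $\geqslant6$, donc distincts de $3$ et de $4$; par suite $\alpha\neq1,2$ et $\beta\neq1,2$, et aucun cas d'exception ne se présente. Les réciproques entraînent donc $\Delta=0$, ce qui achève le point~1. On pourrait aussi éviter le recours aux analogues du corollaire~4 par un calcul direct, dans l'esprit de la proposition~11: exprimer $(s_{1}s_{2})^{p}$ grâce à la proposition~B1 de l'appendice~B, en déduire $y'^{2}$, puis traduire $(B)$ en l'annulation du commutateur $[\,y'^{2},\,s_{1}y'^{2}s_{1}\,]$; on retrouve l'alternative $\Delta=0$ ou $\beta\in\{1,2\}$.

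Pour le point~2, je reprendrais l'argument du corollaire~3 et du point~2 du corollaire~4: la relation $(B)$ — et de même $(C)$ — ne fait intervenir aucun des paramètres $\alpha,\beta,\gamma,\alpha l,\beta m$. D'après le point~1, toute représentation de réflexion affine $R'$ de $W(2p,2q,r)$ est réductible, donc satisfait $(B)$; elle se factorise par le quotient de $W(2p,2q,r)$ obtenu en ajoutant cette relation, et la construction fondamentale appliquée à ce quotient fournit une représentation affine $R$ dont l'image $G$ est universelle, tout $G'=\mathrm{Im}\,R'$ en étant un quotient, d'où les présentations annoncées. Le principal obstacle sera la réciproque $(B)\Rightarrow(A)$: il faudra soit transporter proprement le corollaire~4 par permutation des indices, en vérifiant que la relation \emph{et} le paramètre exceptionnel se correspondent bien, soit refaire le calcul matriciel de la proposition~11; dans les deux cas, la conclusion sans cas exceptionnel repose entièrement sur l'inégalité $2p,2q\geqslant6$, qui écarte les valeurs $\alpha,\beta\in\{1,2\}$.
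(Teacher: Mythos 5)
Votre démonstration est correcte et suit exactement la voie que le texte indique par son « Cela résulte de tout ce qui précède » : implication directe par la commutativité de $N$ (qui contient $y_p^2$ et $x_q^2$ dès que $\Delta=0$), réciproque par transport de la proposition 11 et du corollaire 4 via permutation des indices, les cas exceptionnels étant écartés parce que le paramètre exceptionnel devient $\beta$ (resp.\ $\alpha$), associé à un ordre $2q$ (resp.\ $2p$) $\geqslant 6$, donc distinct de $3$ et de $4$. Vous avez en fait explicité les détails que le texte laisse au lecteur, sans vous en écarter.
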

\begin{remark}
On a des résultats et formules analogues pour les groupes $W(2p,q,2r)$ et $W(p,2q,2r)$.
\end{remark}
\begin{proof}
Cela résulte de tout ce qui précède.
\end{proof}
\end{document}